\crefname{defi}{definition}{definitions}
\Crefname{defi}{Definition}{Definitions}
\Crefname{theorem}{Theorem}{Theorems}
\newcommand{\N}{\mathcal{N}}
\newcommand{\Qq}{\mathcal{Q}}
\newcommand{\ob}{\text{Ob}}
\newcommand{\M}{\mathcal{M}}
\newcommand{\Nn}{\mathcal{N}}
\newcommand{\Cat}{\mathbf{Cat}}
\newcommand{\Set}{\mathbf{Set}}
\newcommand{\p}{\mathcal{P}}
\newcommand{\id}{\text{id}}
\newcommand{\la}{\langle}
\newcommand{\ra}{\rangle}
\DeclarePairedDelimiterX{\normb}[1]{\lVert}{\rVert}{#1}
\numberwithin{equation}{section}
\newtheorem{theorem}{Theorem}[section]
\newtheorem{lem}[theorem]{Lemma}
\newtheorem{cor}[theorem]{Corollary}
\let\origproofname\proofname
\renewcommand{\proofname}{\upshape\textbf{\origproofname}}
\theoremstyle{definition}
\newtheorem{ex}[theorem]{Example}
\newtheorem{defi}[theorem]{Definition}
\newtheorem{rmk}[theorem]{Remark}
\begin{document}

\title{A coherence theorem for pseudo symmetric multifunctors}

\author{Diego Manco}

\date{\today}

\address{Department of Mathematics, University  of Oregon,
       Eugene OR 97403-1222, USA.}

\email[]{dmancobe@uoregon.edu}

\subjclass[2020]{Primary 18M65, 19D23; Secondary 55P47, 55P43.}
\thanks{\textsc{Department of Mathematics, University of Oregon, Eugene OR 97403-1222, USA}
}

\maketitle
\begin{abstract}
In \cite{Y23} Yau defines the notion of pseudo symmetric $\Cat$-enriched multifunctor between $\mathbf{Cat}$-enriched multicategories and proves that Mandell's inverse $K$-theory multifunctor \cite{Mandell10} is pseudo symmetric. We prove a coherence theorem for pseudo symmetric $\mathbf{Cat}$-multifunctors.  As an application we prove that pseudo symmetric $\Cat$-multifunctors, and in particular Mandell's inverse $K$-theory, preserve $E_n$-algebras ($n=1,2,...,\infty$), at the cost of changing the parameterizing $E_n$-operad. 
\end{abstract}
\section{Introduction}
Permutative categories are symmetric monoidal categories that are strictly associative and unital. Let $\mathbf{Perm}$ be the category of permutative categories.  By a construction of May \cite{May74}, we can define algebraic $K$-theory as a functor from $\mathbf{Perm}$ to spectra. Elmendorf and Mandell \cite{EM06} introduced multicategories in homotopy theory to study the multiplicative properties of this functor. They gave $\mathbf{Perm}$ the structure of a multicategory and showed that the $K$-theory construction can be extended to a symmetric multifunctor landing in spectra. This implies that $K$-theory preserves certain multiplicative structures---for example, the $K$-theory of a bipermutative category is an $E_\infty$ ring spectrum.
\vspace*{-0.2cm}\\

Following work of Thomason \cite{T95}, Mandell \cite{Mandell10} introduced inverse $K$-theory $\p$, a functor from $\Gamma$-categories (modelling connective spectra) to $\mathbf{Perm}$ that provides a homotopy inverse to $K$-theory.  Elmendorf \cite{E21} and Johnson-Yau \cite{JY22} extended $\p$ to a $\mathbf{Cat}$-enriched multifunctor, but one that is not symmetric: it is not compatible with the permutation of elements in the domains of multicategory mapping spaces.  To account for this Yau \cite{Y23} introduced pseudo-symmetric multifunctors, where there is a compatibility only up to coherent natural isomorphisms, and he proved that Mandell's inverse $K$-theory multifunctor $\p$ is pseudo symmetric in his sense.
\vspace*{-0.2cm}\\

In this article we establish a 2-adjunction that lets us rigidify pseudo symmetric multifunctors and write them as symmetric multifunctors at the cost of fattening up their domain in a specific way. As an application we get a new result in multiplicative $K$-theory: pseudo symmetric multifunctors, and in particular Mandell's inverse $K$-theory,  preserve $E_n$-algebras for $n=1,2,\dots ,\infty$ at the cost of changing the parameterizing $E_n$ operad. For example, they send commutative monoids to $E_\infty$ algebras.\\

Let us go back and provide more details of the above panorama. Segal's infinite loop space machine \cite{Segal74} allows the construction of spectra from symmetric monoidal categories. May's construction \cite{May74} provides an alternative way of building spectra from permutative categories. Both $K$-theory constructions turn out to be equivalent \cite{MT78}, with $\mathbf{Perm}$ being equivalent to the category of symmetric monoidal categories by a theorem of Isbell \cite{I69}. The question about what kind of structure to impose on a permutative category so that its $K$-theory is an $E_\infty$ ring spectrum was answered independently by Elmendorf and Mandell \cite{EM06} and May \cite{May09}, the former using the theory of multicategories. To study multiplicative $K$-theory, one would like the domain of the $K$-theory construction $\mathbf{Perm}$ to have a symmetric monoidal structure and $K$-theory to be a monoidal functor. That way, $K$-theory would preserve multiplicative structures in $\mathbf{Perm}$. However, $\mathbf{Perm}$ lacks a natural symmetric monoidal structure, although it admits one in a 2-categorical sense \cite{GJO22}.
\vspace*{-0.2cm}\\

Multicategories, also known as colored operads, generalize symmetric monoidal categories by supplying a setup for working with multi-input maps, thus providing an alternative way of encoding multiplicative structures even in the absence of symmetric monoidal structures. In a sense, multicategories allow us to talk about multilinear maps without making any reference to tensor products. Multiplicative structures can then be encoded in a multicategory via the actions of operads and similar gadgets. Elmendorf-Mandell \cite{EM06} gave $\mathbf{Perm}$ the structure of a multicategory and extended algebraic $K$-theory to a symmetric multifunctor landing in symmetric spectra. This implies that $K$-theory preserves multiplicative structures. This is how they proved that the $K$-theory of a bipermutative category is an $E_\infty$ ring spectrum. Multiplicative $K$-theory has also been defined as a symmetric multifunctor from the multicategory of Waldhausen categories $\mathbf{Wald}$ to spectra \cite{BM11}, with $\mathbf{Wald}$ providing another example of a multicategory that doesn't arise from a symmetric monoidal structure \cite{Z18}.\\ 

Spectra arising from the Segal-May construction are all connective, and by a theorem of Thomason \cite{T95} the $K$-theory construction is surjective on homotopy types. Mandell's inverse $K$-theory functor $\p\colon \Gamma\mbox{-}\Cat\to \mathbf{Perm}$ witnesses this by providing a homotopy stable inverse to $K$-theory. Here $\Gamma$-categories model connective spectra by \cite{T80,C99,BF78}. Elmendorf \cite{E21} and Johnson-Yau \cite{JY22} prove independently that Mandell's inverse $K$-theory functor can be extended to a $\Cat$-enriched multifunctor $\p\colon \Gamma\mbox{-}\Cat\to\mathbf{Perm}$ between $\Cat$-enriched multicategories. However, $\p$ turns out to not be symmetric \cite{JY22}, i.e., it doesn't preserve the action of the symmetric group on the hom objects of the multicategories by permutation of inputs.  So their results can only be used to prove that $\p$ preserves multiplicative structures that don't involve symmetry, like associative monoids \cite{JY22}. This obstruction led Yau \cite{Y23} to define pseudo symmetric multifunctors. These are non-symmetric $\Cat$-enriched multifunctors that preserve the action of the symmetric group of multicategory mapping spaces only up to coherent natural isomorphisms. One of the main results of \cite{Y23} is that $\p$ is in fact pseudo symmetric.\\

Our main result can be interpreted as a coherence result for pseudo symmetric multifunctors. If $F\colon \M\to\Nn$ is a pseudo symmetric multifunctor between $\Cat$-enriched multicategories, we prove that the natural isomorphisms attesting the pseudo symmetry of $F$ assemble together to give a symmetric $\Cat$-enriched multifunctor $\phi(F)\colon \M\times E\Sigma_*\to\Nn$ satisfying a universal property, where $E\Sigma_*$ is the categorical Barratt-Eccles operad defined in \Cref{Barrat}. We can also think about our result as a rigidification result. We can rigidify $F$ and turn it into a symmetric $\Cat$-enriched multifunctor $\phi(F),$ at the cost of changing its domain.
 
\begin{theorem}\label{mainresult}\normalfont{(\Cref{mainprop})} Let $\mathcal{M}$ be a $\textbf{Cat}$-enriched multicategory. There is a pseudo symmetric multifunctor $\eta_\M\colon \mathcal{M}\rightarrow \mathcal{M}\times E\Sigma_*$ such that for every $\textbf{Cat}$-enriched multicategory $\mathcal{N}$ and each pseudo symmetric multifunctor $F\colon \mathcal{M}\rightarrow \mathcal{N},$ there exists a unique symmetric $\Cat$-enriched multifunctor $\phi(F)\colon \mathcal{M}\times E\Sigma_*\rightarrow \mathcal{N}$ such that the following diagram commutes: 
\begin{center}    
\begin{tikzcd}
        &\M\times E\Sigma_*\ar[rd,"\phi(F)"]&\\
        \M\ar[rr,"F"swap]\ar[ru,"\eta_\M"]&&\Nn.\\
    \end{tikzcd}
\end{center} That is, $F=\phi(F)\circ \eta_\M$ as pseudo symmetric multifunctors.
\end{theorem}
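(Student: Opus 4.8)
The plan is to construct $\phi(F)$ explicitly on objects, multimorphism categories, and to verify it is a symmetric multifunctor, then show uniqueness. First I would note that $\M\times E\Sigma_*$ has the same objects as $\M$, since $E\Sigma_*$ is an operad (one object), so on objects we are forced to set $\phi(F)$ to agree with $F$, which also pins down $\eta_\M$ as the identity on objects. For a multimorphism category, $(\M\times E\Sigma_*)\bigl(\langle a_i\rangle_{i=1}^n; b\bigr) = \M(\langle a_i\rangle; b) \times E\Sigma_n$, where $E\Sigma_n$ is the translation category of $\Sigma_n$ (objects are permutations, a unique isomorphism between any two). On objects of this product category, a pair $(f, \sigma)$ with $f$ an $n$-ary multimorphism of $\M$ and $\sigma\in\Sigma_n$, I would define $\phi(F)(f,\sigma) := F(\sigma^{-1}f)\cdot(\text{appropriate })$ — more precisely, I would use the pseudo-symmetry isomorphisms $F^\sigma$ of $F$ to set $\phi(F)(f,\sigma)$ to be $F(f)$ re-indexed by $\sigma$, i.e. the value $\sigma F(f)$ in $\Nn$, and on a morphism $(g,\tau_2\tau_1^{-1})\colon (f_1,\tau_1)\to(f_2,\tau_2)$ in the product (where $g\colon f_1\to f_2$ in $\M$) I would send it to the composite of $F(g)$ with the pseudo-symmetry natural isomorphism component comparing the $\tau_1$- and $\tau_2$-reindexings. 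The coherence axioms in Yau's definition of pseudo symmetric multifunctor (compatibility of the $F^\sigma$ with composition, units, and the group multiplication $F^{\tau\sigma} = F^\tau\circ(\tau\text{-reindex of }F^\sigma)$, plus naturality) are exactly what is needed to check that this assignment is (a) functorial on each multimorphism category, (b) compatible with the multicategorical composition of $\M\times E\Sigma_*$, (c) unit-preserving, and (d) genuinely symmetric — the last point uses that the $\Sigma_n$-action on $\M(\langle a_i\rangle;b)\times E\Sigma_n$ is diagonal and that reindexing an object $\sigma$ of $E\Sigma_n$ by $\rho$ gives $\rho\sigma$, so $\phi(F)$ sends $\rho\cdot(f,\sigma)$ to $\rho\cdot\phi(F)(f,\sigma)$ on the nose.

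Next I would define $\eta_\M\colon\M\to\M\times E\Sigma_*$ to be the identity on objects and, on multimorphism categories, the functor $f\mapsto (f,\id)$ sending a morphism $g\colon f_1\to f_2$ to $(g,\id)$; its pseudo-symmetry isomorphisms $(\eta_\M)^\sigma$ are the canonical isomorphisms $(\sigma f, \id)\to(\sigma f, \sigma)$ in the product category coming from the unique morphism $\id\to\sigma$ in $E\Sigma_n$. One checks these satisfy Yau's coherence axioms — here this is essentially automatic because every diagram in $E\Sigma_n$ commutes. Then $\phi(F)\circ\eta_\M$ sends $f$ to $\phi(F)(f,\id) = F(f)$ and, tracking the pseudo-symmetry data, sends the composite isomorphism built from $(\eta_\M)^\sigma$ and $\phi(F)$ to exactly $F^\sigma$; so the triangle commutes as pseudo symmetric multifunctors, which is the claimed identity $F = \phi(F)\circ\eta_\M$.

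For uniqueness: suppose $G\colon \M\times E\Sigma_*\to\Nn$ is a symmetric multifunctor with $G\circ\eta_\M = F$ as pseudo symmetric multifunctors. On objects $G$ is forced. On a multimorphism category, any object $(f,\sigma)$ equals $\sigma\cdot(\sigma^{-1}f,\id) = \sigma\cdot\eta_\M(\sigma^{-1}f)$ after applying $\sigma$-reindexing to the first coordinate; wait — more carefully, $(f,\sigma)$ is obtained from $(\sigma^{-1}\cdot f,\ \id)$... I would instead use that $(f,\sigma)$ lies in the $\Sigma_n$-orbit of the image of $\eta_\M$, namely $(f,\sigma) = \sigma\cdot(f',\id)$ for the appropriate $f'$ determined by the diagonal action, forcing $G(f,\sigma) = \sigma\cdot G(f',\id) = \sigma\cdot F(f')$ by symmetry of $G$. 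Every morphism of the product category factors as an identity-component morphism (handled by $G\circ\eta_\M=F$) composed with the image under $\Sigma_n$-reindexing of such; and the requirement that $G\circ\eta_\M=F$ \emph{as pseudo symmetric multifunctors} forces $G$'s behavior on the morphisms of the form $(\id_f,\tau)$ to agree with $F^\tau$. So $G=\phi(F)$.

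The main obstacle I expect is bookkeeping the pseudo-symmetry coherence axioms of \cite{Y23} correctly — in particular verifying that the assignment on the product multimorphism categories respects multicategorical composition, since composition in $\M\times E\Sigma_*$ interleaves the operadic composition of permutations in $E\Sigma_*$ (block sums and reindexing) with composition in $\M$, and one must match this against Yau's composition-compatibility axiom for the family $\{F^\sigma\}$, which involves precisely the same block-permutation combinatorics. Everything else (objects, units, naturality, the triangle identity, uniqueness) is essentially formal once the definitions of $E\Sigma_*$ from \Cref{Barrat} and of pseudo symmetric multifunctor are unwound; the functoriality on a single multimorphism category is easy because $E\Sigma_n$ is a contractible groupoid, so the real content is the compatibility with the multicategory structure maps.
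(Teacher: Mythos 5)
Your proposal follows essentially the same route as the paper: define $\eta_\M$ by $f\mapsto(f,\id_n)$ with pseudo symmetry constraints given by the unique morphisms of $E\Sigma_n$, build $\phi(F)$ explicitly using the fact that every object $(f,\sigma)$ of $\M(\la a\ra;b)\times E\Sigma_n$ lies in the $\Sigma_n$-orbit of the image of $\eta_\M$ and every morphism factors through $\M$-direction and $E\Sigma_n$-direction pieces, check functoriality via naturality and the product permutation axiom, check preservation of $\gamma$ via Yau's top and bottom equivariance axioms (which is indeed where the real work lies, exactly as in the paper), and get uniqueness from symmetry of $\phi(F)$ together with the triangle holding as pseudo symmetric multifunctors.

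One local correction: your ``more precise'' object formula, $\phi(F)(f,\sigma)=\sigma F(f)$ (i.e.\ just acting on $F(f)$ by $\sigma$), is not $\Sigma_n$-equivariant, since $(f,\sigma)\tau=(f\tau,\sigma\tau)$ would require $F(f\tau)=F(f)$, which fails precisely because $F$ is only pseudo symmetric. The correct value is the one your own uniqueness computation forces, namely $\phi(F)(f,\sigma)=F(f\sigma^{-1})\sigma$ (first untwist $f$, then act), and likewise $\phi(F)(1_f,E_\sigma^\tau)=(F_{\tau\sigma^{-1};f\tau^{-1}})\sigma$; with these formulas the rest of your outline goes through and matches the paper's proof.
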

Thus, if $\mathcal{O}$ is an operad in $\Cat,$ pseudo symmetric algebras in a $\Cat$-enriched multicategory $\M$ over $\mathcal{O},$ i.e., pseudo symmetric multifunctors $\mathcal{O}\to \M,$ are symmetric algebras in $\M$ over $\mathcal{O}\times E\Sigma_*.$ The following result, which appears as \Cref{finalcor}, holds  since multiplying by $E\Sigma_*$ sends the commutative operad $\{*\}$ to the $E_\infty$ operad $E\Sigma_*$ and $E_n$ operads in $\Cat,$ like the ones defined in \cite{Berger96} and \cite{Fiedorowicz03}, to $E_n$ operads.
\begin{cor}\label{maincor}\normalfont{(\Cref{finalcor})} Let $F\colon \M\to\N$ be $\Cat$-enriched pseudo symmetric multifunctor. Then,
\begin{enumerate} 
\item  $F$ sends commutative monoids to $E_\infty$ algebras.
\item $F$ sends $E_n$ algebras to $E_n$ algebras for  $n=1,2,\dots,\infty.$
\end{enumerate}
\end{cor}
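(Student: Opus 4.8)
The plan is to obtain both parts as consequences of \Cref{mainresult}, by viewing algebras as multifunctors out of operads and then tracking what happens to the parameterizing operad. Recall that for an operad $\mathcal{O}$ in $\Cat$, an $\mathcal{O}$-algebra in a $\Cat$-multicategory $\M$ is exactly a symmetric $\Cat$-multifunctor $\mathcal{O}\to\M$ (the unique object going to the underlying object of the algebra, and $\mathcal{O}(n)$ to the appropriate multimorphism category), and a \emph{pseudo symmetric} $\mathcal{O}$-algebra is a pseudo symmetric multifunctor $\mathcal{O}\to\M$. A commutative monoid in $\M$ is an algebra over the commutative operad $\{*\}$ (terminal category in each arity), and an $E_n$ algebra is an algebra over a chosen $E_n$ operad in $\Cat$, for instance one of those in \cite{Berger96,Fiedorowicz03}.

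First I would record the general principle. If $\mathcal{O}$ is an operad in $\Cat$ and $A\colon\mathcal{O}\to\M$ is an $\mathcal{O}$-algebra, then $F\circ A\colon\mathcal{O}\to\N$ is a pseudo symmetric multifunctor, so by \Cref{mainresult} applied with $\mathcal{O}$ in place of $\M$ it factors uniquely as $\phi(F\circ A)\circ\eta_{\mathcal{O}}$, where $\phi(F\circ A)\colon\mathcal{O}\times E\Sigma_*\to\N$ is a genuine symmetric $\Cat$-multifunctor; that is, $F$ carries the $\mathcal{O}$-algebra $A$ to the $(\mathcal{O}\times E\Sigma_*)$-algebra $\phi(F\circ A)$ in $\N$, whose underlying object is $F$ applied to that of $A$. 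It then remains only to identify the operad $\mathcal{O}\times E\Sigma_*$ in each case. For part (1), with $\mathcal{O}=\{*\}$ one has $(\{*\}\times E\Sigma_*)(n)=\{*\}(n)\times E\Sigma_n=E\Sigma_n$, with diagonal $\Sigma_n$-action and operad structure those of $E\Sigma_*$, so $\{*\}\times E\Sigma_*=E\Sigma_*$, which is an $E_\infty$ operad (\Cref{Barrat}); hence $\phi(F\circ A)$ is an $E_\infty$ algebra. For part (2), with $\mathcal{O}$ an $E_n$ operad, the $\Sigma_n$-action on $(\mathcal{O}\times E\Sigma_*)(n)=\mathcal{O}(n)\times E\Sigma_n$ is still free (the $E\Sigma_n$ factor alone is free on objects), and since the classifying space of the chaotic groupoid $E\Sigma_n$ is contractible the projection $\mathcal{O}(n)\times E\Sigma_n\to\mathcal{O}(n)$ induces a weak equivalence on classifying spaces; so $\mathcal{O}\times E\Sigma_*$ retains the $\Sigma$-freeness and the homotopy types that make $\mathcal{O}$ an $E_n$ operad, hence is again an $E_n$ operad and $\phi(F\circ A)$ is an $E_n$ algebra.

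I expect the one delicate point to be part (2): one must verify that multiplying by $E\Sigma_*$ genuinely lands back inside whichever formalization of an $E_n$ operad in $\Cat$ one adopts, i.e.\ that preservation of $\Sigma$-freeness together with invariance up to weak equivalence of each $B\mathcal{O}(n)$ (which is all that $BE\Sigma_n\simeq *$ provides) is enough to conclude that the product is $E_n$. For the models of \cite{Berger96,Fiedorowicz03} this reduces to the standard fact that the product of an $E_n$ operad with a contractible $\Sigma$-free operad is again $E_n$. I would also stress in the write-up that the $E_n$ operad obtained this way is in general not the one parameterizing $A$, which is why the statement allows the parameterizing $E_n$ operad to change, and that for $n=\infty$ part (2) subsumes part (1) because $E\Sigma_*$ is itself $E_\infty$ and $E\Sigma_*\times E\Sigma_*$ remains $E_\infty$.
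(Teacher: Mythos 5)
Your proposal is correct and follows essentially the same route as the paper: compose $F$ with the parameterizing algebra to get a pseudo symmetric multifunctor out of the operad, apply \Cref{mainresult} to rigidify it into a symmetric algebra over $\mathcal{O}\times E\Sigma_*$, and then identify $\{*\}\times E\Sigma_*=E\Sigma_*$ for (1) and use $\Sigma$-freeness plus contractibility of $E\Sigma_n$ to see that $\mathcal{O}\times E\Sigma_*$ remains $E_n$ for (2), exactly as in the paper's lemma preceding \Cref{finalcor}. The only caveat you should make explicit, as the paper does, is that the projection $\mathcal{O}(n)\times E\Sigma_n\to\mathcal{O}(n)$ must be a $\Sigma_n$-\emph{equivariant} homotopy equivalence on nerves (not merely a nonequivariant weak equivalence), which is where the freeness hypothesis is actually used.
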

This corollary extends our understanding of the behavior of inverse $K$-theory since it implies that the inverse $K$-theory multifunctor $\p$, which is pseudo symmetric by work of Yau \cite{Y23},  sends commutative monoids to $E_\infty$-algebras and preserves $E_n$ algebras ($n=1,2,\dots$). Since $\p$ provides a stable inverse to $K$-theory, and $K$-theory is a symmetric multifunctor, this implies that every $E_n$-algebra in $\Gamma$-categories is stably equivalent to the $K$-theory of an $E_n$ algebra in permutative categories. This shows how \Cref{mainresult} can be used to grasp the behavior of pseudo symmetric multifunctors on structures parameterized by symmetric operads in general. \\

In \cite{Y23} Yau defines the $2$-category $\mathbf{Cat}$-$\mathbf{Multicat}$ having $\Cat$-enriched multicategories as $0$-cells, symmetric multifunctors as 1-cells  and multinatural transformations as 2-cells. He also defines the $2$-category $\mathbf{Cat}$-$\mathbf{Multicat^{ps}}$ with 0-cells  $\Cat$-enriched multicategories, 1-cells pseudo symmetric multifunctors, and 2-cells pseudo symmetric $\Cat$-multinatural transformations. Every symmetric $\Cat$-enriched multifunctor (respectively multinatural transformation) is canonically a pseudo symmetric multifunctor (respectively multinatural transformation), so there is a 2-functorial inclusion $j\colon \mathbf{Cat}$-$\mathbf{Multicat}\to\mathbf{Cat}$-$\mathbf{Multicat^{ps}}.$ Taking into account these 2-categorical structures we can improve our previous result by providing a left adjoint $\psi$ to $j,$ which, at the 0-cell level, sends a multicategory $\M$ to $\psi(\M)=\M\times E\Sigma_*.$ 
\begin{theorem}\label{adjunction}\normalfont{(\Cref{phi} and \Cref{finaladjunction})}
    The inclusion $j\colon\mathbf{Cat}$-$\mathbf{Multicat}\to\mathbf{Cat}$-$\mathbf{Multicat^{ps}}$ admits a left 2-adjoint $\psi\colon\mathbf{Cat}$-$\mathbf{Multicat^{ps}}\to\mathbf{Cat}$-$\mathbf{Multicat}$ with $\psi(\M)=\M\times E\Sigma_*$ for $\M$ a $\Cat$-multicategory. In particular, for $\Cat$-multicategories $\M$ and $\N$ we have an isomorphism of categories
    $$\mathbf{Cat}\mbox{-}\mathbf{Multicat^{ps}}(\mathcal{M},\mathcal{N})\cong \mathbf{Cat}\mbox{-}\mathbf{Multicat}(\mathcal{M}\times E\Sigma_*,\mathcal{N}).$$
\end{theorem}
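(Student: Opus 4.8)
The plan is to reduce the $2$-adjunction to the $0$-cell universal property of \Cref{mainresult} together with formal $2$-categorical bookkeeping. Recall the standard recognition principle: a strict $2$-functor $j$ is a right $2$-adjoint exactly when one can exhibit, for every $0$-cell $\M$, a $1$-cell $\eta_\M\colon\M\to j\psi(\M)$ (here $\psi(\M):=\M\times E\Sigma_*$) such that for all $\M,\N$ the precomposition functor
\[
(-)\circ\eta_\M\colon\ \mathbf{Cat}\mbox{-}\mathbf{Multicat}(\M\times E\Sigma_*,\N)\ \longrightarrow\ \mathbf{Cat}\mbox{-}\mathbf{Multicat^{ps}}(\M,\N)
\]
is an \emph{isomorphism of categories}, natural in $\N$. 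Given this, the value of $\psi$ on $1$- and $2$-cells, the strict $2$-functoriality of $\psi$, the $2$-naturality of $\eta$, and the counit together with the triangle identities are all forced by the uniqueness built into these isomorphisms. So the substantive task is to prove that $(-)\circ\eta_\M$ is a bijection on objects and, over each pair of objects, a bijection on morphisms, compatibly.

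The bijection on objects is exactly \Cref{mainresult}: the objects of the target are the pseudo symmetric multifunctors $F\colon\M\to\N$, and $\phi(F)$ is the unique symmetric multifunctor with $\phi(F)\circ\eta_\M=F$, so $\phi$ is a two-sided inverse to $(-)\circ\eta_\M$ on objects. For the bijection on morphisms, fix symmetric multifunctors $G,G'\colon\M\times E\Sigma_*\to\N$ and put $F:=G\circ\eta_\M$, $F':=G'\circ\eta_\M$; I must show that every pseudo symmetric multinatural transformation $\alpha\colon F\Rightarrow F'$ equals the whiskering $\beta\ast\eta_\M$ of a unique multinatural transformation $\beta\colon G\Rightarrow G'$. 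Since $\M$ and $\M\times E\Sigma_*$ have the same objects and $\eta_\M$ is the identity on objects, the components are forced, $\beta_m:=\alpha_m$, and the only issue is multinaturality of $\beta$ against \emph{every} multimorphism of $\M\times E\Sigma_*$. A multimorphism of $\M\times E\Sigma_*$ of arity $n$ is a multimorphism of $\M$ together with a datum in the chaotic category $E\Sigma_n$; up to the canonical isomorphisms of $E\Sigma_n$ it is a $\Sigma_n$-reindexing of an element in the image of $\eta_\M$. Unwinding the multinaturality square of $\beta$ at such a multimorphism gives the $\alpha$-naturality square on the $\eta_\M$-part, and on the symmetry part it reproduces precisely the axiom that relates $\alpha$ to the pseudo-symmetry isomorphisms of $F$ and $F'$ in Yau's definition of a pseudo symmetric multinatural transformation. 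Read backwards, the same computation shows that whiskering an arbitrary multinatural transformation $\beta$ by $\eta_\M$ does land in the pseudo symmetric ones, and functoriality of $(-)\circ\eta_\M$ as well as naturality in $\N$ are immediate from associativity of composition.

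With the isomorphism of hom-categories established, I assemble $\psi$: on a $1$-cell $F\colon\M\to\N$ set $\psi(F):=\phi(\eta_\N\circ F)$, the unique symmetric multifunctor with $\psi(F)\circ\eta_\M=\eta_\N\circ F$; on a $2$-cell $\alpha$ let $\psi(\alpha)$ be the unique multinatural transformation with $\psi(\alpha)\ast\eta_\M=\eta_\N\ast\alpha$, which exists by the morphism bijection. Preservation of identities, of horizontal and vertical composites, and of the interchange law, as well as $2$-naturality of $\eta$ and the triangle identities, then follow from the uniqueness clauses, yielding the desired $2$-adjunction $\psi\dashv j$. I expect the morphism bijection to be the main obstacle: it requires unwinding Yau's definition of a pseudo symmetric multinatural transformation and the explicit multicategory structure on $\M\times E\Sigma_*$ from \Cref{Barrat}, and checking that the coherence data carried by $\alpha$ is exactly---and exactly strictly enough---what is needed to extend $\alpha$ uniquely over the symmetries that $E\Sigma_*$ adjoins, so that $\psi$ emerges as a strict, rather than merely pseudo, $2$-functor.
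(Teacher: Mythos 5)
Your proposal is correct, and its mathematical core coincides with the paper's: the isomorphism of hom-categories is obtained exactly as in \Cref{mainprop} (objects: $\phi(F)$ is the unique symmetric multifunctor with $\phi(F)\circ\eta_\M=F$) together with the analogue of \Cref{2celllemma} (morphisms: components are forced because $\eta_\M$ is bijective on objects, and multinaturality over $\M\times E\Sigma_*$ decomposes into ordinary multinaturality at morphisms of the form $(\alpha,1_\sigma)$ and, at the morphisms $(1_f,E_\sigma^\tau)$, precisely Yau's extra axiom for pseudo symmetric multinatural transformations, using top and bottom equivariance in $\N$ to strip off the $\sigma$-reindexing). Where you genuinely diverge is in how the $2$-adjunction is assembled from these hom-isomorphisms. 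The paper proceeds concretely: it defines $\psi$ on $1$- and $2$-cells via $\phi\circ(\eta_\N)_*$, introduces the counit as the projections $\pi_\M\colon\M\times E\Sigma_*\to\M$, and then verifies by hand the strict $2$-naturality of $\eta$ and $\pi$ and both triangle identities, which requires several explicit identifications such as $\psi jF=F\times 1_{E\Sigma_*}$, $\psi j\theta=j(\theta\times 1)$, and $\psi(\eta_\M)=1_\M\times\Delta$, each checked at the level of pseudo symmetry isomorphism components. You instead invoke the enriched recognition principle (pointwise representations $\mathbf{Cat}\mbox{-}\mathbf{Multicat}(\M\times E\Sigma_*,\N)\cong\mathbf{Cat}\mbox{-}\mathbf{Multicat^{ps}}(\M,j\N)$, $\Cat$-natural in $\N$, assemble into a strict $2$-adjoint), so the $2$-functoriality of $\psi$, the counit, and the triangle identities come for free from uniqueness. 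This is legitimate: since your isomorphisms are given by precomposition with a fixed $1$-cell $\eta_\M$, the required strict $2$-naturality in $\N$ is automatic from associativity of composition and strictness of $j$, so the Kelly-style representability argument applies and you get a strict, not merely pseudo, $2$-adjunction. The trade-off is that your route is shorter and more conceptual but leaves the explicit formulas for $\psi$ on $1$- and $2$-cells (and the identification of the counit with the projection) implicit, whereas the paper's explicit verification produces exactly those formulas, which it then reuses to establish the compact description of $\Cat\mbox{-}\mathbf{Multicat^{ps}}$ as the $2$-category $\mathbf{D}$ in \Cref{2D}; if you follow your route you would have to extract those identifications separately before proving that last result.
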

An important consequence of this theorem is that we can give a very simple and compact description of the 2-category $\Cat\mbox{-}\mathbf{Multicat^{ps}}$ solely in terms of symmetric $\Cat$-multifunctors and $\Cat$-mutinatural transformations, which we do in \Cref{2D}.\\

Bohmann and Osorno \cite{BO16} make use of a spectrally enriched version of the Elmendorf-Mandell construction together with the description of equivariant spectra in terms of presheaves of spectra due to Guillou and May \cite{GM11} to define an equivariant infinite loop space machine. Since preservation of multiplicative structures is one of the main ingredients in the construction of this equivariant machine \cite{BO16}, our results can also be regarded as a step towards proving the conjecture that every connective equivariant spectrum, i.e., those whose fixed point spectra are connective, arises from Bohmann and Osorno's construction. The infinite equivariant loop space machine $\mathbb{K}_G$ from \cite{GMMO21} is also suspected to be pseudo symmetric, so our result might help understand the preservation of multiplicative structures in the equivariant context as well.\\

\textbf{Outline.} In  \Cref{Section2}  we recall the definition of the 2-categories $\Cat\mbox{-}\textbf{Multicat}$ and $\Cat\mbox{-}\mathbf{Multicat^{ps}}.$ In \Cref{Section3} we prove \Cref{mainresult,adjunction}. We also extract a new and compact description of the 2-category $\Cat\mbox{-}\mathbf{Multicat^{ps}}$. In \Cref{Section4} we obtain the desired consequences for Mandell's inverse $K$-theory functor  $\p$  included in \Cref{maincor}.\\

\textbf{Acknowledgements.} The author would like to thank Angélica Osorno, without whom this article wouldn't exist, for her mentorship and guidance during all stages of this project. The author would also like to thank Dan Dugger for providing helpful advice and feedback as well as Donald Yau for his helpful comments, questions, email exchanges, and encouragement. The author was supported by the Fulbright-COLCIENCIAS scholarship awarded by the Fulbright Colombia Commission and COLCIENCIAS, now a part of the Ministry of Science, Technology, and Innovation of the Colombian government. The author was also supported by the University of Oregon through the Anderson Mathematics PhD Student Research Award.

\section{Symmetric and pseudo symmetric Multifunctors}\label{Section2}
We begin by reviewing the definition of multicategory enriched in a symmetric monoidal category. In the following definition $(C,1,\oplus,\lambda,\rho,\xi)$ is a symmetric monoidal category with $\oplus\colon C\times C\to C$ the monoidal product, 1 the monoidal unit, $\lambda$ the left unit isomorphism, $\rho$ the right unit isomorphism and $\xi$ the symmetry. In this paper we will consider only categories enriched over $\Cat$ with the monoidal structure given by products, but we use a general monoidal category in the definition to make explicit the fact that this definition doesn't make use of the 2-categorical structure of $\Cat$. 

\begin{rmk}\label{gam} We will also use the following notation: if $\sigma \in \Sigma_n$ and $\tau_i\in \Sigma_{k_{i}}$ for $1\leq i \leq n,$ $\sigma\langle \tau_1,\dots, \tau_n\rangle\in \Sigma_{k_1+\cdots+k_n}$ is the permutation that permutes $n$ blocks of lengths $k_1,\dots, k_n$ according to $\sigma$ and each block of length $k_i$ according to $\tau_i.$  
\end{rmk}
\begin{defi}\label{multicat}
    If $C$ is a symmetric monoidal category, a $C$-\textit{multicategory} $(\M,\gamma,1)$ consists of the following data.
    \begin{itemize}
        \item A class of objects $\ob(\M).$
        \item For every $n\geq 0,$ $\la a\ra=\la a_i\ra_{i=1}^n\in\ob(\M)^n$  and $b\in \ob(\M),$ an object in $C$ denoted by
        $$\M(\la a\ra;b)=\M(a_1,\dots,a_n;b).$$
        We will write $\la a\ra$ instead of $\la a_i\ra_{i=1}^n$ when $n$ is clear from the context or irrelevant. [In the case $C=\Cat,$ an object $f$ of $\M(\la a\ra;b)$ will be called an $n$-ary 1-cell with input $\la a\rangle$ and output $b$ and will be denoted as $f\colon \la a \ra \to b.$ Similarly, we will call $\alpha\colon f\to g$ in $\M(\la a\ra;b)(f,g)$ an $n$-ary 2-cell.]
        \item For each $n\geq 0,$  $\la a\ra\in\ob(\M)^n,$ $b\in\ob(\M),$ and $\sigma\in \Sigma_n,$ a $C$-isomorphism
        \begin{center}
            \begin{tikzcd}
                \M(\la a\ra;b)\ar[r,"\sigma","\cong"swap]&\M(\la a\ra \sigma;b)
            \end{tikzcd}
        \end{center}
        called the right $\sigma$ action or the symmetric group action. Here
        $$\la a\ra\sigma=\la a_1,\dots,a_n\ra \sigma=\la a_{\sigma(1)},\dots,a_{\sigma(n)}\ra.$$ [In the case $C=\Cat$ we write $f\sigma$ for the image of an $n$-ary 1-cell $f\colon \la a\ra \to b$ in $\M$ and similarly for 2-cells.]
        \item For each object $a\in\ob(\M),$ a morphism
        \begin{center}
            \begin{tikzcd}
                1\ar[r,"1_a"]&\M(a;a)
            \end{tikzcd}
        \end{center}
        called the $a$-unit. In the case $C=\Cat$ we notice that  if $a\in\ob(\M),$ $1_a\colon a\to a$ is a 1-ary 1-cell while if $f\colon \la a\ra\to b$ is an $n$-ary 1-cell, then $1_f\colon f\to f$ is an $n$-ary 2-cell in $\M(\la a\ra;b)(f,f)$ so our notation is unambiguous.
        \item For every $c\in\ob(\M),$ $n\geq 0,$ $\la b \ra =\la b_j\ra_{j=1}^n\in \ob(M)^n,$ $k_j\geq 0$ for $1\leq j\leq n,$   and $\la a_j\ra=\la a_{j,i}\ra_{i=1}^{k_j}\in \ob(\M)^{k_j}$ for $1\leq j \leq n,$   a morphism in $C,$
        \begin{center}
            \begin{tikzcd}
                \M(\la b\ra;c)\otimes \bigotimes\limits_{j=1}^n\M(\la a_j\ra;b_j)\ar[r,"\gamma"]&\M(\la a\ra;c),
            \end{tikzcd}
        \end{center}
        where we adopt the convention that $\la a\ra \in \ob(\M)^k,$ where $k=\sum_{i=1}^n k_j,$ denotes the concatenation of the varying $a_j$'s for $j=1,\dots,n$. We write this as
        $$\la a\ra =\la a_1,\dots,a_n\ra=\la\la a_j\ra\ra_{j=1}^n= \la a_{1,1},\dots, a_{1,k_1},a_{2,1,}\dots,a_{n-1,k_{n-1}}a_{n,1},\dots,a_{n,k_n}\ra.$$
    \end{itemize}
    The previous data are required to satisfy the following axioms.
    \begin{itemize}
        \item \textbf{Symmetric group action}: For every $n\geq 0,$ $\la a\ra\in \ob(\M),$  $b\in\ob(\M),$ and $\sigma,\tau$ in  $\Sigma_n$ the following diagram commutes in $C:$
        \begin{center}
            \begin{tikzcd}
                \M(\la a\ra;b)\ar[r,"\sigma"]\ar[rd,"\sigma\tau"swap]&\M(\la a\ra\sigma;b)\ar[d,"\tau"]\\
                &\M(\la a\ra\sigma\tau;b).
            \end{tikzcd}
        \end{center}
        We also require the identity permutation $\id_n\in\Sigma_n$ to act as the identity morphism on $\M(\la a\ra;b).$
        
        \item \textbf{Associativity:} For every $d\in\ob(\M),$ $  n\geq 1,$ $\la c\ra=\la c_j\ra_{j=1}^n\in\ob(\M)^n,$ $k_j\geq 0$ for $1\leq j\leq n$ with $k_j\geq 1$ for at least one $j,$ $\la b_j\ra=\la b_{j,i}\ra_{i=1}^{k_j}\in\ob(\M)^{k_j}$ for $1\leq j\leq n,$ $l_{i,j}\geq 0$ for $1\leq j\leq n $ and $1\leq i\leq k_j,$   and $\la a_{j,i}\ra=\langle a_{j,i,p}\rangle_{p=1}^{l_{i,j}}\in\ob(\M)^{l_{i,j}}$ for $1\leq j \leq n$ and $1\leq i\leq k_j,$ the following \textit{associativity diagram} commutes in $C$:
       
        \begin{equation}\label{gamma}
            \begin{tikzcd}[font=\small]
                \M(\la c\ra;d)\otimes\left(\bigotimes\limits_{j=1}^n\M(\langle b_j\rangle;c_j)\right)\otimes \bigotimes\limits_{j=1}^n\left(\bigotimes\limits_{i=1}^{k_j}\M(\langle a_{j,i}\rangle;b_{j,i})\right)\ar[rd,"\gamma\otimes 1"]\ar[dd,"\cong"]&[-95pt]\\
                &\M(\langle b\rangle;c)\otimes \bigotimes\limits_{j=1}^n\left(\bigotimes\limits_{i=1}^{k_j}\M(\langle a_{j,i}\rangle);b_{j,i}\right)\ar[dd,"\gamma"]\\
                \M(\la c\ra;d)\otimes \bigotimes\limits_{j=1}^n\left(\M(\langle b_j\rangle;c_j)\otimes \bigotimes\limits_{i=1}^{k_j}\M(\langle a_{j,i}\rangle;b_{j,i})\right)\ar[d,"1 \otimes\bigotimes_{j=1}^n\gamma"swap]&\\
                \M(\langle c\rangle;d)\otimes\bigotimes\limits_{j=1}^n\M(\la a_j\ra;c_j)\ar[r,"\gamma"swap]&\M(\langle a\rangle;b).
            \end{tikzcd}
        \end{equation}
        
        \item \textbf{Unity:} Suppose $b\in \ob(\M)$ and $\la a\ra=\la a_j\ra_{j=1}^n\in\ob(\M),$ then the following \textit{right unity diagram} commutes in $C:$
        \begin{center}
            \begin{tikzcd}
                \M(\langle a\rangle;b)\otimes\bigotimes\limits_{j=1}^n1\ar[d,"\id \otimes \bigotimes\limits_{j=1}^n1_{a_j}"swap]\ar[rd,"\cong"]& \\
                \M(\la a\ra;b)\otimes\bigotimes\limits_{j=1}^n\M(a_j;a_j)\ar[r,"\gamma"swap]&\M(\la a\ra;b).
            \end{tikzcd}
        \end{center}
        With $b,\la a\ra$ as before, we also demand that the following \textit{left unity diagram} commutes in $C.$
        \begin{center}
            \begin{tikzcd}
                1\otimes M(\la a\ra;b)\ar[rd,"\lambda"]\ar[d,"1_b\otimes \id"swap]& \\
                \M(b;b)\otimes \M(\langle a\rangle;b)\ar[r,"\gamma"swap]&\M(\la a\ra;b).
            \end{tikzcd}
        \end{center}
    
        \item \textbf{Top equivariance:} For every $c\in\ob(\M),$ $n\geq 1,$ $\la b \ra =\la b_j\ra_{j=1}^n\in\ob(\M)^n,$ $k_j\geq 0$ for $1\leq j\leq n,$ $\la a_j\ra=\la a_{j,i}\ra_{i=1}^{k_j}\in \ob(\M)^{k_j}$ for $1\leq j \leq n,$ and $\sigma\in \Sigma_n,$ the following diagram commutes:
        \begin{equation}\label{top}
            \begin{tikzcd}
                \M(\la b\ra;c)\otimes\bigotimes\limits_{j=1}^n\M(\la a_j\ra;b_j)\ar[r,"\sigma\otimes \sigma^{-1}"]\ar[d,"\gamma"swap]&[20pt] \M(\la b\ra\sigma;c)\otimes\bigotimes\limits_{j=1}^n\M(\la a_{\sigma(j)}\ra;b_{\sigma(j)})\ar[d,"\gamma"]\\
                \M(\la a_1\ra,\dots,\la a_n\ra;c)\ar[r,"\sigma\bigl\langle\id_{k_{\sigma(1)}}{,}\dots{,}\id_{k_{\sigma(n)}}\bigr\rangle"swap]&\M(\la a_{\sigma(1)}\ra,\dots,\la a_{\sigma(n)}\ra;c).
            \end{tikzcd}
        \end{equation}
        Here $\sigma^{-1}$ is the unique isomorphism in $C,$ given by the coherence theorem for symmetric monoidal categories, that permutes the factors $\M(\la a_j\ra,b_j)$ according to $\sigma^{-1}.$
        \item \textbf{Bottom equivariance:} For $\la a_j\ra,\la b\ra$ and $c$ as in Top equivariance \Cref{top}, the following diagram commutes:
        \begin{equation}\label{bottom}
            \begin{tikzcd}
                \M(\la b\ra;c)\otimes\bigotimes\limits_{j=1}^n\M(\la a_j\ra;b_j)\ar[r,"\id\otimes \bigotimes\limits_{j=1}^n\tau_j"]\ar[d,"\gamma"swap]&[20pt] \M(\la b\ra,c)\otimes\bigotimes\limits_{j=1}^n\M(\la a_j\ra\tau_j;b_j)\ar[d,"\gamma"]\\
                \M(\la a_1\ra,\dots,\la a_n\ra;c)\ar[r,"\id_n\bigl\langle\tau_1{,}\dots{,}\tau_n\bigr\rangle"swap]&\M(\la a_1\ra\tau_1,\dots,\la a_n\ra\tau_n;c).
            \end{tikzcd}
        \end{equation}
    \end{itemize}
    This concludes the definition of a $C$-multicategory.
\end{defi}
\begin{rmk} A $C$-operad is a $C$-multicategory with one object. If $\mathcal{O}$ is a $C$-operad, its $n$-ary operations will be denoted by $\mathcal{O}_n\in \ob(C).$ A non symmetric $C$-multicategory ($C$-operad) is defined in the same way as a  $C$-multicategory ($C$-operad) excluding the data of the symmetric group action as well as the symmetric group, top and bottom equivariance coherence axioms. We will only be concerned with symmetric multicategories and operads. $C$-multicategories are often referred to as colored operads, with the objects of the $C$-multicategory being referred to as colors and $C$-operads having just one color.
\end{rmk}
\begin{ex}
As examples of $\Set$-operads, where $\Set$ has the monoidal structure induced by products in $\Set,$ we have the commutative operad $\text{Comm}=\{*\}$ with $\text{Comm}_n=\{*\}$.  Another example is the associative operad $\text{Ass}=\Sigma_*$ with $\text{Ass}_n=\Sigma_n,$ with the right action of the symmetric product given by right multiplication and $\gamma$ defined in the following way. If $n\geq 1$ and $k_1,\dots,k_n$ natural numbers with $k=\Sigma_{i=1}^nk_i,$ we define $\gamma\colon \Sigma_n\times\left(\prod_{i=1}^n\Sigma_{k_i}\right)\to \Sigma_k$ given for $\sigma\in \Sigma_n$ and $\la\tau_1,\dots,\tau_n\ra\in \prod_{i=1}^n\Sigma_{k_i}$ by
$$\gamma(\sigma,\la \rho_i\ra_{i=1}^n)=\sigma\la \rho_i\ra_{i=1}^n=\sigma\langle \rho_1,\dots,\rho_n\rangle,$$
as in \Cref{gam}. When $n$ is clear from the context we will write $\sigma\la\rho_i\ra=\sigma\la \rho_i\ra_{i=1}^n.$
\end{ex}
\begin{ex}\label{Barrat}
We will consider $\Cat$-multicategories where the monoidal structure in $\Cat$ is given by products. One source of examples is the forgetful functor $\ob\colon \Cat\to\Set$ which forgets the morphism structure and remembers only the object set.  Its right adjoint $E\colon \Set\to \Cat$ is the functor that takes a set $A$ to $EA,$ the category with objects $\ob(EA)=A,$ and with a unique isomorphism between each pair of objects. $E$ sends a morphism $f\colon A\to B$ of sets to the functor $Ef\colon EA\to EB,$ the only functor such that $f=\ob(Ef)$. $E$ preserves products, and thus, if $\mathcal{O}$ is a  $\Set$-operad, $E\mathcal{O}$ is a $\Cat$-operad. Similarly, if $\M$ is a $\mathbf{Set}$-multicategory, $E\M$ is a $\Cat$-multicategory with the same collection of objects as $\M$.\\

We will call $E \text{Comm}=\{*\}$ the commutative $\Cat$-operad. The Barratt-Eccles operad is the \textbf{Cat}-operad $E\Sigma_*=EAss$. $\mathbf{Cat}$-algebras over this $E\Sigma_*$ are precisely permutative categories \cite{May74}.
\end{ex}
\begin{ex}
Another source of examples for multicategories are symmetric monoidal categories, and thus also permutative categories. Each symmetric monoidal category $C$ has an associated $\Set$-multicategory $End(C),$ whose objects agree with the objects of $C$ and such that for $\la a\ra\in \ob(C)^n$ and $b\in \ob(C),$
$$End(C)(\la a\ra;b)=C(a_1\otimes\cdots\otimes a_n,b).$$
Here we take $a_1\otimes \cdots \otimes a_n$ with the leftmost parenthesization. Any fixed parenthesization would work. An empty string of objects is interpreted as the monoidal unit $1\in \ob(C).$    
\end{ex}

Next, we define 1-cells between $C$-multicategories that preserve the action of the symmetric group. These are called symmetric $C$-multifunctors. 
\begin{defi}\label{multifunc}
    A (symmetric) $C$-multifunctor $F\colon \M\to\Nn$ between $C$-multicategories $\M$ and $\Nn$ consists of the following data.
    \begin{itemize}
        \item An object assignment $F\colon \ob(\M)\to\ob(\Nn)$.
        \item For each $n\geq 0,$ $\la a\ra\in\ob(\M)^n$ and $b\in\ob(\M)$ a $C$ morphism
        \begin{center}
            \begin{tikzcd}
                \M(\la a\ra;b)\ar[r,"F"]&\Nn(\la Fa\ra;Fb).
            \end{tikzcd}
        \end{center}
    \end{itemize}
    These data are required to preserve units, composition, and the action of the symmetric group.
    \begin{itemize}
        \item\label{1axmultif} \textbf{Units:} For each object $a\in\ob(\M),$ 
        $F(1_a)=1_{Fa},$ i.e., the following diagram commutes in $C:$
        \begin{center}
            \begin{tikzcd}
                &[10pt]\M(a,a)\ar[dr,"F"]&\\
                1\ar[ru,"1_a"]\ar[rr,"1_{Fa}"]&&\N(Fa,Fa).
            \end{tikzcd}
        \end{center}
        \item \textbf{Composition:} For every $c\in\ob(\M),$ $n\geq 0,$ $\la b \ra =\la b_j\ra_{j=1}^n\in \ob(M)^n,$ $k_j\geq 0$ for $1\leq j\leq n,$   and $\la a_j\ra=\la a_{j,i}\ra_{i=1}^{k_j}\in \ob(\M)^{k_j}$ for $1\leq j \leq n$ and $1\leq i \leq n,$ the following diagram commutes in $C:$
        \begin{center}
            \begin{equation}\label{2axmultif}
                \begin{tikzcd}
                    \M (\la b\ra;c)\otimes\bigotimes\limits_{j=1}^n\M(\langle a_j\rangle;b_j)\ar[r,"F\otimes\bigotimes\limits_{j=1}^n F"]\ar[d,"\gamma"{swap,name=h}]&[5pt]\Nn(\la Fb\ra;Fc)\otimes\bigotimes\limits_{j=1}^n\Nn(\langle Fa_j\rangle;Fb_j)\ar[d,"\gamma"name=g]\\
            \M(\la a\ra;c)\ar[r,"F"swap]&|[alias=N1]|\Nn(\la Fa\ra;Fc).
                \end{tikzcd}
            \end{equation}
        \end{center}
        \item\textbf{Symmetric Group Action:} For each $\la a \ra \in\ob(\M)^n$ and $b\in\ob(\M)$ the following diagram commutes in $C:$
        \begin{center}
            \begin{equation*}
                \begin{tikzcd}
                    \M(\la a\ra;b)\ar[r,"F"]\ar[d,"\sigma","\cong"swap]&\Nn(\la Fa\ra;Fb)\ar[d,"\sigma","\cong"swap]\\
                    \M(\la a\ra\sigma;b)\ar[r,"F"swap]&\N(\la Fa\ra\sigma;Fb).
                \end{tikzcd}
            \end{equation*}
        \end{center}
    \end{itemize}
    \end{defi}
    Next we define composition of $C$-multifunctors.
    \begin{defi}\label{1comp}
    \begin{itemize}
        \item  We define the \textit{horizontal composition} of $C$-multifunctors in the following way. Let $F\colon \M \to\Nn,$ and $G\colon \Nn\to \mathcal{Q}$ be $C$-multifunctors, we define the $C$-multifunctor $GF\colon \M\to \mathcal{Q}$ \cite{Y23} on objects as the composition
        \begin{center}
            \begin{tikzcd}
        \ob(\M)\ar[r,"F"]&\ob(\Nn)\ar[r,"G"]&\ob(\mathcal{Q}),
            \end{tikzcd}
        \end{center}
        and its component functors for $\la a\ra\in\ob(\M)^n$, $b\in\ob(\M)$ as the composite
        \begin{center}
            \begin{tikzcd}
                \M(\la a\ra;b)\ar[r,"F"]&\Nn(\la Fa\ra ;Fb)\ar[r,"G"]&\mathcal{Q}(\la GFa\ra;GFb).
            \end{tikzcd}
        \end{center}
        \item The identity $C$-multifunctor $1_\M\colon \M\to\M$ is defined as the identity assignment on objects with the identity functors as component functors.
    \end{itemize}
\end{defi}
Next we define 2-cells between $C$-multifunctors. These will be the 2-cells of a 2-category with 0-cells $C$-multicategories and $1$-cells $C$-multifunctors.
\begin{defi}\normalfont{(\cite{Y23}, Def. 3.2.5)}
    For (symmetric) $C$-multifunctors $F,G\colon \M\to \Nn,$ we define  a $C$-\textit{multinatural transformation} $\theta\colon F\Rightarrow G$ as the data of a component morphism $\theta_a\colon 1\to\Nn(Fa,Ga)$ in $C$ for each $a\in\ob(\M)$ subject to the commutativity of the following diagram in $C$ for each $\la a\ra\in\ob(\M)^n$ and $b\in \ob(\M),$
    \begin{center}
    \begin{tikzcd}
        &[-20pt]1\otimes \M(\la a\ra;b)\ar[r,"\theta_b\otimes F"]&\N(Fb;Gb)\otimes\N(\la Fa\ra;Fb)\ar[rd,"\gamma"]&[-70pt]\\
        \M(\la a\ra,b)\ar[ru,"\cong"]\ar[rd,"\cong"swap]&&&\N(\la Fa\ra;Gb).\\
        &\M(\la a\ra;b)\otimes \bigotimes_{j=1}^n 1\ar[r,"G\otimes \bigotimes \theta_{a_j}"]&\N(\la Ga\ra;Gb)\otimes\bigotimes_{j=1}^n\N(Fa_j;Ga_j)\ar[ru,"\gamma"swap]&
    \end{tikzcd}
    \end{center}
    \end{defi}
    We define the identity multinatural transformation $1_F\colon F\to F$ to have = components $(1_F)_a=1_{Fa}$ for $a$ an object of $\M.$
    \begin{rmk}\label{gammanotation}
        When $C=\Cat,$ and given $F,G\colon \M\to\Nn$ $\Cat$-multifunctors and the data of a 1-ary 1-cell $\theta_a\colon Fa\to Ga$ for each $a\in\ob(M),$ the commutativity of the diagram in the previous definition means that for every $n\geq 0,$ $\la a\ra\in\ob(\M)^n,$ $b\in\ob(\M)$ and each 1-cell $f\colon \la a\ra\to  b,$ \begin{equation}\label{trans1}
        \gamma(Gf;\la \theta_{a_j}\ra)=\gamma(\theta_{b};Ff)
    \end{equation}
    holds in $\N(\la Fa\ra;Gb)$ and that,  for every 2-cell $\alpha\colon f\to g$ in $\M(\la a\ra;b)(f,g),$
    \begin{equation}\label{trans2}
        \gamma(G\alpha;\la 1_{\theta_{a_j}}\ra)=\gamma(1_{\theta_{b}};F\alpha)
    \end{equation}
    in $\N(\la Fa\ra;Gb).$ We can express \Cref{trans1}  diagrammatically as the commutativity of the square
    \begin{center}
        \begin{tikzcd}
            \la Fa\ra\ar[r,"\langle\theta_{a_j}\rangle "]\ar[d,"Ff"']&\langle Ga\rangle\ar[d,"Gf"]\\
            Fb\ar[r,"\theta_b"]&Gb,
        \end{tikzcd}
    \end{center}
    where the composition of adjacent 1-cells is done through $\gamma$ and a square represents an equality between composite 1-cells. In the same fashion, and using \Cref{trans1}, we can express \Cref{trans2} as the equality of multicategorical pasting diagrams
    \begin{center}
           \begin{tikzcd}
\la Fa\ra \arrow[dd, bend left=50, "Fg"{name=U}]\arrow[dd, bend right=50, "Ff"'{name=D}]\ar[r,"\langle \theta_{a_j}\rangle"]&\la Ga\ra\ar[dd,"Gg"]&[-20pt]&[-20pt] \la Fa\ra\ar[r,"\la\theta_{a_j}\ra"]\ar[dd,"Ff"']&\la Ga\ra\arrow[dd, bend left=50, "Gg"{name=A}]\arrow[dd, bend right=50, "Gf"'{name=B}]\\[-12pt]
&&\mathbf{=}&&\\[-12pt]
Fb\ar[r,"\theta_b"']&Gb&&Fb\ar[r,"\theta_b"]&Gb.
\arrow[Rightarrow, from=D, to=U,shorten <=2mm,shorten >=1mm,"F\alpha"]
\arrow[Rightarrow, from=B, to=A,shorten <=2mm,shorten >=1mm,"G\alpha"]
\end{tikzcd}
    \end{center}
    Here the concatenation of adjacent 2-cells is done through $\gamma,$ and an arrow labeled with the 1-cell $h$ is interpreted as the 2-cell $1_h:h\to h.$ For example, the left hand side diagram represents $\gamma(1_{\theta_b},F\alpha)$ while the right hand side represents $\gamma(G\alpha,\la \theta_{\alpha_j}\ra).$ The empty squares represent equalities between composite 1-cells.
    \end{rmk}
 Next, we define horizontal and vertical compositions of $C$-multinatural transformations.
\begin{defi}\label{2comp}\normalfont{(\cite{Y23}, Def. 3.2.7)}
    \begin{center}
Suppose given $\theta\colon F\Rightarrow G,$ $\zeta\colon G\Rightarrow H$  $C$-multinatural transformations with $F,G,H\colon \M\to \Nn$ $C$-multifunctors. The \textit{vertical composition} $\zeta\theta\colon F\Rightarrow H$ is defined as having as component at each $a\in\ob(\M)$ $(\zeta\theta)_a,$ the composite
    \begin{equation*}
        \begin{tikzcd}
            1\ar[r,"\cong"]&1\otimes 1\ar[r,"\zeta_a\otimes\theta_a"]&\N(Ga;Ha)\otimes \N(Fa;Ga)\ar[r,"\gamma"]&\Nn(Fa;Ha).
        \end{tikzcd}
        \end{equation*}
    \end{center}
     Suppose that $\theta\colon F\Rightarrow G$ and  $\zeta\colon F'\Rightarrow G'$ are $C$-multinatural transformations with  $F,G\colon \M\to\Nn$ and $F',G'\colon \N\to \mathcal{Q}$ $C$-multifunctors.  The \textit{horizontal composition} $\zeta*\theta\colon F'F\Rightarrow G'G$ is defined as the $C$-multinatural transformation with component at each $a\in\ob(\M),$  given by the composite
    \begin{center}
    \begin{equation*}
        \begin{tikzcd}[font=\tiny]
        1\ar[d,"\cong"]\ar[rr,"(\zeta *\theta)_a"]&&[-15pt]\mathcal{Q}(F'Fa;G'Ga)\\
            1\otimes 1\ar[r,"\zeta_{Ga}\otimes \theta_a"]&\mathcal{Q}(F'Ga;G'Ga)\otimes \N(Fa;Ga)\ar[r,"1 \otimes F'"]&\mathcal{Q}(F'Ga;G'Ga)\otimes\mathcal{Q}(F'Fa;F'Ga).\ar[u,"\gamma"]
        \end{tikzcd}
        \end{equation*}
    \end{center}
\end{defi}
\begin{rmk}
    When $C=\Cat$ and given $\theta\colon F\Rightarrow G,$ $\zeta\colon G\Rightarrow H$  $\Cat$-multinatural transformations with $F,G,H\colon \M\to \Nn$ $C$-multifunctors and $a\in\ob(\M),$
    \begin{equation}\label{vert}
        (\zeta\theta)_a=\gamma(\zeta_a,\theta_a.)
    \end{equation}
    On the other hand, if $\theta\colon F\Rightarrow G$ and  $\zeta\colon F'\Rightarrow G'$ are $\Cat$-multinatural transformations with  $F,G\colon \M\to\Nn$ and $F',G'\colon \N\to \mathcal{Q}$ $\Cat$-multifunctors,
    \begin{equation}\label{hor}
        (\zeta*\theta)_a=\gamma(\zeta_{Ga};F'\theta_a).
    \end{equation}
\end{rmk}
Yau proves in \cite{Y23} that  \Cref{multicat,multifunc,1comp,2comp} assemble together to give the 2-category $C$-$\mathbf{Multicat}$, with  0-cells consisting of $C$-multicategories, 1-cells symmetric $C$-multifunctors, and 2-cells $C$-multinatural transformations. There is  a non symmetric variant where we drop the requirement that the $C$-multifunctors preserve the symmetric group action, as well as dropping the coherence axioms related to the symmetric group action, but we won't refer to this 2-category again.\\

For the rest of the article we fix our symmetric monoidal category $C$ to be $\Cat,$ with the symmetric monoidal structure induced by products. In this context we can define a pseudo symmetric variant of this 2-category, namely $\Cat$-$\mathbf{Multicat^{ps}}$ using the 2-categorical structure of $\Cat$. The objects of $\Cat$-$\mathbf{Multicat^{ps}}$ are still $\Cat$-multicategories, but the 1-cells are pseudo symmetric $\Cat$-multifunctors:  $\Cat$-multifunctors where we only require that they preserve the symmetric group action up to coherent isomorphisms.

\begin{defi}\normalfont{(\cite{Y23} Def. 4.1.1)} Suppose that $\mathcal{M},\mathcal{N}$ are $\Cat$-multicategories. A  \textit{pseudo symmetric $\Cat$-multifunctor} $F\colon \mathcal{M}\to \mathcal{N}$ consists of the following data:
\begin{itemize}
    \item A  function on object sets $F\colon \ob(\mathcal{M})\to\ob(\mathcal{N}).$
    \item For each $\la a\ra\in \ob(\mathcal{M})^n$ and $b\in\ob(\mathcal{M}),$ a component functor
    \begin{center}
\begin{tabular}{c}
    \xymatrix{\mathcal{M}(\langle a\rangle;b)\ar[r]^-{F}&\mathcal{N}(\langle Fa\rangle;Fb).}
\end{tabular}
\end{center}
\item For each $\sigma\in \Sigma_n,$ $\langle a\rangle\in\ob(\M)^n,$ $b\in \ob(\M),$  a natural isomorphism $F_{\sigma,\langle a\rangle,b}$
\begin{center}
\begin{tikzcd}
  \mathcal{M}(\langle a\rangle;b)\ar[r,"F"]\ar[d,"\sigma"swap]&|[alias=N]|\mathcal{N}(\langle Fa\rangle;Fb)\ar[d,"\sigma"]\\
  |[alias=M]|\mathcal{M}(\langle a\rangle\sigma;b)\ar[r,"F"swap]&\mathcal{N}(\langle Fa\rangle\sigma;Fb).
  \arrow[Rightarrow,from=M, to=N,shorten >=4mm,shorten <=4mm,"F_{\sigma,\langle a\rangle,b}"swap,"\cong"]
\end{tikzcd}
\end{center}
When $\langle a\rangle$ and $b$ are clear from the context we write simply $F_\sigma,$ and if $f\in\ob(\M(\la a\ra,b))$ we will denote by $F_{\sigma,\la a\ra,b;f}=F_{\sigma;f}\colon F(f\sigma)\to F(f)\sigma$ the 2-cell in $\Nn(\la Fa\ra\sigma;Fb)$ corresponding to the component of $F_\sigma$ at $f.$  Naturality for $F_\sigma$ means that given $\alpha\colon f\to g$ a 2-cell in $\M(\la a\ra;b)(f,g),$ the following diagram commutes in $\N(\la F a\ra\sigma;b):$  
\begin{equation}\label{4.1.9}
\begin{tikzcd}
F(f\sigma)\ar[r,"F_{\sigma ;f}"]\ar[d,"F(\alpha \sigma)"swap]&F(f)\sigma\ar[d,"(F\alpha)\sigma"]\\
F(g\sigma)\ar[r,"F_{\sigma;g}"swap]&F(g)\sigma.
    \end{tikzcd}
\end{equation}
\end{itemize}
These data are subject to the same axioms of unit and composition preservation \Cref{2axmultif}  as a symmetric $\mathbf{Cat}$-multifunctor, but we replace the symmetric group action preservation axiom by the following four axioms.\\

\begin{itemize}
    \item \textbf{Unit permutation:} Let $n\geq 0,$ $\la a\ra\in\ob(\M)^n$ and $b\in\ob(\M),$ then
    \begin{center}
        \begin{equation}\label{pseudounit}
        F_{\text{id}_n,\langle a\rangle,b}=1_F.
    \end{equation}
    \end{center}
    \item \textbf{Product permutation:} This axiom expresses the coherence of the natural isomorphisms $F_{\sigma},$ for varying $\sigma,$ with respect to the symmetric group action. Let $n\geq 0,$ $\la a\ra\in\ob(\M)^n,$ $b\in\ob(M)$ and $\sigma,\tau \in \Sigma_n.$ Then, the following equality of pasting diagrams holds.
    \begin{center}
    \begin{tikzcd}
        \mathcal{M}(\langle a\rangle;b)\ar[r,"F"]\ar[d,"\sigma"swap]&|[alias=N1]|\mathcal{N}(\langle Fa\rangle;Fb)\ar[d,"\sigma"]&[-30pt]&[-30pt]\mathcal{M}(\langle a\rangle;b)\ar[r,"F"]\ar[dd,"\sigma\tau"swap]&|[alias=N3]|\mathcal{N}(\langle Fa\rangle;Fb)\ar[dd,"\sigma\tau"]\\
        |[alias=M1]|\mathcal{M}(\langle a\rangle\sigma;b)\ar[r,"F"swap]\ar[d,"\tau"swap]&|[alias=N2]|\mathcal{N}(\langle Fa\rangle\sigma;Fb)\ar[d,"\tau"]&=&&\\
         |[alias=M2]|\mathcal{M}(\langle a\rangle\sigma\tau;b)\ar[r,"F"swap]&\mathcal{N}(\langle Fa\rangle\sigma\tau;Fb)&&|[alias=M3]|\mathcal{M}(\langle a\rangle\sigma\tau;b)\ar[r,"F"swap]&\mathcal{N}(\langle Fa\rangle\sigma\tau;Fb).
         \arrow[Rightarrow,from=M1,to=N1,"F_\sigma"swap,shorten >=4mm,shorten <=4mm]
         \arrow[Rightarrow,from=M2,to=N2,"F_\tau"swap,shorten >=4mm,shorten <=4mm]
    \arrow[Rightarrow,from=M3,to=N3,"F_{\sigma\tau}"swap,shorten >=10mm,shorten <=8mm]
    \end{tikzcd}
    \end{center}
    Thus, for every 1-cell $f\in\ob(\M(\la a\ra;b)),$ the following diagram of 2-cells commutes in $\Nn(\la Fa\ra;Fb)$:
    \begin{equation}\label{4.1.10}
       \begin{tikzcd}
       &F(f\sigma)\tau\ar[rd,"(F_{\sigma;f})\tau"]&\\
F(f\sigma\tau)\ar[ru,"F_{\tau;f\sigma}"]\ar[rr,"F_{\sigma\tau;f}"swap]&&F(f)\sigma\tau .
    \end{tikzcd} 
    \end{equation}
    \item \textbf{Top equivariance:} For every $c\in\ob(\M),$ $n\geq 0,$ $\la b \ra =\la b_j\ra_{j=1}^n\in \ob(M)^n,$ $k_j\geq 0$ for $1\leq j\leq n,$   and $\la a_j\ra=\la a_{j,i}\ra_{i=1}^{k_j}\in \ob(\M)^{k_j}$ for $1\leq j \leq n$ and $1\leq i \leq n,$ and $\sigma\in \Sigma_n,$ the following two pasting diagrams are equal. \\
    \begin{center}
        \begin{tikzcd}[font=\tiny]
            \M (\la b\ra;c)\times\prod_{j=1}^n\M(\langle a_j\rangle;b_j)\ar[rr,"F\times\prod_i F"]\ar[d,"\gamma"{swap,name=h}]&[-15pt]&[-15pt]\Nn(\la Fb\ra;Fc)\times\prod_{j=1}^n(\langle Fa_j\rangle;Fb_j)\ar[d,"\gamma"name=g]\\
            
            \M(\la\la a_j\ra\ra_{j=1}^n;c)\ar[d,"\sigma\la\id_{k_{\sigma(j)}}\ra"swap]\ar[rr,"F"]&&|[alias=N1]|\Nn(\la\la Fa_j\ra\ra_{j=1}^n;Fc)\ar[d,"\sigma\la\id_{k_{\sigma(j)}}\ra"]\\
            |[alias=M1]|\M(\la\la a_{\sigma(j)}\ra\ra_{j=1}^n;c)\ar[rr,"F"swap]&&\Nn(\la\la Fa_{\sigma(j)}\ra\ra_{j=1}^n;Fc)\\[-10pt]
            
            & \mathbf{\parallel} &\\
            \M (\la b\ra;c)\times\prod_{j=1}^n\M(\langle a_j\rangle;b_j)\ar[d,"\sigma\times 
            \sigma^{-1}"swap]\ar[rr,"F\times\prod_j F"]&&|[alias=N2]|\Nn(\la Fb\ra;Fc)\times\prod_{j=1}^n\Nn(\langle Fa_j\rangle;Fb_j)\ar[d,"\sigma\times 
            \sigma^{-1}"]\\
            |[alias=M2]|\M (\la b\ra\sigma ;c)\times\prod_{j=1}^n\M(\langle a_{\sigma(j)}\rangle;b_{\sigma(j)})\ar[rr,"F\times\prod_j F"swap]\ar[d,"\gamma"{swap,name=e}]&&\Nn(\la Fb\ra\sigma;Fc)\times\prod_{j=1}^n\Nn(\langle Fa_{\sigma(j)}\rangle;Fb_{\sigma(j)})\ar[d,"\gamma"name=f]\\
            \M(\la\la a_{\sigma(j)}\ra\ra_{j=1}^n;c)\ar[rr,"F"swap]&&\Nn(\la\la Fa_{\sigma(j)}\ra\ra_{j=1}^n;Fc)
            \arrow[Rightarrow,from=M2, to=N2,shorten=12mm,"F_{\sigma}\times 1"swap]
            \arrow[Rightarrow,from=M1, to=N1,shorten=14mm,"F_{\sigma\la\id_{k_j}\ra}"swap]
        \end{tikzcd}
\end{center}
Here $\sigma\la \id_{k_{\sigma(j)}}\ra=\sigma\la\id_{k_{\sigma(1)}},\dots,\id_{k_\sigma(n)}\ra.$ This means that for 1-cells $f\in\ob(\M(\la b\ra;c))$ and $g_j\in \ob(\M(\la a_j\ra;b_j))$ for $1\leq j\leq n,$
\begin{equation}\label{4.1.11}
F_{\sigma\la \id_{k_{\sigma(j)}}\ra;\gamma(f;\la g_j\ra)}=\gamma\left(F_{\sigma;f};\la 1_{Fg_{\sigma (j)}}\ra_{j=1}^n\right).
\end{equation}
The domains and codomains of these pasting diagrams are equal by top equivariance in $\M$ and $\N,$ and the fact that $F$ preserves $\gamma$ implies the commutativity of the empty rectangles, see \cite{Y23}.\\

\item \textbf{Bottom Equivariance:} For every $c\in\ob(\M),$ $n\geq 0,$ $\la b \ra =\la b_j\ra_{j=1}^n\in \ob(M)^n,$ $k_j\geq 0$ for $1\leq j\leq n,$   and $\la a_j\ra=\la a_{j,i}\ra_{i=1}^{k_j}\in \ob(\M)^{k_j}$ for $1\leq j \leq n$ and $1\leq i \leq k_j,$ and $\tau_j \in \Sigma_{k_j},$ the following two pasting diagrams are equal.
\begin{center}
        \begin{tikzcd}[font=\small]
            \M (\la b\ra;c)\times\prod_{j=1}^n\M(\langle a_j\rangle;b_j)\ar[rr,"F\times\prod_j F"]\ar[d,"\gamma"{swap,name=h}]&[-10pt]&[-30pt]\Nn(\la Fb\ra;Fc)\times\prod_{j=1}^n(\langle Fa_j\rangle;Fb_j)\ar[d,"\gamma"name=g]\\
            
            \M(\la\la a_j\ra\ra_{j=1}^n;c)\ar[d,"\id_n\la\tau_j\ra"swap]\ar[rr,"F"]&&|[alias=N1]|\Nn(\la\la Fa_j\ra\ra_{j=1}^n;Fc)\ar[d,"\id_n\la\tau_j\ra"]\\
            |[alias=M1]|\M(\la\la a_j\ra\tau_j\ra_{j=1}^n;c)\ar[rr,"F"swap]&&\Nn(\la\la Fa_j\ra\tau_j\ra_{j=1}^n;Fc)\\[-10pt]
            
            &\| &\\[-10pt]
            \M (\la b\ra;c)\times\prod_{j=1}^n\M(\langle a_j\rangle;b_j)\ar[d,"\id\times \prod_j\tau_j"swap]\ar[rr,"F\times\prod_j F"]&&|[alias=N2]|\Nn(\la Fb\ra;Fc)\times\prod_{j=1}^n\Nn(\langle Fa_j\rangle;Fb_j)\ar[d,"\id\times \prod_j\tau_j"]\\
            |[alias=M2]|\M (\la b\ra;c)\times\prod_{j=1}^n\M(\langle a_j\rangle \tau_j;b_j)\ar[rr,"F\times\prod_j F"swap]\ar[d,"\gamma"{swap,name=e}]&&\Nn(\la Fb\ra;Fc)\times\prod_{j=1}^n\Nn(\la\langle Fa_j\rangle\tau_j\ra;Fb_j)\ar[d,"\gamma"name=f]\\
            \M(\la\la a_j\ra\tau_j\ra_{j=1}^n;c)\ar[rr,"F"swap]&&\Nn(\la\la Fa_j\ra\tau_j\ra_{j=1}^n;Fc)
            \arrow[Rightarrow,from=M1, to=N1,shorten=13mm,"F_{\id_n\la\tau_i\ra}"swap]
            \arrow[Rightarrow,from=M2, to=N2,shorten=12mm,"1\times\prod_j F_{\tau_j}"swap]
        \end{tikzcd}
\end{center}
This means that for 1-cells $f\colon \la b\ra\to c$ and $g_j\colon \la a_j\ra\to b_j$ for $1\leq j\leq n,$
\begin{equation}\label{4.1.12}
    F_{\id_n\la\tau_j\ra;\gamma(f;\la g_j\ra})=\gamma(1_{Ff};\la F_{\tau_j;g_j}\ra)
\end{equation}
as 2-cells in $\Nn(\la\la Fa_j\ra\tau_j\ra;Fc).$ The domain and codomain of these pasting diagrams are equal by bottom equivariance for $\M$ and $\N$, and the preservation of $\gamma$ by $F$ guarantees that the empty squares commute, see \cite{Y23}.
\end{itemize}
\end{defi}
Next we describe the horizontal composition of 1-cells in the 2-category $\Cat$-$\mathbf{Multicat^{ps}}.$ 
\begin{defi}\normalfont{(\cite{Y23} Def. 4.1.1)}
 Let $F\colon \M\to\Nn,$ and $G\colon \Nn\to \mathcal{Q}$ be pseudo symmetric $\Cat$-multifunctors. We define the pseudo symmetric functor $GF\colon \M\to \mathcal{Q}.$ On objects $GF$ is the composite function $GF\colon \ob(\M)\to\ob(\mathcal{Q}).$ The composite component functor is given for $\la a\ra\in\ob(\M)^n,$ and $b\in\ob(\M)$ by the pasting
\begin{center}
    \begin{tikzcd}
        \M(\la a\ra;b)\ar[r,"F"]&\Nn(\la Fa\ra;b)\ar[r,"G"]&\Qq (\la GFa\ra;GF b).
    \end{tikzcd}
\end{center}
The symmetry isomorphisms are given for each $\sigma\in\Sigma_n,$ $\la a\ra \in\ob (\M),$ and $b\in \ob(\M)$ by
\begin{center}
    \begin{tikzcd}
        \M(\la a\ra;b)\ar[d,"\sigma"swap]\ar[r,"F"]&|[alias=N1]|\Nn(\la Fa\ra;Fb)\ar[d,"\sigma"]\ar[r,"G"]&|[alias=Q]|\Qq(\la GF a\ra;GFb)\ar[d,"\sigma"]\\
        |[alias=M]|\M(\la a\ra\sigma;b)\ar[r,"F"]&|[alias=N2]|\Nn(\la Fa\ra\sigma;fb)\ar[r,"G"]&\Qq(\la GFa\ra\sigma;GFb).
        \arrow[Rightarrow,from=M, to=N1,shorten=4mm,"F_{\sigma,\la a\ra,b}"swap]
        \arrow[Rightarrow,from=N2, to=Q,shorten=5mm,"G_{\sigma,\la Fa\ra,Fb}"swap]
    \end{tikzcd}
\end{center}
That is, for each 1-cell $f\colon \la a\ra\to b,$ the $f$ component of $GF_\sigma$ is given by the composite
\begin{equation}\label{horcomp}
\begin{tikzcd}
&G((Ff)\sigma)\ar[rd,"G_{\sigma;Ff}"]&\\
    GF(f\sigma)\ar[rr,"(GF)_{\sigma;f}"swap]\ar[ru,"G(F_{\sigma;f})"]&&(GFf)\sigma .
\end{tikzcd}
\end{equation}
\end{defi}
Next we define the 2-cells of the category $\Cat$-$\mathbf{Multicat^{ps}}.$
\begin{defi}\normalfont{(\cite{Y23} Def. 4.2.1)} 
Suppose that $F,G\colon \M\to \Nn$ are pseudo symmetric $\Cat$-multifunctors. A \textit{pseudo symmetric} $\Cat$-multinatural transformation $\theta\colon F\Rightarrow G$ is the data of a component 1-cell $\theta_a:Fa\to Ga$ for each $a\in\ob(\M)$ subject to axioms \Cref{trans1}, \Cref{trans2} and the following extra axiom. For each $n\geq 0,$ $\la a\ra\in\ob(\M)^n,$ $b\in\ob(M),$  object $f\in \ob(\M(\la a\ra;b)),$ and permutation $\sigma\in \Sigma_n,$ the following arrow equality holds in the category $\Nn(\la Fa\ra\sigma;b),$ 
\begin{center}
    \begin{equation}
    \gamma\left(1_{\theta_{b}};F_{\sigma;f}\right)=\gamma\left( G_{\sigma;f};\la 1_{\theta_{ a_{\sigma(j)}}}\ra\right).
    \end{equation}
\end{center}
This can also be expressed diagrammatically as the equality of multicategorical pasting diagrams 
\begin{center}
           \begin{center}
           \begin{tikzcd}
\la Fa\ra \sigma\arrow[dd, bend left=50, "(Ff)\sigma"{name=U}]\arrow[dd, bend right=50, "F(f\sigma)"'{name=D}]\ar[r,"\langle \theta_{a_{\sigma(j)}}\rangle"]&G\la a\ra\sigma\ar[dd,"(Gf)\sigma"]&& \la Fa\ra\sigma\ar[r,"\la\theta_{a_{\sigma(j)}}\ra"]\ar[dd,"F(f\sigma)"']&\la Ga\ra\sigma\arrow[dd, bend left=50, "(Gf)\sigma"{name=A}]\arrow[dd, bend right=50, "G(f\sigma)"'{name=B}]\\[-12pt]
&&\mathbf{=}&&\\[-12pt]
Fb\ar[r,"\theta_b"']&Gb&&Fb\ar[r,"\theta_b"]&Gb,
\arrow[Rightarrow, from=D, to=U,shorten <=2mm,shorten >=1mm,"F_{\sigma;f}"]
\arrow[Rightarrow, from=B, to=A,shorten <=2mm,shorten >=1mm,"G_{\sigma;f}"]
\end{tikzcd}
    \end{center}
    \end{center}
where the diagrams are interpreted as in \Cref{gammanotation}, the squares commuting by \Cref{trans1} and top and bottom equivariance for $\N,$ see \cite{Y23}.\\

We define the vertical and horizontal composition of pseudo symmetric $\Cat$-multinatural transformations in the same way that we did for symmetric ones, through diagrams \Cref{vert} and \Cref{hor}. 
\end{defi}
It is a theorem of Yau \cite{Y23} that the data we have just defined gives the structure of a 2-category, namely $\Cat$-$\mathbf{Multicat^{ps}}.$ \Cref{2D} says that we can describe this 2-category solely in terms of symmetric $\Cat$-multifunctors and symmetric $\Cat$-multinatural transformations. 

\section{Equivalent definition of Pseudo Symmetry}\label{Section3}
To prove our first result we use finite products in the category $\Cat\mbox{-}\textbf{Multicat}.$ Having just the 1-categorical structure in mind, the products in $\Cat\mbox{-}\textbf{Multicat}$ are given in the following way. If $\mathcal{M}$ and $\mathcal{N}$ are two $\Cat$-multicategories, then $\mathcal{M}\times \mathcal{N}$ has objects $\ob(\mathcal{M}\times \mathcal{N})=\ob(\mathcal{M})\times \ob(\mathcal{N}).$ Now, for $n\geq 0,$ $\la a\ra\in\ob(\M)^n,$ $\la c\ra\in\ob(\Nn)^n,$ $b\in \ob(\M),$ and $d\in\ob(\Nn),$ we define 
$$\mathcal{M}\times\mathcal{N}(\langle (a,c)\rangle;(b,d))=\mathcal{M}(\langle a\rangle ;b)\times \mathcal{N}(\langle c\rangle;d).$$

The composition $\gamma$ of $\mathcal{M}\times\mathcal{N},$ as well as the $\Sigma_*$ action and the multicategorical units, are defined componentwise. Next we define the pseudo symmetric multifunctor $\eta_\mathcal{M}$ appearing in the statement of \ref{mainresult}.
\begin{defi}
    Let $\M$ be a $\Cat$-multicategory. We define the pseudo symmetric $\Cat$-multifunctor $\eta_\M\colon \M\to\M\times E\Sigma_*$ which, when there is no room for confusion, we will denote $\eta.$ For an object $a\in\ob(\M)$ as $\eta(a)=(a,*).$ We will abuse notation and denote the object $(a,*)$ of $\M\times E\Sigma_*$ as $a.$ \\

For  $n\geq 0,$ $\langle a\rangle\in \ob(\mathcal{M})^n$ and $b\in \ob(\mathcal{M})$ we need to define a functor $\eta\colon\mathcal{M}(\langle a\rangle;b) \to \mathcal{M}(\langle a\rangle;b)\times E\Sigma_n$. For a 1-cell $f\colon \la a\ra\to b,$ we define 
$$\eta(f)=(f,\text{id}_n)\in\ob(\M (a;b)\times E\Sigma_n).$$
Similarly, for a 2-cell $\alpha\colon f\to g$ in $\mathcal{M}(\langle a\rangle;b),$
$$\eta(\alpha)=(\alpha,1_{\text{id}_n})\in\M(\la a\ra;b)\times E\Sigma_n((f,\id_n)),(g,\id_n).$$

Next, we define the components of the pseudo symmetry isomorphisms. For $\sigma,\tau\in \Sigma_n$ we will denote from here on by $E_\sigma^\tau$ the unique arrow $\sigma\to \tau$ in $E\Sigma_n.$ For $\sigma\in \Sigma_n,$ $\langle a\rangle\in \ob(\M)^n,$ and $b\in \ob(\M)$ we need to define a natural isomorphism $\eta_{\sigma,{\langle a\rangle},b}\colon (\eta\circ\sigma)\to (\sigma\circ \eta)$ that fits in the following diagram
\begin{center}
\begin{tikzcd}
  \mathcal{M}(\langle a\rangle;b)\ar[r,"\eta"]\ar[d,"\sigma"swap]&|[alias=N]|\mathcal{M}(\langle a\rangle;b)\times E\Sigma_n\ar[d,"\sigma\times\sigma"]\\
  |[alias=M]|\mathcal{M}(\langle a\rangle \sigma,b)\ar[r,"\eta"swap]&\mathcal{M}(\langle a\rangle \sigma;b)\times E\Sigma_n.
  \arrow[Rightarrow,from=M, to=N,shorten >=4mm,shorten <=4mm,"\eta_{\sigma,\langle a\rangle,b}"swap,"\cong"]
\end{tikzcd}
\end{center}

 The isomorphism $\eta_{\sigma,{\langle a\rangle},b}$ is defined for every 1-cell $f\colon\la a\ra\to b$ as the 2-cell $$\eta_{\sigma;f}=(1_{f\sigma},E_{\text{id}}^{\sigma})\colon(f\sigma,\text{id}_n)\to (f\sigma,\sigma).$$  
\end{defi}
\begin{lem}
    Let $\M$ be a $\Cat$-multicategory, then $\eta_\M\colon \M\to \M\times E\Sigma_*$ is pseudo symmetric.
\end{lem}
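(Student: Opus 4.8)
The plan is to verify directly that the data defining $\eta_\M$ satisfies the four axioms replacing the symmetric group action axiom in the definition of pseudo symmetric $\Cat$-multifunctor, together with the underlying unit and composition axioms. The key observation is that each axiom, when evaluated at a $1$-cell $f$, is an equation of $2$-cells in some hom-category of $\M\times E\Sigma_*$; since every hom-object of $\M\times E\Sigma_n$ is a product $\M(\la a\ra;b)\times E\Sigma_n$, such an equation holds iff it holds in each of the two coordinates. The first coordinate will always be a statement about identity $2$-cells in $\M$ that holds trivially (or, for the composition axiom, an instance of $F$ preserving $\gamma$ which for $\eta$ amounts to associativity of $\gamma$ in $\M$), and the second coordinate will always be a statement about the unique arrows $E_\tau^\rho$ in $E\Sigma_n$ (or $E\Sigma_k$), which holds automatically because $E$ applied to any set has a unique morphism between any two objects. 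So the entire proof reduces to bookkeeping about $E$ and to componentwise checks.

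First I would record that $\eta$ preserves multicategorical units and composition: on objects and $1$-cells this is immediate from $\eta(f)=(f,\id_n)$ and the fact that the units and $\gamma$ of $\M\times E\Sigma_*$ are componentwise, using that in $E\Sigma_*$ we have $1_{a} \mapsto \id_1$ and $\gamma(\id_n;\la\id_{k_j}\ra)=\id_k$; the $2$-cell statements follow similarly since $E\Sigma_n$ has a unique arrow between chosen objects, forcing the relevant squares to commute. Next I would check the \emph{unit permutation} axiom \cref{pseudounit}: we must show $\eta_{\id_n;f}=1_{\eta(f)}$. By definition $\eta_{\id_n;f}=(1_{f\cdot\id_n},E_{\id}^{\id})=(1_f,1_{\id_n})=1_{(f,\id_n)}=1_{\eta(f)}$, using that $E_{\id}^{\id}$ is the identity in $E\Sigma_n$.

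For the remaining three axioms I would work coordinatewise. For \emph{product permutation} \cref{4.1.10}, the claim $\gamma(\eta_{\tau;f\sigma};\ldots)\circ \ldots = \eta_{\sigma\tau;f}$ (diagram \cref{4.1.10}) becomes, in the $\M$-coordinate, an equation among identity $2$-cells $1_{f\sigma\tau}$, trivially true; in the $E\Sigma_n$-coordinate it is the triangle $E_\sigma^{\sigma\tau}\circ(\text{arrow induced by }\tau) = E_{\id}^{\sigma\tau}$ — wait, more precisely it is an equation between two composable chains of arrows in $E\Sigma_n$ with common source $\id_n$ and target $\sigma\tau$, hence equal by uniqueness. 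For \emph{top equivariance} \cref{4.1.11} and \emph{bottom equivariance} \cref{4.1.12}, exactly the same pattern holds: evaluated at $(f;\la g_j\ra)$ the $\M$-coordinate equates identity $2$-cells (on $\gamma(f\sigma;\la g_{\sigma(j)}\ra)$, resp.\ $\gamma(f;\la g_j\tau_j\ra)$) and the $E\Sigma_k$-coordinate equates two morphisms in $E\Sigma_k$ with the same source and target (the source being $\gamma(\id_n;\la\id_{k_j}\ra)=\id_k$ and the target being $\sigma\la\id_{k_{\sigma(j)}}\ra$, resp.\ $\id_n\la\tau_j\ra$), hence again equal by uniqueness of morphisms in $E\Sigma_k$. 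Naturality of each $\eta_\sigma$ \cref{4.1.9} is checked the same way. I expect the main (though still routine) obstacle to be purely notational: carefully matching the indexing of blocks in the top and bottom equivariance diagrams so that the source and target objects in the $E$-coordinate are literally identified using Remark~\ref{gam} and the block-permutation conventions, after which the uniqueness argument closes everything.
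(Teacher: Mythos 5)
Your proposal is correct and follows essentially the same route as the paper: a direct verification of naturality, unit permutation, product permutation, and top/bottom equivariance, using that the hom-categories of $\M\times E\Sigma_*$ are products, that $\eta$ is the identity in the first factor and constant at identity permutations in the second, and that the relevant equalities can be checked coordinatewise. The only difference is cosmetic: where you close each axiom by invoking uniqueness of arrows in $E\Sigma_k$, the paper computes the composites of $E$-arrows explicitly, and (as your ``notational bookkeeping'' remark anticipates) it uses top and bottom equivariance in $\M$ and $\Sigma_*$ to identify the 1-cells carrying the identity 2-cells in the first coordinate --- the same verification in slightly more explicit form.
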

\begin{proof}
We start from a non symmetric multifunctor $\eta\colon \M\to\M\times E\Sigma_*$ that is the identity on the first coordinate and the multicategorical unit in the second coordinate. As a non symmetric multifunctor, $\eta$ preserves units and $\gamma$ composition. We need to show that $\eta$ is a pseudo symmetric $\textbf{Cat}$-multifunctor. The naturality of $\eta_{\sigma;f}$ follows from the commutativity of the following diagram for any 2-cell $\alpha\colon f\to g$: 
 \begin{center}
\begin{tabular}{c}
    \xymatrix{}
\end{tabular}
\end{center}
\begin{center}
    \begin{tikzcd}
        (f\sigma,\text{id}_n)\ar[r,"(1_{f\sigma}{,}E_{\text{id}_n}^{\sigma})"]\ar[d,"(\alpha\sigma{,}1_{\text{id}_n})"swap]&[10pt](f\sigma,\sigma)\ar[d,"(\alpha\sigma{,}1_{\sigma})"]\\(g\sigma,\text{id}_n)\ar[r,"(1_{g\sigma}{,}E_{\text{id}_n}^\sigma)"swap]&(g\sigma,\sigma).
    \end{tikzcd}
\end{center}
Next we focus on the coherence axioms. The unit permutation axiom \Cref{pseudounit} holds since,  for all $\la a\ra\in \ob(\M)^n,$ $b\in\ob(\M),$ and $f\colon \la a\ra\to b,$ $$\eta_{\text{id}_n;f}=(1_{f\id_n},E_{\id_n}^{\id_n})=(1_f,1_{\id_n})=1_{(f,\id_n)}=1_{\eta(f)}.$$

Let $\la a\ra,b$ and $f$ be as before, the product permutation axiom \Cref{4.1.10} holds again by definition. Indeed, for $\tau,\sigma\in \Sigma_n,$  we have
$$\eta_{\sigma\tau;f}=(1_{f\sigma\tau},E_{\text{id}}^{\sigma\tau})=(1_{f\sigma\tau},E_{\tau}^{\sigma\tau})\circ(1_{f\sigma\tau},E_{\id_n}^\tau)=(\eta_{\sigma;f}\tau )\circ \eta_{\tau;f\sigma}.$$ 
For Top Equivariance \Cref{4.1.11}, suppose that  $c\in\ob(\M),$ $n\geq 1,$ $\la b \ra =\la b_j\ra_{j=1}^n\in\ob(\M)^n,$ $k_j\geq 0$ for $1\leq j\leq n,$ $\la a_j\ra=\la a_{j,i}\ra_{i=1}^{k_j}\in \ob(\M)^{k_j}$ for $1\leq j \leq n,$  $\sigma\in \Sigma_n,$ $f\in\ob(\M(\la b\ra;c)),$ and $g_j\in\ob(\M(\la a_j\ra;b_j)).$ We have that
\begin{align*}
    \gamma(\eta_{\sigma;f};\langle 1_{i(g_{\sigma(j)})}\rangle)&=\gamma((1_{f\sigma},E_{\text{id}}^\sigma);\langle (1_{g_{\sigma(j)}},1_{\text{id}_{k_{\sigma(j)}}})\rangle)\\
    &=\left((\gamma(1_{f\sigma};1_{g_{\sigma(j)}}),\gamma\left(E_{\text{id}}^\sigma;E_{\text{id}_{k_{\sigma(j)}}}^{\text{id}_{k_{\sigma(j)}}}\right)\right)\\
    &=\left(1_{\gamma(f;\langle g_{\sigma(j)}\rangle)},E_{\text{id}\langle\text{id}_{k_{\sigma(j)}}\rangle}^{\sigma\langle \text{id}_{k_{\sigma (j)}}\rangle}\right)\\
    &=\left(1_{\gamma(f;\langle g_j\rangle)\sigma\langle\text{id}_{k_{\sigma(j)}}\rangle},E_{\text{id}_k}^{\sigma\langle\text{id}_{k_{\sigma(j)}}\rangle}\right)\\
    &=\eta_{\sigma\langle\text{id}_{k_{\sigma(j)}}\rangle;\gamma(f;\langle g_j\rangle)}.
\end{align*}
For Bottom Equivariance, let $c,$ $n,$ $\la b \ra ,$ $k_j$ for $1\leq j\leq n,$ $\la a_j\ra$ for $1\leq j \leq n,$  $f$ and $g_j$ be as above and let $\tau_j\in \Sigma_{k_j}$ for $1\leq j \leq n.$ We also let $k=\sum_{j=1}^{n}k_j.$ Bottom Equivariance \Cref{4.1.12} for $i$ is 
\begin{align*}
    \gamma\left(1_{if};\langle \eta_{\tau_j;g_j}\rangle\right)&=\gamma\left((1_f,1_{\text{id}_n});\langle (1_{g_j\tau_j},E_{\text{id}_{k_j}}^{\tau_j})\rangle\right)\\
    &=\left(\gamma(1_f;1_{g_j\tau_j}),1_{\text{id}_n}\langle E_{\text{id}_{k_j}}^{\tau_j}\rangle\right)\\
    &=\left(1_{\gamma(f;\langle g_j\tau_j\rangle)},E_{\text{id}_k}^{\text{id}_n \langle \tau_j\rangle}\right)\\
    &=\left(1_{\gamma(f;\langle g_j\rangle)\text{id}_n\langle \tau_j\rangle},E_{\text{id}_k}^{\text{id}_n \langle \tau_j\rangle}\right)\\
    &=\eta_{\text{id}\langle\tau_j\rangle,\gamma (f;\langle g_j\rangle)}.
\end{align*}
Thus, we conclude that $\eta\colon\mathcal{M}\to \mathcal{M}\times E\Sigma_*$ is a pseudo symmetric $\textbf{Cat}$-multifunctor.
\end{proof}
Recall that $j\colon \Cat\mbox{-}\mathbf{Multicat}\to\Cat\mbox{-}\mathbf{Multicat^{ps}}$ denotes the inclusion functor. We are ready to present a proof of \ref{mainresult}. 
\begin{theorem}\label{mainprop} Let $\mathcal{M}$ and $\N$ be a $\textbf{Cat}$-multicategories and $F\colon \mathcal{M}\rightarrow \mathcal{N}$ a pseudo symmetric $\Cat$-multifunctor. There exists a unique symmetric $\Cat$-multifunctor $\phi(F)\colon \mathcal{M}\times E\Sigma_*\rightarrow \mathcal{N}$ such that the following diagram commutes: 
\begin{center}    
\begin{tikzcd}
        &\M\times E\Sigma_*\ar[rd,"j\phi(F)"]&\\
        \M\ar[rr,"F"swap]\ar[ru,"\eta_\M"]&&\Nn.\\
    \end{tikzcd}
\end{center} That is, $F=j\phi(F)\circ \eta_\M$ in $\Cat\mbox{-}\mathbf{Multicat^{ps}}$.
\end{theorem}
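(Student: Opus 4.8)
The plan is to define $\phi(F)$ on $\M\times E\Sigma_*$ directly from the data of the pseudo symmetric multifunctor $F$, then check it is a genuine (symmetric) $\Cat$-multifunctor, and finally verify that it is the unique such factorization through $\eta_\M$. On objects, since $\ob(\M\times E\Sigma_*)=\ob(\M)\times\{*\}\cong\ob(\M)$, we are forced to set $\phi(F)(a,*)=Fa$; this is the only choice compatible with $F=\phi(F)\circ\eta_\M$. On hom-categories, for $\la a\ra\in\ob(\M)^n$ and $b\in\ob(\M)$ we have $(\M\times E\Sigma_*)(\la a\ra;b)=\M(\la a\ra;b)\times E\Sigma_n$; a 1-cell here is a pair $(f,\sigma)$ with $f\colon\la a\ra\to b$ and $\sigma\in\Sigma_n$, and a 2-cell is a pair $(\alpha,E_\sigma^\tau)$. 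Since $\eta_\M(f)=(f,\id_n)$, commutativity forces $\phi(F)(f,\id_n)=Ff$; using the symmetric-group action in $\N$, the only consistent definition is $\phi(F)(f,\sigma)=(Ff)\sigma$ on 1-cells. On 2-cells, $(\alpha,E_{\id}^{\id})\mapsto F\alpha$ is forced, and more generally a 2-cell $(\alpha,E_\sigma^\tau)\colon(f,\sigma)\to(g,\tau)$ must go to the composite $(Ff)\sigma\xrightarrow{(F\alpha)\sigma}(Fg)\sigma\xrightarrow{(F_{\tau\sigma^{-1};\,?})\cdots}(Fg)\tau$; the cleanest bookkeeping is to factor $E_\sigma^\tau$ and use the pseudo-symmetry isomorphisms $F_{-}$ together with the product-permutation axiom \cref{4.1.10}, which guarantees this assignment is well defined independent of how one decomposes $E_\sigma^\tau$. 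I would write $\phi(F)(\alpha,E_\sigma^\tau)$ explicitly as $\gamma$-free composite of $(F\alpha)\sigma$ with the component of the pseudo-symmetry natural isomorphism, and then verify functoriality on each hom-category using naturality \cref{4.1.9} and \cref{4.1.10}.

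Next I would check the three axioms making $\phi(F)$ a symmetric $\Cat$-multifunctor: unit preservation, composition preservation (\cref{2axmultif}), and symmetric-group-action preservation. Unit preservation is immediate since $1_{(a,*)}=(1_a,\id_1)\mapsto F(1_a)=1_{Fa}$, using that $F$ preserves units. For the symmetric-group-action axiom, one must show $\phi(F)((f,\sigma)\rho)=\phi(F)(f,\sigma)\rho$, i.e. $(F(f\rho^{?}))\cdots$ matches $((Ff)\sigma)\rho$; unwinding the definition of the $\Sigma_*$-action on $\M\times E\Sigma_*$ (componentwise, so $(f,\sigma)\rho=(f\rho,\sigma\rho)$ with suitable reindexing) this becomes an instance of \cref{4.1.10} plus associativity of the $\Sigma_*$-action in $\N$. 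The composition axiom \cref{2axmultif} is where the top- and bottom-equivariance axioms of $F$, \cref{4.1.11} and \cref{4.1.12}, enter: $\gamma$ on $\M\times E\Sigma_*$ is defined componentwise, so $\gamma((f,\sigma);\la(g_j,\tau_j)\ra)=(\gamma(f;\la g_j\ra),\,\sigma\la\tau_j\ra)$, and one must compare $\phi(F)$ of this with $\gamma(\phi(F)(f,\sigma);\la\phi(F)(g_j,\tau_j)\ra)=\gamma((Ff)\sigma;\la(Fg_j)\tau_j\ra)$; expanding the block permutation $\sigma\la\tau_j\ra$ as $\sigma\la\id\ra$ followed by $\id\la\tau_j\ra$ and applying \cref{4.1.11}, \cref{4.1.12}, the product-permutation axiom, and the fact that $F$ preserves $\gamma$ on the nose yields the equality. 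This should be done once for 1-cells and once for 2-cells, the 2-cell case reducing to the 1-cell case together with naturality.

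Finally, uniqueness and the factorization $F=j\phi(F)\circ\eta_\M$: the latter is built into the construction, since $\phi(F)\circ\eta_\M$ agrees with $F$ on objects, sends $f$ to $\phi(F)(f,\id_n)=(Ff)\id_n=Ff$, sends $\alpha$ to $F\alpha$, and sends the pseudo-symmetry component $\eta_{\sigma;f}=(1_{f\sigma},E_{\id}^\sigma)$ to exactly $F_{\sigma;f}$ (this is the one place to check carefully that the composite defining $\phi(F)$ on the 2-cell $(1_{f\sigma},E_{\id}^\sigma)$ collapses to $F_{\sigma;f}$, which it does since $F(1_{f\sigma})=1_{F(f\sigma)}$). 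Uniqueness follows because any symmetric $\Cat$-multifunctor $H$ with $jH\circ\eta_\M=F$ is pinned down on objects ($H(a,*)=Fa$), on 1-cells of the form $(f,\id_n)$ and on all 2-cells of the form $(\alpha,E_{\id}^{\id})$ by the factorization, and then on $(f,\sigma)=(f,\id_n)\sigma$ and on $(1_{f\sigma},E_{\id}^\sigma)=\eta_{\sigma;f}$ by the requirement that $H$ preserve the $\Sigma_*$-action and (since the 2-cell $E_{\id}^\sigma$ of $E\Sigma_n$ together with the identity 2-cells generate all 2-cells of $E\Sigma_n$ under $\gamma$ and composition) on all of $\M\times E\Sigma_*$; hence $H=\phi(F)$. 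The main obstacle I anticipate is the composition axiom \cref{2axmultif} on 2-cells, where one has to carefully interleave the block-permutation decomposition with the top/bottom equivariance pasting identities and the naturality squares \cref{4.1.9} — the diagram chase is long but each step is forced; everything else is essentially bookkeeping dictated by the demand that $\phi(F)$ factor $F$ through $\eta_\M$.
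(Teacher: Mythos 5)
Your overall architecture matches the paper's: derive the value of $\phi(F)$ from the constraints ($\phi(F)\circ\eta_\M=F$ plus strict equivariance), then verify functoriality via \cref{4.1.9} and \cref{4.1.10}, the composition axiom via \cref{4.1.11} and \cref{4.1.12}, and finally check that the composite recovers the pseudo symmetry isomorphisms $F_{\sigma;f}$. However, there is a genuine error at the heart of the construction: your formula on $1$-cells, $\phi(F)(f,\sigma)=(Ff)\sigma$, is not the forced value and is not even well-typed. Since $Ff\in\Nn(\la Fa\ra;Fb)$, the right action gives $(Ff)\sigma\in\Nn(\la Fa\ra\sigma;Fb)$, whereas the component functor must land in $\Nn(\la Fa\ra;Fb)$. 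The error comes from implicitly writing $(f,\sigma)=(f,\id_n)\sigma$, which contradicts the componentwise $\Sigma_*$-action you yourself state later: $(f,\id_n)\sigma=(f\sigma,\sigma)\neq(f,\sigma)$. The correct decomposition is $(f,\sigma)=(f\sigma^{-1},\id_n)\sigma$, which forces $\phi(F)(f,\sigma)=F(f\sigma^{-1})\sigma$, and likewise $\phi(F)(\alpha,1_\sigma)=F(\alpha\sigma^{-1})\sigma$ and $\phi(F)(1_f,E_\sigma^\tau)=(F_{\tau\sigma^{-1};f\tau^{-1}})\sigma$. This is not cosmetic: with your formula the symmetric-group-action axiom reads $F(f\rho)\,\sigma\rho \overset{?}{=} (Ff)\sigma\rho$, which holds only if $F$ is strictly symmetric—exactly the hypothesis you cannot use—whereas with $F(f\sigma^{-1})\sigma$ the twist in the second coordinate cancels the one in the first, and equivariance holds on the nose \emph{without} invoking any pseudo symmetry axiom (contrary to your claim that \cref{4.1.10} is needed there). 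The same mistake propagates to your uniqueness argument ("on $(f,\sigma)=(f,\id_n)\sigma$") and to the stated domains and codomains of your $2$-cell assignment.

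Once the formula is corrected, the rest of your plan does go through essentially as in the paper: well-definedness of the $2$-cell assignment and hom-functoriality follow from \cref{4.1.9} and \cref{4.1.10}; preservation of $\gamma$ on $1$-cells needs only top and bottom equivariance in $\M$, $\Nn$, $E\Sigma_*$ and the fact that $F$ preserves $\gamma$ (not \cref{4.1.11} and \cref{4.1.12}, which enter only for the two classes of $2$-cells $(1_f,E_\sigma^\tau)$ and $(1_f,1_\sigma)$ composed with $\la 1_{g_j},E_{\rho_j}^{\nu_j}\ra$); and the final check $(\phi(F)\eta_\M)_{\sigma;f}=\phi(F)(1_{f\sigma},E_{\id}^{\sigma})=F_{\sigma;f}$ works as you indicate. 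So the gap is localized but essential: the stated defining formula must be replaced by $F(f\sigma^{-1})\sigma$ before any of the verifications can be carried out.
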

\begin{proof}[Proof of \Cref{mainresult}]
For uniqueness, suppose that  $\phi(F)\colon \mathcal{M}\times E\Sigma_*\to \mathcal{N}$ is a symmetric $\Cat$-multifunctor satisfying $F=(j\phi(F))\circ \eta$. We will abuse notation and write $j\phi(F)=\phi(F).$ We will prove there is a unique way of defining $\phi(F).$ At the level of the objects of the multicategory we must have  $\phi(F)(a,*)=\phi(F)\circ \eta(a)=F(a)$ for each $a\in\ob(\mathcal{M}).$  Next, we show that there is a unique way of defining each component functor of $\phi(F).$ For this let  $\langle a\rangle\in \ob(\mathcal{M})^n,$ $b\in \ob(\mathcal{M}),$ and consider the functor  $\phi(F)\colon \mathcal{M}(\langle a\rangle;b)\times E\Sigma_n\to \mathcal{N}(\langle Fa\rangle;Fb).$ If  $f\colon\la a\ra\to b$ is an 1-cell and $\sigma\in \Sigma_n,$ we must have that
\begin{align}\label{defob}
    \phi(F)(f,\sigma)&=\phi(F)((f\sigma^{-1},\text{id}_n)\sigma)\nonumber\\
    &=\phi(F)((f\sigma^{-1},\text{id}_n))\sigma\nonumber\\
    &=\phi(F)\circ \eta(f\sigma^{-1})\sigma\nonumber\\
    &=F(f\sigma^{-1})\sigma,
\end{align}
where in the second equality we used that $\phi(F)$ is symmetric. So the values of the component functors of $\phi(F)$ on $n$-ary 1-cells are uniquely determined by $F$. In exactly the same fashion, for $\la a\ra,b$ and $\sigma$  as before, $f,g\colon \la a\ra\to b,$ and $\alpha\colon f\to g$ a 2-cell, 
\begin{equation}\label{def1st}
    \phi(F)(\alpha,1_\sigma)=F(\alpha\sigma^{-1})\sigma.
\end{equation}
Finally, if $f,\sigma$ are as before and  $\tau\in \Sigma_n$, we get that
\begin{align}\label{def2nd}
    \phi(F)(1_f,E_{\sigma}^\tau)&=\phi(F)(1_{f\sigma^{-1}}\sigma,E_{\text{id}}^{\tau\sigma^{-1}}\sigma)\nonumber\\
    &= \phi(F)((1_{f\sigma^{-1}},E_{\text{id}}^{\tau\sigma^{-1}}))\sigma\nonumber\\
    &=\phi(F)(\eta_{\tau\sigma^{-1};f\tau^{-1}})\sigma\nonumber\\
    &=(\phi(F)\circ \eta_{\tau\sigma^{-1};f\tau^{-1}})\sigma\nonumber\\
    &=(F_{\tau\sigma^{-1};f\tau^{-1}})\sigma.
\end{align}
We have used the definition of composition of pseudo symmetric $\Cat$-multifunctors \Cref{horcomp} where we see $\phi(F)$ trivially as a pseudo symmetric functor. Since, for $\la a\ra,b,f,g,$ $\alpha,\sigma,$ and $\tau$ as before we can write the morphism $(\alpha\colon f\to g,E_{\sigma}^\tau)$ in $\mathcal{M}(\langle a\rangle;b)\times \Sigma_n$ as $ (1_y,E_\sigma ^\tau)\circ (f,1_{\sigma})$ for and both $\phi(F)(1_y,E_\sigma^\tau)$ and $\phi(F)(f,1_\sigma)$ are uniquely determined by $F$ we conclude that the component functors of $\phi(F)$ are uniquely determined by $F,$ and so we have proven the uniqueness of $\phi(F).$ \\

 Next we prove the existence of $\phi(F)$. By uniqueness, we have no choice but to define $\phi(F)(b,*)=Fb$ for any $b\in\ob(M).$ Likewise, for $\la a\ra\in\ob(\M)^n$ and $b\in \ob(\M),$ uniqueness forces the definition  of the component functor  $\phi(F)\colon \M(\la a\ra;b)\times \Sigma_n\to \N(\la Fa\ra;b).$ For $f\colon \la a\ra\to b$ and $\sigma\in\Sigma_n$ $\phi(F)(f,\sigma)$ is defined by \Cref{defob}, for  $\alpha\colon f\to g$ a 2-cell, we define $\phi(F)(\alpha,1_\sigma)$ by \Cref{def1st} and for $\tau \in \Sigma_n$ we define $\phi(F)(1_f,E_\sigma^\tau)$ by \Cref{def2nd}. First, we notice that for a 1-cell $f\colon \la a\ra\to b$ such definition is ambiguous for the identity arrow $(1_f,1_\sigma)$ since both \Cref{def1st} and \Cref{def2nd} apply. However, $\phi(F)$ is well defined in this case since $F$ is a functor componentwise and so it preserves identities. Explicitly,
 
$$F(1_f\sigma^{-1})\sigma=F(1_{f\sigma^{-1}})\sigma=1_{F(f\sigma^{-1})}\sigma=1_{F(f\sigma^{-1})\sigma},$$

and 

$$(F_{\sigma\sigma^{-1},f\sigma^{-1}})\sigma=F_{\text{id}_n,f\sigma^{-1}}\sigma=1_{F(f\sigma^{-1})}\sigma=1_{F(f\sigma^{-1})\sigma}.$$

So our definition is so far unambiguous and $\phi(F)$ preserves identities. Next, we go on to extend the definition of $\phi(F)$ to the rest of the arrows. For $\alpha\colon f\to g$ 2-cell in $\mathcal{M}(\langle a\rangle,b)$  and $\sigma,\tau$ in $\Sigma_n,$ we define $\phi(F)(\alpha,E_\sigma^\tau)\colon F(f\sigma^{-1})\sigma\to F(g\tau^{-1})\tau$ by 
\begin{align}\label{sq}
    \phi(F)(\alpha,E_\sigma^\tau)=&\phi(F)(1_g,E_\sigma^\tau)\circ \phi(F)(\alpha,1_\sigma)\nonumber\\
    =&\phi(F)(\alpha,1_\tau)\circ \phi(F)(1_f,E_\sigma^\tau).
\end{align}
The last equality together with the preservation of identities already proven implies that our definition is unambiguous. This equality holds since,
\begin{align*}
\phi(F)(1_g,E_\sigma^\tau)\circ \phi(F)(\alpha,1_\sigma)&=\left(F_{\tau\sigma^{-1};g\tau^{-1}}\right)\sigma \circ F(\alpha \sigma^{-1})\sigma\\
&=\left(F_{\tau\sigma^{-1};g\tau^{-1}} \circ F(\alpha \sigma^{-1})\right)\sigma\\
&=\left(F(\alpha\tau^{-1})\tau\sigma^{-1} \circ F_{\tau\sigma^{-1};f\tau^{-1}}\right)\sigma\\
&=F(\alpha\tau^{-1})\tau \circ \left(F_{\tau\sigma^{-1};f\tau^{-1}}\right)\sigma\\
&=\phi(F)(\alpha,1_\tau)\circ \phi(F)(1_f,E_\sigma^\tau).
\end{align*}
The third equality holds since the commutativity of the following diagram is an instance of the pseudo symmetry naturality coherence axiom for $F$ \Cref{4.1.9}. Explicitly, 
\begin{equation}
\begin{tikzcd}
F(f\tau^{-1}\tau\sigma^{-1})\ar[r,"F_{\tau\sigma^{-1};f\tau^{-1}}"]\ar[d,"F(\alpha \tau^{-1}\tau\sigma^{-1})"swap]&[30pt]F(f\tau^{-1})\tau\sigma^{-1}\ar[d,"(F\alpha\tau^{-1})\tau\sigma^{-1}"]\\
F(g\tau^{-1}\tau\sigma^{-1})\ar[r,"F_{\tau\sigma^{-1};g\tau^{-1}}"swap]&F(g\tau^{-1})\tau\sigma^{-1}.
    \end{tikzcd}
\end{equation}
Next, we check that the defined assignments give a functor $\phi(F)\colon \M(\la a\ra;b)\times E\Sigma_n\to \mathcal{N}(\la Fa\ra;b).$  The fact that $\phi(F)$ preserves identities was already proven. We prove functoriality in the first variable first.  For $f\colon\la a\ra\to b$  1-cell, $\sigma,\tau ,$ and $\rho$ in $\Sigma_n$,  \begin{align}\label{tr1}
   \phi(F)(1_f,E_\tau^\rho)\circ \phi(F)(1_f,E_\sigma^\tau)&=\left(F_{\rho\tau^{-1};f\rho^{-1}}\tau\right) \circ \left(F_{\tau\sigma^{-1};f\tau^{-1}}\sigma\right)\nonumber\\
    &=\left(\left(F_{\rho\tau^{-1};f\rho^{-1}}\right)\tau\sigma^{-1} \circ F_{\tau\sigma^{-1};f\tau^{-1}}\right)\sigma\nonumber\\
    &=\left(F_{\rho\sigma^{-1};f\rho^{-1}}\right)\sigma\nonumber\\
    &=\phi(F)(1_f,E_\sigma^\rho).
\end{align}
Here the third equality holds by \Cref{4.1.10}, which implies the commutativity of the following diagram:
\begin{equation}\label{product}
       \begin{tikzcd}
       &F(f\rho^{-1}\rho\tau^{-1})\tau\sigma^{-1}\ar[rd,"(F_{\rho\tau^{-1};f\rho^{-1}})\tau\sigma^{-1}"]&\\
F(f\rho^{-1}\rho\tau^{-1}\tau\sigma^{-1})\ar[ru,"F_{\tau\sigma^{-1};f\rho^{-1}\rho\tau^{-1}}"]\ar[rr,"F_{\rho\tau^{-1}\tau\sigma^{-1};f\rho^{-1}}"']&&F(f\rho^{-1})\rho\tau^{-1}\tau\sigma^{-1}.
    \end{tikzcd} 
    \end{equation}
On the other hand, if $\alpha\colon f\to g$ and $\beta\colon g\to h$ are 2-cells in $\mathcal{M}(\langle a\rangle;b),$ and $\sigma\in \Sigma_n$ we have that
\begin{align}\label{tr2}
    \phi(F)(\beta,1_\sigma)\circ \phi(F)(\alpha,1_\sigma)=\phi(F)(\beta\alpha,1_\sigma).
\end{align}
The functoriality of $\phi(F)$ follows from a straightforward argument by  \cref{tr1,tr2} together with the exchange property \Cref{sq}.\\

The next step is to prove that the component functors give rise to a symmetric $\textbf{Cat}$-multifunctor $\phi(F)\colon \mathcal{M}\times E\Sigma_*\to \mathcal{N}.$ First, notice that $\phi(F)$ preserves units since, for $a\in \ob(\mathcal{M})$ $\phi(F)(1_a,\text{id}_1)=F(1_a\text{id}_1^{-1})\text{id}_1=F(1_a)=1_{Fa},$ since $F$ itself preserves units.  Next we prove that $\phi(F)$ preserves the $\Sigma_n$-action. For $n\geq 0,$  $\la a\ra\in\ob(\M)^n,$ $b\in \ob(\M),$ and $\sigma\in \Sigma_n$,  we show that the following diagram commutes in $\textbf{Cat}:$
\begin{center}
\begin{tabular}{c}
    \xymatrix{\mathcal{M}(\langle a\rangle;b)\times E\Sigma_n\ar^{\phi(F)}[r]\ar[d]_{\sigma}&\mathcal{N}(\langle Fa\rangle;Fb)\ar[d]^{\sigma}\\
   \mathcal{M}(\langle a_j\rangle\sigma;b)\times E\Sigma_n\ar[r]_{\phi(F)}&\mathcal{N}(\langle Fa\rangle\sigma;Fb).}\\
\end{tabular}
\end{center}
For this we don't need any of the pseudo symmetry axioms for $F.$ For 1-cells $(f\colon \la a\ra\to b,\tau)$ of $\mathcal{M}(\langle a\rangle;b)\times E\Sigma_n,$
\begin{align*}
    \phi(F)(f,\tau)\sigma&=(F(f\tau^{-1})\tau)\sigma\\
    &=F(f\tau^{-1})\tau\sigma\\
    &=F(f\sigma(\tau \sigma)^{-1})\tau\sigma\\
    &=\phi(F)((f\sigma,\tau\sigma)))\\
    &=\phi(F)((f,\tau)\sigma).
\end{align*}
A similar calculation works for 2-cells of the form $(\alpha\colon f\to g,1_\tau)$ in $ \mathcal{M}(\langle a\rangle;b)\times E\Sigma_n.$
For morphisms of the form $(1_f,E_\tau^\rho)$ in $\mathcal{M}(\langle a\rangle;b)\times E\Sigma_n,$
\begin{align*}
    (\phi(F)(1_f,E_\tau^\rho))\sigma&=(F_{\rho\tau^{-1};f\rho^{-1}}\tau)\sigma\\
    &=F_{\rho\tau^{-1};f\rho^{-1}}(\tau\sigma)\\
    &=F_{\rho\sigma(\tau\sigma)^{-1};f\sigma(\rho\sigma)^{-1}}(\tau\sigma)\\
    &=\phi(F)(1_{f\sigma},E_{\tau\sigma}^{\rho\sigma})\\
    &=\phi(F)((1_f,E_\tau^\rho)\sigma).
\end{align*}
By functoriality of $\phi(F)$ and $\sigma$ we conclude that $\phi(F)$ preserves the action of the symmetric group. \\

The only step we are missing to finish proving that $\phi(F)$ defines a $\textbf{Cat}$-multifunctor is the preservation of $\gamma.$ Let $c\in\ob(\M),$ $n\geq 0,$ $\la b\ra\in\ob(\M)^{n},$ $k_j\geq 0$ for $1\leq j\leq n,$ $\la a_j\ra=\la a_{j,i}\ra_{i=1}^{k_j} $ for $1\leq j \leq n.$ Set $k=\sum_{j=1}^nk_j.$ As usual $\la a\ra=\la a_j\ra=\la\la a_{j,i}\ra_{i=1}^{k_j}\ra_{j=1}^n$ denotes the concatenation of the $a_j$'s. We will prove that the following square is commutative:

\begin{equation}\label{cuadrado}
    \begin{tikzcd}[font=\tiny]
        \mathcal{M}(\langle b\rangle;c)\times E\Sigma_n\times \prod_{j=1}^n\mathcal{M}(\langle a_{j}\rangle;b_j)\times E\Sigma_{k_j}\ar[r,"\phi(F)\times\prod \phi(F)"]\ar[d,"\gamma"]&\mathcal{N}(\langle Fb\rangle;Fc)\times\prod_{j=1}^n\mathcal{N}(\langle F a_{j}\rangle;Fb_j)\ar[d,"\gamma"]\\
    \mathcal{M}(\langle a\rangle;c)\times E(\Sigma_k)\ar[r,"\phi(F)"swap]&\mathcal{N}(\langle F a\rangle;Fc).
    \end{tikzcd}
\end{equation}

The commutativity of this diagram at the level of 1-cells will follow from top and bottom equivariance for $\mathcal{M}$ and $\Sigma_*,$ as well as the fact that $F$ preserves $\gamma.$ Let $f\colon\la b\ra\to c,$ $\sigma\in\Sigma_n,$ and $g_j\colon \la a_j\ra\to b_j$ and $\tau_j\in\Sigma_{k_j}$ for $1\leq j \leq n.$  We have that
\begin{align*}
    \gamma(\phi(F)(f,\sigma),\langle \phi(F)(g_j,\tau_j) \rangle)&=
    \gamma(F(f\sigma^{-1})\sigma,\langle F(g_j\tau_j^{-1})\tau_j\rangle)\\
    &=\gamma\left((F(f\sigma^{-1}),\Bigl\langle F\left(g_{\sigma^{-1}(j)}\tau_{\sigma^{-1}(j)}^{-1}\right)\Bigr\rangle\right)\sigma\langle \tau_j\rangle\\
    &=F\left(\gamma\left(f\sigma^{-1},\Bigl\langle g_{\sigma^{-1}(j)}\tau_{\sigma^{-1}(j)}^{-1}\Bigr\rangle\right)\right)\sigma\langle \tau_j\rangle\\
    &=F\left(\gamma(f,\langle g_{j}\rangle)(\sigma\langle \tau_j\rangle)^{-1}\right)\sigma\langle \tau_j\rangle\\
    &=\phi(F)(\gamma(f,\langle g_j\rangle),\sigma\langle \tau_j\rangle)\\
    &=\phi(F)(\gamma((f,\sigma),\langle g_j,\tau_j\rangle)).
\end{align*}
We have proven that our diagram is commutative at the level of $1$-cells. For the morphisms we will consider again morphisms that change the first variable only and morphisms that change the second variable  only separately.\\

For 2-cells that change the first variable only, the commutativity of our diagram follows in the same way as it did for 1-cells. We consider two cases for 2-cells that change the second variable. For 2-cells of the form $((1_f,E_\sigma^\tau),\la 1_{g_j},1_{\rho_j}\ra)$ where  $f\colon\langle b\rangle\to c,$ $\sigma,\tau \in \Sigma_n,$ and   $g_j\in\ob(\mathcal{M}(\langle a_j\rangle;b_j))$ and $\rho_j\in \Sigma_{k_j}$ for $1\leq j\leq n$, we have that 
\begin{align*}
    &\gamma\left(\phi(F)(1_f,E_\sigma^\tau)\Bigl\langle \phi(F)(1_{g_j},1_{\rho_j})\Bigr\rangle\right)\\
    =&\gamma\left((F_{\tau\sigma^{-1};f\tau^{-1}})\sigma,\Bigl\langle 1_{F({g_j}\rho^{-1}_j)\rho_j}\Bigr\rangle\right)\\
=&\gamma\left(F_{\tau\sigma^{-1};f\tau^{-1}},\Bigl\langle 1_{F\left(g_{\sigma^{-1}(j)}\rho^{-1}_{\sigma^{-1}(j)}\right)}\Bigr\ra\right)\sigma\langle\rho_j \rangle\\
=&F_{\tau\sigma^{-1}\bigl\langle \id_{k_{\sigma^{-1}(j)}}\bigr\rangle;\gamma\left(f\tau^{-1}\bigl\la g_{\tau^{-1}(j)}\rho^{-1}_{\tau^{-1}(j)}\bigr\ra\right)}\sigma\langle \rho_j\rangle\\
=&F_{\tau\la \rho_j\ra(\sigma\la\rho_j\ra)^{-1};\gamma(f,\la g_j\ra)(\tau\la\rho_j\ra)^{-1}}\sigma\la\rho_j\ra\\
=&\phi(F)(1_{\gamma(f,\la g_j\ra)},E_{\sigma\la \rho_j\ra}^{\tau\la\rho_j \ra})\\
=&\phi(F)(\gamma(1_f,\la 1_{g_j}\ra),\gamma(E_\sigma^\tau,\la 1_{\rho_j}\ra)).
\end{align*}
The above equalities follow from our definitions, top and bottom equivariance in $\M,\N ,$ and $E\Sigma_*$ except the third equality which follows from top equivariance for $F$  \Cref{4.1.11}. Next, let's consider two cells of the form $((1_f,1_\sigma),\la 1_{g_j},E_{\rho_j}^{\nu_j}\ra)$ where $f\colon\langle b\rangle\to c,$ $\sigma \in \Sigma_n,$ and   $g_j\in\ob(\mathcal{M}(\langle a_j\rangle;b_j))$ and $\rho_j,\nu_j\in \Sigma_{k_j}$ for $1\leq j\leq n$. We get that \\
\begin{align*}
    &\gamma\left(\phi(F)(1_f,1_\sigma),\phi(F)\bigl\la(1_{h_j},E_{\rho_j}^{\nu_j})\bigr\ra\right)\\
    =&\gamma\left(1_{F(f\sigma^{-1})\sigma},\left( F_{\nu_j\rho_j^{-1};g_j\nu_j^{-1}}\right)\rho_j\right)\\
    =&\gamma\left(1_{F(f\sigma^{-1})},\Bigl\la F_{\nu_{\sigma^{-1}(j)}\rho^{-1}_{\sigma^{-1}(j)};g_{\sigma^{-1}(j)}\nu^{-1}_{\sigma^{-1}(j)}}\Bigr\ra\right)\sigma\la \rho_j\ra\\
    =&F_{\id_n\bigl\la \nu_{\sigma^{-1}(j)}\rho^{-1}_{\sigma^{-1}(j)}\bigr\ra;\gamma\left(f\sigma^{-1},\bigl\la g_{\sigma^{-1}(j)}\nu^{-1}_{\sigma^{-1}(j)}\bigr\ra\right)}\sigma\la \rho_j\ra\\
    =&F_{\sigma\la \nu_j\ra(\sigma\la \rho_j\ra)^{-1};\gamma(f,\la g_j\ra)(\rho\la \nu_j\ra)^{-1}}\sigma\la \rho_j\ra\\
    =&\phi(F)\left(1_{\gamma(f,\la g_j\ra)},\Bigl\la E_{\sigma\la \rho_j\ra}^{\sigma\la \nu_j\ra}\Bigr\ra\right)\\
    =&\phi(F)\left(\gamma\left((1_f,1_\sigma),\Bigl\la \left(1_{g_j},E_{\rho_j}^{\nu_j}\right)\Bigr\ra\right) \right).
\end{align*}
The third equality above follows from the bottom equivariance axiom for $F$ \Cref{4.1.12} and the rest by our definitions as well as top and bottom equivariance for $\M,\Nn ,$ and $E\Sigma_*.$\\

By functoriality of $\gamma$ and $\phi(F)$, and since every morphism in the source category can be written as a composite of arrows for which we already proved the commutativity of \Cref{cuadrado}, we can conclude that the square \Cref{cuadrado} is commutative.\\

We are almost done, we just have to prove that our definition of $\phi(F)$ gives us $F=\phi(F)\circ \eta$ in $\mathbf{Cat}\mbox{-}\mathbf{Multicat^{ps}}.$ This is clear for objects of the multicategory $\M$. For each $n\geq 0,$ $ \la a\ra\in\ob(\M)^n,$ $b\in\ob(\mathcal{M}),$ and $f\colon \la a\ra\to b,$
$$\phi(F)\circ \eta (f)=\phi(F)(f,\text{id}_n)=F(f\text{id}_n^{-1})\text{id}_n=F(f).$$ 
Similarly for $\alpha\colon f\to g$ a 2-cell in $\mathcal{M}(\langle a\rangle;b).$  Finally, we just need to prove that $(\phi(F) \circ i)_{{\sigma,\langle a_i\rangle},b}=F_{\sigma,\langle a_i\rangle},c$ for any $\sigma \in \Sigma_n.$  Let $f\colon \la a\ra\to b$ be a 1-cell. Since $\phi(F)$ is symmetric, 
$$(\phi(F) \eta)_{\sigma;f}=\phi(F)(\eta_{\sigma;f})=\phi(F)(1_{f\sigma},E_{\text{id}^\sigma})=F_{\sigma(\text{id})^{-1};f\sigma\sigma^{-1}}=F_{\sigma;f}$$
We have proven that $j\phi(F)\circ \eta=F$. This finishes our proof.
\end{proof}
Similarly, pseudo symmetric \textbf{Cat}-multinatural transformations between $F$ and $G$ correspond to symmetric \textbf{Cat}-multinatural transformations between $\phi(F)$ and $\phi(G)$.

\begin{lem}\label{2celllemma} Let $\mathcal{M},\mathcal{N}$ be $\mathbf{Cat}$-multicategories with $F,G\colon \mathcal{M}\to \mathcal{N}$ pseudo symmetric $\mathbf{Cat}$-multifunctors and $\theta\colon F\to G$ a pseudo symmetric $\Cat$-multinatural transformation. There exists a unique symmetric $\Cat$-multinatural transformation $\phi(\theta)\colon\phi(F)\to\phi(G)$ such that $\phi(\theta)* 1_{\eta_\M}= \theta$ in $\Cat\mbox{-}\mathbf{Multicat^{ps}}.$ That is, the following pasting diagram equality holds in $\Cat\mbox{-}\mathbf{Multicat^{ps}}:$
\begin{center}
    \begin{tikzcd}
        \M\arrow[rr,bend left=20,"F"{name=F}]\arrow[rr,bend right=20,"G"{swap,name=G}]\ar[rdd,"\eta_\M"']&[-15pt]&[-15pt]\Nn&[-15pt]&[-15pt]\M\ar[rr,"F"]\ar[rdd,"\eta_\M"']&[-15pt]&[-15pt]\N\\[-15pt]
        &&&=&&&\\[-15pt]
        &\M\times E\Sigma_*\ar[ruu,"\phi(G)"']&&&&\M \times E\Sigma_*. \ar[ruu,bend left=18,near start,"\phi(F)"{name=S}]\ar[ruu,bend right=22,"\phi(G)"{swap,name=L}]&
        \arrow[Rightarrow, from=F, to=G,shorten <=2mm,shorten >=1mm,"\theta"]
        \arrow[Rightarrow, from=S, to=L,shorten <=2mm,shorten >=4mm,"\phi(\theta)"']
    \end{tikzcd}
\end{center}
\end{lem}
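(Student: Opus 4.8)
The plan is to show first that the equation $\phi(\theta)*1_{\eta_\M}=\theta$ determines $\phi(\theta)$ uniquely, and then to verify that the forced candidate is genuinely a symmetric $\Cat$-multinatural transformation. Since $E\Sigma_*$ has a single object, every object of $\M\times E\Sigma_*$ has the form $(a,*)$ with $a\in\ob(\M)$, so the data of a symmetric $\Cat$-multinatural transformation $\phi(\theta)\colon\phi(F)\to\phi(G)$ is just a family of $1$-ary $1$-cells $\phi(\theta)_{(a,*)}\colon Fa\to Ga$. Unwinding the horizontal composition formula \Cref{hor}, the component of $\phi(\theta)*1_{\eta_\M}$ at $a$ is $\gamma\bigl(\phi(\theta)_{(a,*)};\phi(F)((1_{\eta_\M})_a)\bigr)$; since $(1_{\eta_\M})_a$ is the $(a,*)$-unit of $\M\times E\Sigma_*$ and $\phi(F)$ preserves units, this equals $\gamma\bigl(\phi(\theta)_{(a,*)};1_{Fa}\bigr)=\phi(\theta)_{(a,*)}$ by the right unity axiom in $\N$. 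Hence $\phi(\theta)*1_{\eta_\M}=\theta$ holds if and only if $\phi(\theta)_{(a,*)}=\theta_a$ for all $a$: this proves uniqueness and reduces existence to checking that the family $(\theta_a)_{a\in\ob(\M)}$ satisfies the multinaturality axioms \Cref{trans1} and \Cref{trans2} for $\phi(F),\phi(G)\colon\M\times E\Sigma_*\to\N$.

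For the $1$-cell axiom \Cref{trans1}, recall that an $n$-ary $1$-cell of $\M\times E\Sigma_*$ is a pair $(f,\sigma)$ with $f\colon\la a\ra\to b$ in $\M$ and $\sigma\in\Sigma_n$, and that $\phi(F)(f,\sigma)=F(f\sigma^{-1})\sigma$ and $\phi(G)(f,\sigma)=G(f\sigma^{-1})\sigma$ by \Cref{defob}. Thus the identity to prove is $\gamma\bigl(G(f\sigma^{-1})\sigma;\la\theta_{a_j}\ra\bigr)=\gamma\bigl(\theta_b;F(f\sigma^{-1})\sigma\bigr)$, and I would establish it in three steps: top equivariance \Cref{top} in $\N$ rewrites the left-hand side as $\gamma\bigl(G(f\sigma^{-1});\la\theta_{a_{\sigma^{-1}(j)}}\ra\bigr)\sigma$ (the $\theta_{a_j}$ are $1$-ary, so the block permutation $\sigma\la\id_1,\dots,\id_1\ra$ is just $\sigma$); the ordinary multinaturality of $\theta$ for the $1$-cell $f\sigma^{-1}\colon\la a\ra\sigma^{-1}\to b$ turns this into $\gamma\bigl(\theta_b;F(f\sigma^{-1})\bigr)\sigma$; and bottom equivariance \Cref{bottom} with $n=1$ turns that into $\gamma\bigl(\theta_b;F(f\sigma^{-1})\sigma\bigr)$.

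For the $2$-cell axiom \Cref{trans2} it suffices, by functoriality of $\gamma$, $\phi(F)$ and $\phi(G)$, to treat the generating $2$-cells $(\alpha,1_\sigma)$ and $(1_f,E_\sigma^\tau)$. The case $(\alpha,1_\sigma)$, where $\phi(F)(\alpha,1_\sigma)=F(\alpha\sigma^{-1})\sigma$ by \Cref{def1st}, goes exactly as the $1$-cell case with $\alpha\sigma^{-1}$ replacing $f\sigma^{-1}$ and \Cref{trans2} for $\theta$ replacing \Cref{trans1}. The case $(1_f,E_\sigma^\tau)$, where $\phi(F)(1_f,E_\sigma^\tau)=(F_{\tau\sigma^{-1};f\tau^{-1}})\sigma$ and $\phi(G)(1_f,E_\sigma^\tau)=(G_{\tau\sigma^{-1};f\tau^{-1}})\sigma$ by \Cref{def2nd}, comes down to the identity $\gamma\bigl((G_{\tau\sigma^{-1};f\tau^{-1}})\sigma;\la1_{\theta_{a_j}}\ra\bigr)=\gamma\bigl(1_{\theta_b};(F_{\tau\sigma^{-1};f\tau^{-1}})\sigma\bigr)$: top equivariance again pulls the $\sigma$ out on the left to give $\gamma\bigl(G_{\tau\sigma^{-1};f\tau^{-1}};\la1_{\theta_{a_{\sigma^{-1}(j)}}}\ra\bigr)\sigma$, bottom equivariance with $n=1$ reabsorbs it on the right, and the middle step---replacing $\gamma\bigl(G_{\tau\sigma^{-1};f\tau^{-1}};\la1_{\theta_{a_{\sigma^{-1}(j)}}}\ra\bigr)$ by $\gamma\bigl(1_{\theta_b};F_{\tau\sigma^{-1};f\tau^{-1}}\bigr)$---is precisely the extra compatibility axiom for a pseudo symmetric $\Cat$-multinatural transformation, applied to $\theta$ at the $1$-cell $f\tau^{-1}$ and the permutation $\tau\sigma^{-1}$ (one uses $(\la a\ra\tau^{-1})_{(\tau\sigma^{-1})(j)}=a_{\sigma^{-1}(j)}$ to line up the indices). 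This last case is where I expect the only real difficulty: it is the unique place that genuinely uses the axiom separating a \emph{pseudo} symmetric multinatural transformation from an ordinary one, and the bookkeeping---reconciling the permutations in the formula $(F_{\tau\sigma^{-1};f\tau^{-1}})\sigma$ for $\phi(F)$ with those in the statement of that axiom and tracking the equivariance moves in $\N$---is the delicate part; the rest is formal. Once \Cref{trans1} and \Cref{trans2} hold, $\phi(\theta)$ is a symmetric $\Cat$-multinatural transformation, and $\phi(\theta)*1_{\eta_\M}=\theta$ follows from the first paragraph, finishing the proof.
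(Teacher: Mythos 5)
Your proposal is correct and follows essentially the same route as the paper: uniqueness by evaluating the whiskering formula at objects (forcing $\phi(\theta)_{(a,*)}=\theta_a$), then existence by checking multinaturality on the generating $1$-cells $(f,\sigma)$ and $2$-cells $(\alpha,1_\sigma)$, $(1_f,E_\sigma^\tau)$ via top/bottom equivariance in $\N$, the ordinary multinaturality of $\theta$, and---exactly as in the paper---the extra pseudo symmetry axiom of $\theta$ at $f\tau^{-1}$ and $\tau\sigma^{-1}$ for the $E\Sigma_n$-direction, concluding by functoriality. Your index bookkeeping $(\la a\ra\tau^{-1})_{\tau\sigma^{-1}(j)}=a_{\sigma^{-1}(j)}$ matches the paper's computation, so no gap remains.
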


\begin{proof}
We prove uniqueness first. Suppose $\phi(\theta)$ is a symmetric \textbf{Cat}-multinatural transformation $\phi(\theta)\colon \phi(F)\to\phi(G)$ such that $\phi(\theta)*1_{\eta}=\theta$. Any object of $\mathcal{M}\times E\Sigma_*$ takes the form $(a,*)$ for some object $a$ of $\mathcal{M},$ with $i(a)=(a,*).$  By definition,
$$\theta_a=\gamma(\phi(\theta)_{\eta a},\phi(F)((1_\eta)_a))=\gamma(\phi(\theta)_{\eta a},1_{Fa}))=\phi(\theta)_{\eta a}.$$
Since all objects of the $\Cat$-multifunctor $\mathcal{M}\times E\Sigma_*$ are of the form $\eta a$ for some object $a$ of $\mathcal{M},$ this is the only possible way of defining such \textbf{Cat}-multinatural transformation $\phi(\theta).$ Next, we check that by defining $\phi(\theta)_{(a,*)}=\theta_a$ for $a\in\ob(\M),$ we in fact get a symmetric \textbf{Cat}-multinatural transformation $\phi(\theta)\colon \phi(F)\to\phi(G).$ Let $n\geq 0,$ $\langle a\rangle\in\ob(\M)^n,$ $b\in(\ob(\M)^n),$  $f\colon\la a\ra\to b,$ and $\sigma\in \Sigma_n,$  then 
\begin{align*}
    \gamma(\phi(G)(f,\sigma);\langle \phi(\theta)_{(a_j,*)}\rangle)&=\gamma\left(G(f\sigma^{-1})\sigma;\bigl\langle \theta_{a_j}\bigr\rangle\right)\\
    &=\gamma\left(G(f\sigma^{-1});\Bigl\langle\theta_{a_{\sigma^{-1}(j)}}\Bigr\rangle\right)\sigma\\
    &=\gamma(\theta_{b};F(f\sigma^{-1}))\sigma\\
    &=\gamma(\theta_{b};F(f\sigma^{-1})\sigma)\\
    &=\gamma\left(\phi(\theta)_{(b,*)},\phi(F)(f,\sigma)\right)
\end{align*}
Where we have used top and bottom equivariance, as well as the $\Cat$-multinaturality of $\theta$. Now we need to prove $\Cat$-multinaturality of $\phi(\theta)$ for 2-cells. As before, the case where the 2-cell changes just the first variable is similar to what was done for 1-cells.
Now, if $\langle a\rangle,b,f$ are as before and $E_\sigma^\tau$ is a morphism in $E\Sigma_n,$  $(1_f,E_\sigma^\tau)$ is a morphism in $\mathcal{M}(\langle a\rangle;b)\times E\Sigma_n ,$ and
\begin{align*}
    \gamma\left(\phi(G)(1_f,E_\sigma^\tau);\Bigl\la 1_{\phi(\theta)_{(a_j,*)}}\Bigr\ra\right)&=\gamma\left((G_{\tau\sigma^{-1};f\tau^{-1}})\sigma;\langle 1_{\theta_{a_j}}\rangle\right)\\
    &=\gamma\left(G_{\tau\sigma^{-1};f\tau^{-1}};\Bigl\langle 1_{\theta_{a_{\sigma^{-1}(j)}}}\Bigr \rangle\right)\sigma\\
    &=\gamma\left(1_{\theta_{b}};F_{\tau\sigma^{-1};f\tau^{-1}}\right)\sigma\\
    &=\gamma\left( 1_{\phi(\theta)_{(b,*)}};\phi(F)(1_f,E_\sigma^\tau)\right).
\end{align*}
In the third equality we have used pseudo symmetric \textbf{Cat}-multinaturality for $\theta.$ In conclusion, by componentwise functoriality of $\gamma,\phi(F)$ and $\phi(G)$ we conclude that \textbf{Cat}-multinaturality holds for $\phi(\theta)$ at the 2-cell level finishing the proof of the lemma. 
\end{proof}
Furthermore, \Cref{mainprop} and \Cref{2celllemma} together give the following isomorphism.
\begin{cor}\label{phi} If $\mathcal{M},\mathcal{N}$ are $\mathbf{Cat}$ multicategories, then there is an isomorphism of small categories
$$\mathbf{Cat}\mbox{-}\mathbf{Multicat^{ps}}(\mathcal{M},\mathcal{N})\cong \mathbf{Cat}\mbox{-}\mathbf{Multicat}(\mathcal{M}\times E\Sigma_*,\mathcal{N}).$$
\end{cor}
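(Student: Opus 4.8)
The plan is to package the assignments furnished by \Cref{mainprop} and \Cref{2celllemma} into a functor and to exhibit an explicit inverse. Define $\phi\colon \Cat\mbox{-}\mathbf{Multicat^{ps}}(\M,\N)\to\Cat\mbox{-}\mathbf{Multicat}(\M\times E\Sigma_*,\N)$ on objects by $F\mapsto\phi(F)$ as in \Cref{mainprop}, and on a pseudo symmetric multinatural transformation $\theta\colon F\Rightarrow G$ by $\theta\mapsto\phi(\theta)\colon\phi(F)\Rightarrow\phi(G)$ as in \Cref{2celllemma}; since $\theta$ determines its source $F$ and target $G$, this is unambiguous. In the reverse direction, let $\Psi$ send a symmetric multifunctor $H\colon\M\times E\Sigma_*\to\N$ to $jH\circ\eta_\M$, and a symmetric multinatural transformation $\beta\colon H\Rightarrow H'$ to $j\beta*1_{\eta_\M}$. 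Since $\eta_\M$ is pseudo symmetric (by the lemma preceding \Cref{mainprop}) and $j$ is a $2$-functor, $\Psi$ takes values in $\Cat\mbox{-}\mathbf{Multicat^{ps}}(\M,\N)$; moreover $\Psi$ is a functor, being the composite of $j$ with whiskering by the fixed $1$-cell $\eta_\M$, both of which are functorial.

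First I would check that $\phi$ is a functor, i.e.\ that it preserves identity $2$-cells and vertical composition. Both facts follow from the \emph{uniqueness} clauses of \Cref{mainprop} and \Cref{2celllemma}. For identities, $1_{\phi(F)}*1_{\eta_\M}=1_{\phi(F)\circ\eta_\M}=1_F$, so $1_{\phi(F)}$ enjoys the property that characterizes $\phi(1_F)$ uniquely, whence $\phi(1_F)=1_{\phi(F)}$. For vertical composition, given $\theta\colon F\Rightarrow G$ and $\zeta\colon G\Rightarrow H$, functoriality of whiskering by $\eta_\M$ (a case of the interchange law) gives $(\phi(\zeta)\phi(\theta))*1_{\eta_\M}=(\phi(\zeta)*1_{\eta_\M})(\phi(\theta)*1_{\eta_\M})=\zeta\theta$, so $\phi(\zeta)\phi(\theta)$ satisfies the defining property of $\phi(\zeta\theta)$, and hence $\phi(\zeta\theta)=\phi(\zeta)\phi(\theta)$. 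Being merely a functor between hom-categories, $\phi$ need not interact with horizontal composition, which keeps this step short.

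Next I would verify that $\Psi$ and $\phi$ are mutually inverse. The equation $\Psi\phi=\id$ is immediate from the defining properties: on objects $\Psi\phi(F)=j\phi(F)\circ\eta_\M=F$ by \Cref{mainprop}, and on $2$-cells $\Psi\phi(\theta)=\phi(\theta)*1_{\eta_\M}=\theta$ by \Cref{2celllemma}. For $\phi\Psi=\id$, fix a symmetric multifunctor $H$; then $\Psi(H)=jH\circ\eta_\M$ is pseudo symmetric, and by construction $\phi(\Psi(H))$ is the unique symmetric multifunctor $K$ with $jK\circ\eta_\M=\Psi(H)$, a property that $H$ itself has, so uniqueness in \Cref{mainprop} forces $\phi(\Psi(H))=H$. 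The identical argument using the uniqueness clause of \Cref{2celllemma} gives $\phi(\Psi(\beta))=\beta$ for every symmetric multinatural transformation $\beta$. This shows $\phi$ and $\Psi$ are inverse functors, yielding the asserted isomorphism of categories.

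Since the substantive work is already carried out in \Cref{mainprop} and \Cref{2celllemma}, the remaining argument is essentially formal; the only points requiring attention are bookkeeping with the inclusion $j$ (so that all composites and whiskerings are read in $\Cat\mbox{-}\mathbf{Multicat^{ps}}$) and the observation that $\phi$, as a functor of hom-categories, need only respect identities and vertical composition. I expect no genuine obstacle here. One could additionally record that the isomorphism is natural in $\M$ and $\N$, which is precisely what promotes this corollary to the $2$-adjunction of \Cref{adjunction}, but that lies beyond the present statement.
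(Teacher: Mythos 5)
Your proposal is correct and follows essentially the same route as the paper: $\phi$ is built from \Cref{mainprop} and \Cref{2celllemma}, the inverse is precomposition with $\eta_\M$ after applying $j$, and the two composites are identified using the existence and uniqueness clauses respectively. The only (harmless) difference is that you deduce functoriality of $\phi$ from uniqueness plus the interchange law, whereas the paper checks it by a direct componentwise computation.
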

\begin{proof}
Recalling the definitions from the two previous results, we define \begin{equation}\label{phimn}
    \phi\colon \mathbf{Cat}\mbox{-}\mathbf{Multicat^{ps}}(\mathcal{M},\mathcal{N})\to \mathbf{Cat}\mbox{-}\mathbf{Multicat}(\mathcal{M}\times E\Sigma_*,\mathcal{M})
\end{equation} 
for pseudo symmetric $\Cat$-multifunctors as in \Cref{mainprop} and for pseudo symmetric $\textbf{Cat}$-multinatural transformations as in \Cref{2celllemma}.\\

It is immediate from the definitions that $\phi$ is a functor. Indeed, if $\alpha\colon F\to G$ and $\beta\colon G\to H$ are pseudo symmetric \textbf{Cat}-multinatural transformations with $F,G,H\colon \mathcal{M}\to \mathcal{N}$
$$\phi(\beta * \alpha)_{(c,*)}=(\beta *\alpha)_c=\gamma(\beta_c,\alpha_c)=\gamma(\phi(\beta)_{(c,*)},\phi(\alpha)_{(c,*)})=(\phi(\beta)*\phi(\alpha))_{(c,*)}$$
We can define the inverse of $\phi,$ $\eta^*,$ as the composite
\begin{equation}\label{etamn}
    \begin{tikzcd}
        \Cat\mbox{-}\mathbf{Multicat}(\M\times E\Sigma_*,\N)\ar[rd,"\eta^*"']\ar[r,"j"]&\Cat\mbox{-}\mathbf{Multicat^{ps}}(\M\times E\Sigma_*,\N)\ar[d,"\eta_{\M}^*"]\\
        &\Cat\mbox{-}\mathbf{Multicat^{ps}}(\M,\N).
    \end{tikzcd}
\end{equation}
Finally, the existence part of \Cref{mainprop} and \Cref{2celllemma}, implies that $\eta^*\circ \phi$ is the identity of $\Cat\mbox{-}\mathbf{Multicat^{ps}}(\M,\Nn),$ while the uniqueness part of both results implies that $\phi\circ \eta^*$ is the identity of $\Cat\mbox{-}\mathbf{Multicat}(\M\times E\Sigma_*,\Nn).$
\end{proof}
The two previous results hint at the existence of a 2-adjunction between the 2-inclusion $j\colon \Cat\mbox{-}\mathbf{Multicat}\to \Cat\mbox{-}\mathbf{Multicat^{ps}}$ and the 2-functor which we define next.
\begin{defi}
We define the 2-functor $\psi\colon \Cat\mbox{-}\mathbf{Multicat^{ps}}\to \Cat\mbox{-}\mathbf{Multicat}$ as follows. For a $\Cat$-multicategory $\M,$ $\psi\M=\M\times E\Sigma_*.$ For $\M,\Nn$ $\Cat$-multicategories, we define the component functor $\psi$ as the composite
\begin{center}
    \begin{tikzcd}
        \Cat\mbox{-}\mathbf{Multicat^{ps}}(\M,\Nn)\ar[r,"{\eta_{\mathcal{N}}}_*"]\ar[rd,"\psi"']&\Cat\mbox{-}\mathbf{Multicat^{ps}}(\M,\Nn\times E\Sigma_*)\ar[d,"\phi"]\\
        &\Cat\mbox{-}\mathbf{Multicat}(\M\times E\Sigma_*,\Nn\times\Sigma_*).
    \end{tikzcd}
\end{center}
Thus, by \Cref{mainprop} if $F\colon\M\to\N$ is a pseudo symmetric $\Cat$-multifunctor, then $\psi F\colon\M\times E\Sigma_*\to\N\times E\Sigma_*$ is the unique symmetric $\Cat$-multifunctor which makes the diagram 
\begin{equation}\label{psi1}
    \begin{tikzcd}
        \M\ar[r,"\eta_{M}"]\ar[d,"F"']&\M\times E\Sigma_*\ar[d,"j\psi F"]\\
        \N\ar[r,"\eta_{\Nn}"']&\Nn\times E\Sigma_*
    \end{tikzcd}
\end{equation}
commute in $\Cat\mbox{-}\mathbf{Multicat^{ps}}.$ Similarly, by \Cref{2celllemma}, for $\theta\colon F\to G$ a pseudo symmetric $\Cat$-multinatural  transformation between $F,G\colon \M\to\Nn$ pseudo symmetric $\Cat$-multifunctors,  $\psi \theta\colon \psi F\to \psi G$ is the unique symmetric $\Cat$-multinatural transformation such that the equality of pasting diagrams

\begin{equation}\label{psi2}
\begin{tikzcd}
\M\arrow[dd, bend left=57, "G"{name=U}]\arrow[dd, bend right=57, "F"'{name=D}]\ar[r,"\eta_\M"]&\M\times E\Sigma_*\ar[dd,"j\psi F"]&& \M \ar[r,"\eta_\M"]\ar[dd,"F"']&\M\times E\Sigma_* \arrow[dd, bend left=50, "j\psi G"{name=A}]\arrow[dd, bend right=50, "j\psi F"'{name=B}]\\[-12pt]
&&\mathbf{=}&&\\[-12pt]
\Nn\ar[r,"\eta_\Nn"']&\N\times E\Sigma_*&&\Nn\ar[r,"\eta_\Nn"']&\N\times E\Sigma_*
\arrow[Rightarrow, from=D, to=U,shorten <=2mm,shorten >=1mm,"\theta"]
\arrow[Rightarrow, from=B, to=A,shorten <=2mm,shorten >=1mm,"j\psi \theta"]
\end{tikzcd} 
\end{equation}
holds in $\Cat\mbox{-}\mathbf{Multicat^{ps}}.$
\end{defi}
\begin{theorem}\label{finaladjunction}
    There is a 2-adjunction 
    \begin{center}
        \begin{tikzcd}
            \Cat\mbox{-}\mathbf{Multicat^{ps}}\ar[rr,bend left=10,"\psi"]&[-10pt]\bot&[-10pt]\Cat\mbox{-}\mathbf{Multicat}\ar[ll,bend left=10,"j"]
        \end{tikzcd}
    \end{center}
    where $j$ is the inclusion 2-functor.
\end{theorem}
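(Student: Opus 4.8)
The plan is to deduce the $2$-adjunction formally from the isomorphism of hom-categories already in hand. A strict $2$-adjunction $\psi\dashv j$ is the same thing as a $\Cat$-enriched adjunction between the $2$-categories $\Cat\mbox{-}\mathbf{Multicat^{ps}}$ and $\Cat\mbox{-}\mathbf{Multicat}$, i.e.\ an isomorphism of categories
$$\Cat\mbox{-}\mathbf{Multicat}(\psi\M,\Nn)\;\cong\;\Cat\mbox{-}\mathbf{Multicat^{ps}}(\M,j\Nn)$$
natural in $\M\in\Cat\mbox{-}\mathbf{Multicat^{ps}}$ and $\Nn\in\Cat\mbox{-}\mathbf{Multicat}$, the naturality squares being strict equalities of functors. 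Since $\psi\M=\M\times E\Sigma_*$ and $j\Nn=\Nn$, this isomorphism is exactly the functor $\phi$ of \Cref{phi}, with inverse $\eta^{*}$. So the proof reduces to three points: (a) $\psi$ really is a strict $2$-functor; (b) the family $\eta_\M$ is a $2$-natural transformation $1_{\Cat\mbox{-}\mathbf{Multicat^{ps}}}\Rightarrow j\psi$; (c) the isomorphism $\phi$ is natural in $\M$ and in $\Nn$. Granting these, the $\Cat$-enriched ``adjunction from a universal arrow'' principle packages $\eta$ as the unit and the projections $\epsilon_\Nn\colon\Nn\times E\Sigma_*\to\Nn$ (which equal $\phi(1_\Nn)$, by the formula for $\phi$ in the proof of \Cref{mainprop}) as the counit, and makes the triangle identities automatic. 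Throughout I will suppress the inclusion $j$ where harmless, as in the proof of \Cref{mainprop}.

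For point (a): the hom-level assignment of $\psi$ is the composite $\phi\circ(\eta_\Nn)_*$ of a functor with a whiskering operation, hence is itself a functor, so $\psi$ automatically respects identity $2$-cells and vertical composition of $2$-cells. That $\psi$ preserves identity $1$-cells and $1$-cell composition --- and, as an equality of functors on products of hom-categories, also horizontal composition of $2$-cells --- follows by uniqueness: $1_{\psi\M}$ solves the characterizing square \Cref{psi1} for $1_\M$, and for composable pseudo symmetric $F\colon\M\to\Nn$, $G\colon\Nn\to\mathcal{Q}$,
$$j(\psi G\circ\psi F)\circ\eta_\M=j\psi G\circ j\psi F\circ\eta_\M=j\psi G\circ\eta_\Nn\circ F=\eta_{\mathcal{Q}}\circ(G\circ F),$$
so $\psi G\circ\psi F$ solves the characterizing square for $G\circ F$; by the uniqueness in \Cref{mainprop}, $\psi(1_\M)=1_{\psi\M}$ and $\psi(G\circ F)=\psi G\circ\psi F$, and the $2$-cell analogue uses \Cref{2celllemma} and the pasting equality \Cref{psi2} in the same way. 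For point (b): the $1$-cell naturality squares of $\eta$ are precisely \Cref{psi1}, and the $2$-cell (pasting) naturality is precisely \Cref{psi2} --- both hold by the very definition of $\psi$ on $1$- and $2$-cells.

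For point (c), naturality in $\Nn$ along a symmetric multifunctor $H\colon\Nn\to\Nn'$ holds because $H\circ\phi(F)$ is symmetric and
$$j\bigl(H\circ\phi(F)\bigr)\circ\eta_\M=jH\circ j\phi(F)\circ\eta_\M=jH\circ F,$$
so by the uniqueness in \Cref{mainprop} one gets $H\circ\phi(F)=\phi(jH\circ F)$; and naturality in $\M$ along a pseudo symmetric multifunctor $G\colon\M'\to\M$ holds because, using \Cref{psi1} for $G$,
$$j\bigl(\phi(F)\circ\psi G\bigr)\circ\eta_{\M'}=j\phi(F)\circ j\psi G\circ\eta_{\M'}=j\phi(F)\circ\eta_\M\circ G=F\circ G,$$
whence $\phi(F)\circ\psi G=\phi(F\circ G)$. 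In both cases the corresponding statement on $2$-cells (multinatural transformations) is obtained by the identical argument with \Cref{2celllemma} in place of \Cref{mainprop}: whisker the defining equation $\phi(\theta)*1_{\eta_\M}=\theta$ and invoke uniqueness. This establishes the $\Cat$-natural isomorphism, hence the $2$-adjunction.

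I do not expect any step to be a genuine obstacle: no calculation beyond those already performed in \Cref{mainprop,2celllemma} is required, and every check is a one-line diagram chase through the characterizing squares \Cref{psi1,psi2}. The only thing that needs care is discipline: making sure each functoriality and naturality verification is carried out at the level of $2$-cells and not merely $1$-cells, so that what results is a bona fide $2$-adjunction and not just an ordinary adjunction between the underlying $1$-categories. But each such $2$-cell verification is formally the same as its $1$-cell counterpart, with \Cref{2celllemma} playing the role of \Cref{mainprop}.
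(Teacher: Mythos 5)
Your argument is correct, but it takes a genuinely different route from the paper. The paper proves the 2-adjunction in unit--counit form: it exhibits $\eta$ and the projections $\pi_\M\colon\M\times E\Sigma_*\to\M$ as unit and counit, checks strict 2-naturality of both, and verifies the two triangle identities directly; this requires explicit componentwise computations through the characterizing squares \Cref{psi1} and \Cref{psi2}, notably identifying $\psi jF=F\times 1_{E\Sigma_*}$, $\psi j\theta=j(\theta\times 1)$ and $\psi(\eta_\M)=1_\M\times\Delta$, including checks of the pseudo symmetry isomorphism components. You instead work in the hom-isomorphism form of a $\Cat$-enriched adjunction: you upgrade $\phi$ of \Cref{phi} to an isomorphism that is 2-natural in both variables, with every verification (2-functoriality of $\psi$, naturality in $\M$ and in $\Nn$ at both the 1-cell and 2-cell level) carried out purely formally from the uniqueness clauses of \Cref{mainprop} and \Cref{2celllemma}, and then invoke the standard equivalence between the two presentations of an enriched adjunction to obtain unit, counit and triangle identities for free; your identification of the counit with the projection, $\phi(j1_\Nn)=\pi_\Nn$, agrees with the paper's explicit formula for $\phi$. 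Your route spares the reader the paper's pseudo-symmetry component computations and, as a bonus, actually verifies the 2-functoriality of $\psi$, which the paper's definition asserts without a detailed check; its costs are reliance on the general enriched-adjunction principle as a cited black box, and the absence of the explicit formulas $\psi jF=F\times 1$, $\psi j\theta=j(\theta\times 1)$, $\psi(\eta_\M)=1\times\Delta$, which the paper reuses afterwards (via \Cref{symmf}, \Cref{eta} and \Cref{eta2}) in proving that $\mathbf{D}$ is a 2-category isomorphic to $\Cat\mbox{-}\mathbf{Multicat^{ps}}$.
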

\begin{proof}
Following \Cref{phi}, we define the unit of the adjunction as the strict 2-natural transformation $\eta\colon 1_{\Cat\mbox{-}\mathbf{Multicat^{ps}}}\to j\psi$ having component $\eta_\M$ at a $\Cat$-multicategory $\M.$ We also define the counit of the adjunction $\pi\colon \psi j\to 1_{\Cat\mbox{-}\mathbf{Multicat}}$ as having component at a $\Cat$-multicategory $\M$ the projection $\pi_M\colon \M\times E\Sigma_*\to \M.$\\

The fact that $\eta$ defines a strict 2-natural transformation follows directly from \Cref{psi1} and \Cref{psi2}. To prove that the data of $\pi$ defines a strict 2-natural transformation we need to prove that given $F\colon \M\to \N$ symmetric $\Cat$-multifunctor, the following diagram commutes:
\begin{center}
    \begin{tikzcd}
        \M\times E\Sigma_*\ar[r,"\pi_\M"]\ar[d,"\psi j F"']&\M\ar[d,"F"]\\
        \Nn\times E\Sigma_*\ar[r,"\pi_\Nn"']&\Nn.
    \end{tikzcd}
\end{center}
Indeed, we prove that $\psi jF=F\times 1_{E\Sigma_*}.$ By \Cref{psi1}, it suffices to show that the following diagram commutes in $\Cat\mbox{-}\mathbf{Multicat^{ps}}$:
\begin{equation}\label{symmf}
    \begin{tikzcd}
        \M\ar[r,"\eta_\M"]\ar[d,"jF"']&\M\times E\Sigma_*\ar[d,"j(F\times 1)"]\\
        \Nn\ar[r,"\eta_\N"']&\Nn\times E\Sigma_*.
    \end{tikzcd}
\end{equation}
It is clear that this diagram commutes at the level of objects, 1-cells, and 2-cells of the multicategory. The pseudo symmetry isomorphisms of both composites also agree. Indeed, for $f\colon \la a\ra\to b$ a 1-cell of $\M$ and $\sigma\in\Sigma_n,$ by \Cref{horcomp}, we get that 
\begin{align*}
    (j(F\times 1)\eta_\M)_{\sigma;f}=&j(F\times 1)_{\sigma;\eta_\M(f)}\circ j(F\times 1)({\eta_\M}_{\sigma;f})\\
    &=(1_{(Ff)\sigma},1_\sigma)\circ (1_{(Ff)\sigma},E_\id^\sigma)\\
    &=(1_{(Ff)\sigma},E_\id^\sigma)\circ (1_{(Ff)\sigma},1_\sigma)\\
    &={\eta_\N}_{\sigma;Ff}\circ\eta_\N(jF_{\sigma;f})\\
    &=(\eta_\N\circ jF)_{\sigma;f}.
\end{align*}
To finish proving the 2-naturality of $\pi_\M$, we need to prove that given $\M,\N$ $\Cat$-multicategories, $F,G\colon \M\to\Nn$ $\Cat$-multifunctors and a $\Cat$-multinatural transformation $\theta\colon F\to G,$  the following equality of pasting diagrams holds in $\Cat\mbox{-}\mathbf{Multicat}$:
\begin{equation*}
\begin{tikzcd}
\M\times E\Sigma_*\arrow[dd, bend left=50, "j(G\times 1)"{name=U}]\arrow[dd, bend right=50, "j(F\times 1)"'{name=D}]\ar[r,"\pi\M"]&\M\ar[dd," G"]&& \M\times E\Sigma_* \ar[r,"\pi_\M"]\ar[dd,"j(F\times 1)"']&\M\arrow[dd, bend left=60, "G"{name=A}]\arrow[dd, bend right=60, "F"'{name=B}]\\[-12pt]
&&\mathbf{=}&&\\[-12pt]
\Nn\times E\Sigma_*\ar[r,"\pi_\Nn"']&\N&&\Nn\times E\Sigma_*\ar[r,"\pi_\Nn"']&\N.
\arrow[Rightarrow, from=D, to=U,shorten <=2mm,shorten >=1mm,"\psi j \theta"]
\arrow[Rightarrow, from=B, to=A,shorten <=3mm,shorten >=2mm,"\theta"]
\end{tikzcd} 
\end{equation*}
In turn, the last equality of pasting diagrams holds since $\psi j \theta=j(\theta\times 1).$ To see this, by \Cref{psi2}, we must show the following equality of pasting diagrams in $\Cat\mbox{-}\mathbf{Multicat^{ps}}:$
\begin{equation}\label{eta2}
\begin{tikzcd}
\M\arrow[dd, bend left=55, "jG"{name=U}]\arrow[dd, bend right=55, "jF"'{name=D}]\ar[r,"\eta_\M"]&\M\times E\Sigma_*\ar[dd,"j( G\times 1)"]&& \M \ar[r,"\eta_\M"]\ar[dd,"jF"']&\M\times E\Sigma_* \arrow[dd, bend left=50, "j(G\times 1)"{name=A}]\arrow[dd, bend right=50, "j( F\times 1)"'{name=B}]\\[-12pt]
&&\mathbf{=}&&\\[-12pt]
\Nn\ar[r,"\eta_\Nn"']&\N\times E\Sigma_*&&\Nn\ar[r,"\eta_\Nn"]&\N\times E\Sigma_*.
\arrow[Rightarrow, from=D, to=U,shorten <=2mm,shorten >=1mm,"j\theta"]
\arrow[Rightarrow, from=B, to=A,shorten <=2mm,shorten >=1mm,"j( \theta\times 1)"]
\end{tikzcd} 
\end{equation}
To check that this equality holds let $a\in\ob(\M).$ We get, by \Cref{hor}, that
\begin{align*}
    (1_{\eta_\Nn}*j\theta)_a&=\gamma\left(1_{\eta_\N(jGa)};\eta_\Nn (\theta_a)\right)\\
    &=\gamma\left((1_{Ga},1_{\id});(\theta_a,1_\id)\right)\\
    &=\gamma\left((\theta_a,1_\id);(1_{Fa},1_\id)\right)\\
    &=\gamma\left(j(\theta\times 1)_{\eta_\Nn (a)};j(F\times 1)(1_{\eta_\M(a)})\right)\\
    &=(j(\theta\times 1)*\eta_\M)_a.
\end{align*}
Thus, $\eta$ and $\pi$ are strict 2-natural transformations and we just need to prove that they satisfy the triangle identities. To prove that the identity $(1_j*\pi)(\eta*1_j)=1_j$ holds we need to prove that for $\M$ a $\Cat$-multicategory the diagram
\begin{center}
    \begin{tikzcd}
        &\M\times E\Sigma_*\ar[rd,"j\pi_\M"]&\\
        \M\ar[ru,"\eta_\M"]\ar[rr,"1_\M"']&&\M
    \end{tikzcd}
\end{center}
commutes in $\Cat\mbox{-}\mathbf{Multicat^{ps}}$. This is clear at the level of objects, $n$-ary 1-cells and $n$-ary 2-cells. The pseudo symmetry isomorphisms of both pseudo symmetric $\Cat$-multifunctors also agree since, for  $f\colon \la a\ra\to b$  an $n$-ary 1-cell of $\M$ and $\sigma\in\Sigma_n,$ we obtain, by \Cref{horcomp},
$$((j\pi_\M)\circ\eta_\M)_{\sigma;f}=(j\pi_\M)_{\sigma;\eta_{\M}(f)}\circ j\pi_\M ({\eta_{M}}_{\sigma;f})=1_{f\sigma}={1_{\M}}_{\sigma;f}.$$
The other triangle identity is $(\pi*1_\psi)(1_\psi*\eta)=1_\psi.$ To check it, we must prove that, given a $\Cat$-multicategory $\M,$  the composite
\begin{center}
    \begin{tikzcd}
        \M\times E\Sigma_*\ar[r,"\psi\eta_\M"]&\M\times E\Sigma_*\times E\Sigma_*\ar[r,"\pi_{\M\times E\Sigma_*}"]&\M\times E\Sigma_*
    \end{tikzcd}
\end{center}
agrees with $1_{\M\times E\Sigma_*}.$ This holds since, if $\Delta\colon E\Sigma_*\to E\Sigma_*\times E\Sigma_*$ denotes the diagonal map, then $\psi(\eta_\M)=1_\M\times \Delta.$ To see this, notice that by \Cref{psi1} all we need is to prove that the following diagram is commutative:
\begin{equation}\label{eta}
    \begin{tikzcd}
        \M\ar[r,"\eta_\M"]\ar[d,"\eta_\M"']&\M\times  E\Sigma_*\ar[d,"j(1\times \Delta)"]\\
        \M\times E\Sigma_*\ar[r,"\eta_{\M\times  E\Sigma_*}"']&\M\times E\Sigma_* \times E\Sigma_*.
    \end{tikzcd}
\end{equation}
Now, the previous diagram is evidently commutative at the level of objects, 1-cells, and 2-cells. The diagram also commutes at the level of pseudo symmetry isomorphisms  since, for $f\colon\la a\ra\to b$ an $n$-ary 1-cell in $\M$ and $\sigma\in\Sigma_n,$
\begin{align*}
    {\left(\eta_{\M\times E\Sigma_*}\circ\eta_\M\right)}_{\sigma;f}&={\eta_{\M\times E\Sigma_*}}_{\sigma;\eta_\M(f)}\circ \eta_{\M\times E\Sigma_*}({\eta_M}_{\sigma;f})\\
    &=(1_{f\sigma},1_\sigma,E_\id^\sigma)\circ(1_{f\sigma},E_\id^\sigma,1_\id)\\
    &=(1_{f\sigma},1_\sigma,1_\sigma)\circ(1_{f\sigma},E_\id^\sigma,E_\id^\sigma)\\
    &=j(1\times\Delta)_{\sigma;\eta_\M(f)}\circ j(1\times \Delta)({\eta_{\M}}_{\sigma;f})\\
    &=\left(j(1\times \Delta)\circ \eta_\M\right)_{\sigma;f}.
\end{align*}
We conclude that the triangle identities are satisfied and thus we get the desired 2-adjunction.
\end{proof}
We can use this 2-adjunction to describe the 2-category $\mathbf{Cat}$-$\mathbf{Multicat^{ps}}$ in terms of symmetric \textbf{Cat}-multifunctors and symmetric \textbf{Cat}-multinatural transformations alone, thus upgrading the functors $\phi$ from \Cref{phi}  to an isomorphism of 2-categories.

\begin{defi}\label{2D}The 2-category $\mathbf{D}$ has $\textbf{Cat}$-multicategories as objects. For $\mathcal{M},\mathcal{N}$ $\mathbf{Cat}$-multicategories, the category of morphisms between $\M$ and $\N$ is 
$$\mathbf{D}(\mathcal{M},\mathcal{N})=\mathbf{Cat}\mbox{-}\mathbf{Multicat}(\mathcal{M}\times E\Sigma_*,\mathcal{N}).$$
In particular, vertical composition of 2-cells is defined as in $\Cat\mbox{-}\mathbf{Multicat}.$ For $F\colon\mathcal{M}\times E\Sigma_*\to \mathcal{N}$ and $G\colon\mathcal{N}\times E\Sigma_*\to \mathcal{Q}$ symmetric $\Cat$-multifunctors, the composition $G\circ{F}$ is defined as the composite
\begin{center}
    \begin{tikzcd}
        \M\times E\Sigma_*\ar[r,"1\times \Delta"]&\M\times E\Sigma_*\times E\Sigma_*\ar[r,"F\times 1"]&\Nn\times E\Sigma_*\ar[r,"G"]&\mathcal{Q}
    \end{tikzcd}
\end{center}
in $\Cat\mbox{-}\mathbf{Multicat}.$ Similarly, for $F,J\colon\M\times E\Sigma_* \to \N,$ $G,K\colon\N\times E\Sigma_*\to \mathcal{Q}$ symmetric $\Cat$-multifunctors and $\theta\colon F\to J,$ $\zeta\colon G\to K$ $\Cat$-multinatural transformations, $\zeta *\theta$ is defined as the pasting
\begin{center}
    \begin{tikzcd}
        \M\times E\Sigma_*\ar[r,"1\times \Delta"]&\M\times E\Sigma_*\times E\Sigma_*\ar[r,bend left=15,"F\times 1"{name=F}]\ar[r,bend right=15,"J\times 1"{swap,name=J}]&[15pt]\Nn\times E\Sigma_*\ar[r,bend left=30,"G"{name=G}]\ar[r,bend right=30,"K"{swap,name=K}]&\mathcal{Q}
        \arrow[Rightarrow,from=F,to=J,shorten <=2mm,shorten >=2mm,"\theta\times 1"]
        \arrow[Rightarrow,from=G,to=K,shorten <=2mm,shorten >=2mm,"\zeta"']
    \end{tikzcd}
\end{center}
in $\Cat\mbox{-}\mathbf{Multicat}.$
\end{defi}
The previous definition makes $\mathbf{D}$ into a 2-category and the functors $\phi,$ and $\eta^*$ from \Cref{phi} into the components of isomorphisms of 2-categories.
\begin{theorem}The data of the previous definition defines a 2-category $\mathbf{D}$ isomorphic to  $\Cat\mbox{-}\mathbf{Multicat^{ps}}.$
\end{theorem}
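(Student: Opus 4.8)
The plan is to promote the family of isomorphisms $\phi$ from \Cref{phi} to a single isomorphism of $2$-categories $\Phi\colon\Cat\mbox{-}\mathbf{Multicat^{ps}}\to\mathbf{D}$ which is the identity on objects and equals $\phi$ on each hom-category. Since $\Cat\mbox{-}\mathbf{Multicat^{ps}}$ is a $2$-category (Yau) and each $\phi$ is already an isomorphism of categories respecting identity $2$-cells and vertical composition by construction, it suffices to check that $\Phi$ carries the identity $1$-cells and the horizontal compositions of $1$-cells and of $2$-cells in $\Cat\mbox{-}\mathbf{Multicat^{ps}}$ to the data prescribed for $\mathbf{D}$ in \Cref{2D}. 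Transport of structure along $\Phi$ then forces those data to satisfy the $2$-category axioms, so $\mathbf{D}$ is a $2$-category and $\Phi$ is an isomorphism, its inverse being the identity-on-objects functor that is $\eta^*$ on hom-categories.

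For the identity $1$-cells, observe that the projection $\pi_\M\colon\M\times E\Sigma_*\to\M$ is a two-sided unit for the composition of \Cref{2D}: composing $G\colon\M\times E\Sigma_*\to\N$ with $\pi_\M$ on the right produces $G\circ(\pi_\M\times 1_{E\Sigma_*})\circ(1_\M\times\Delta)$, and since $(\pi_\M\times 1_{E\Sigma_*})\circ(1_\M\times\Delta)=1_{\M\times E\Sigma_*}$ by inspection of the product formulas this is $G$; the left-unit check $\pi_\N\circ(G\times 1_{E\Sigma_*})\circ(1_\M\times\Delta)=G$ is just as direct. On the other hand $j\pi_\M\circ\eta_\M=1_\M$ in $\Cat\mbox{-}\mathbf{Multicat^{ps}}$ — this is the triangle identity verified in the proof of \Cref{finaladjunction} — so the uniqueness clause of \Cref{mainprop} forces $\phi(1_\M)=\pi_\M$; hence $\Phi$ preserves identities.

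The substantive step is preservation of horizontal composition, which I would handle using $\psi$ and the identities collected in the proof of \Cref{finaladjunction}. Given pseudo symmetric $F\colon\M\to\N$ and $G\colon\N\to\Qq$, the multifunctor $\phi(G)\circ(\phi(F)\times 1_{E\Sigma_*})\circ(1_\M\times\Delta)$ is a composite of symmetric multifunctors, hence symmetric, so by the uniqueness in \Cref{mainprop} it equals $\phi(GF)$ provided its precomposite with $\eta_\M$ is $GF$. Using $\psi(\eta_\M)=1_\M\times\Delta$ (\Cref{eta}), $\psi(jH)=H\times 1_{E\Sigma_*}$ for $H$ symmetric (\Cref{symmf}), the defining commutativities \Cref{psi1}, and the relation $\phi(H)\circ\eta_\M=H$ from \Cref{mainprop}, one computes
\begin{align*}
\bigl(\phi(G)\circ(\phi(F)\times 1)\circ(1_\M\times\Delta)\bigr)\circ\eta_\M
&=\phi(G)\circ(\phi(F)\times 1)\circ\eta_{\M\times E\Sigma_*}\circ\eta_\M\\
&=\phi(G)\circ\eta_\N\circ\phi(F)\circ\eta_\M=G\circ F,
\end{align*}
the first equality absorbing the diagonal via $(1_\M\times\Delta)\circ\eta_\M=\psi(\eta_\M)\circ\eta_\M=\eta_{\M\times E\Sigma_*}\circ\eta_\M$ from \Cref{psi1}, and the second using $(\phi(F)\times 1)\circ\eta_{\M\times E\Sigma_*}=\psi(j\phi(F))\circ\eta_{\M\times E\Sigma_*}=\eta_\N\circ\phi(F)$, again from \Cref{psi1}. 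The $2$-cell case is entirely parallel: for pseudo symmetric multinatural transformations $\theta,\zeta$ one forms the pasting $\phi(\zeta)\ast^{\mathbf{D}}\phi(\theta)$ of \Cref{2D}, whiskers it by $1_{\eta_\M}$, and uses $\psi(j\theta)=j(\theta\times 1)$ (\Cref{eta2}), \Cref{psi2}, and $\phi(\theta)\ast 1_{\eta_\M}=\theta$ from \Cref{2celllemma} to reduce the result to $\zeta\ast\theta$; the uniqueness in \Cref{2celllemma} then yields $\phi(\zeta\ast\theta)=\phi(\zeta)\ast^{\mathbf{D}}\phi(\theta)$.

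I expect the only genuine difficulty to be organizational bookkeeping: distinguishing the compositions and whiskerings performed in $\Cat\mbox{-}\mathbf{Multicat}$, in $\Cat\mbox{-}\mathbf{Multicat^{ps}}$, and in $\mathbf{D}$, and tracking the repeated appeals to $\psi(\eta_\M)=1_\M\times\Delta$ and $\psi(jH)=H\times 1_{E\Sigma_*}$ needed to rewrite the composition law of \Cref{2D} in terms of $\psi$. Once preservation of identities and of horizontal composition of $1$- and $2$-cells is in hand, the associativity, unit, and interchange laws for $\mathbf{D}$ hold automatically, since $\Phi$ then identifies the data of $\mathbf{D}$ with that of the $2$-category $\Cat\mbox{-}\mathbf{Multicat^{ps}}$; this completes the proof that $\mathbf{D}$ is a $2$-category isomorphic to $\Cat\mbox{-}\mathbf{Multicat^{ps}}$, with the isomorphism given by $\phi$ and $\eta^*$ on hom-categories.
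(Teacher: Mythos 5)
Your proposal is correct and follows essentially the same route as the paper: you verify that $\phi$ (with inverse $\eta^*$) matches the composition law of $\mathbf{D}$ with horizontal composition in $\Cat\mbox{-}\mathbf{Multicat^{ps}}$ by reducing to the diagrams \Cref{symmf}, \Cref{eta}, \Cref{eta2} together with the uniqueness clauses of \Cref{mainprop} and \Cref{2celllemma}, and then transport the 2-category axioms. The only difference is cosmetic: you make the identity 1-cells (namely $\phi(1_\M)=\pi_\M$, via the triangle identity) explicit, which the paper leaves implicit.
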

\begin{proof}
The (horizontal) composition functors are defined so that $\phi$ and $\eta^*$ become the componentwise functors of a 2-category isomorphism between $\mathbf{D}$ and $\mathbf{Cat}$-$\mathbf{Multicat^{ps}}.$ More precisely, for $\mathcal{M},\mathcal{N}$ and $\mathcal{Q}$  \textbf{Cat}-multicategories, we will prove that the $\mathbf{D}$ composition functor defined, $\circ ' \colon\mathbf{D}(\mathcal{N},\mathcal{Q})\times \mathbf{D}(\mathcal{M},\mathcal{N})\to\mathbf{D}(\mathcal{M},\mathcal{Q}),$ makes the following diagram commute, where $\circ$ denotes the horizontal composition functor of $\mathbf{Cat\mbox{\textbf{-}}Multicat^{ps}}\colon$
\begin{equation}\label{isomorphism}
\begin{tikzcd}[font=\small]
    \mathbf{D}(\mathcal{N},\mathcal{Q})\times\mathbf{D}(\mathcal{M},\mathcal{N})\ar[r,"\circ'"]\ar[d,"\eta*\times \eta*"swap]&[-10pt]\mathbf{D}(\mathcal{M},\mathcal{Q})\\
   \mathbf{Cat\mbox{-}Multicat^{ps}}(\mathcal{N},\mathcal{Q})\times\mathbf{Cat\mbox{-}Multicat^{ps}}(\mathcal{M},\mathcal{N})\ar[r,"\circ"swap]&\mathbf{Cat\mbox{-}Multicat^{ps}}(\mathcal{M},\mathcal{Q}).\ar[u,"\phi"swap]
\end{tikzcd}
    \xymatrix{}
\end{equation}
Let $G\colon\Nn\times \mathcal{Q}$ and $F\colon\M\times E\Sigma_*\to \Nn$ be symmetric $\Cat$-multifunctors. The commutativity of \Cref{isomorphism} for $(G,F)$ reduces to the commutativity of the following diagram by \Cref{mainprop}: 
\begin{center}
    \begin{tikzcd}
        \M\ar[r,"\eta_\M"]\ar[d,"\eta_\M"']&\M\times E\Sigma_*\ar[d,"j(1\times \Delta)"]&\\
        \M\times E\Sigma_*\ar[r,"\eta_{\M\times E\Sigma_*}"]\ar[d,"jF"']&\M\times E\Sigma_*\times E\Sigma_*\ar[d,"j(F\times 1)"]&\\
        \N\ar[r,"\eta_N"]&\N\times E\Sigma_*\ar[r,"jG"]&\mathcal{Q}.
    \end{tikzcd}
\end{center}
This diagram in turn is commutative by \Cref{symmf} and \Cref{eta}. Now, if $F,G$ are as before, $J\colon\M\times E\Sigma_*$ and $K\colon\N\times E\Sigma_*\to \mathcal{Q}$ are symmetric $\Cat$-multifunctors, and $\theta\colon F\to J,$ $\zeta\colon G\to K$ are $\Cat$-multinatural transformations, by \Cref{2celllemma}, the commutativity of \Cref{isomorphism} for $(\zeta,\theta)$ reduces to the equality of pasting diagrams:
\begin{center}
    \begin{tikzcd}
        \M\ar[r,"\eta_\M"]\ar[d,"\eta_\M"']&\M\times E\Sigma_*\ar[d,"j(1\times \Delta)"]&[-10pt]&[-10pt]\M\ar[d,"\eta_\M"']\ar[r,"\eta_\M"]&\M\times E\Sigma_*\ar[d,"j(1\times \Delta)"]\\
        \M\times E\Sigma_*\ar[r,"\eta_{\M\times E\Sigma_*}"]\ar[d,"jF"']&\M\times E\Sigma_*\times E\Sigma_*\ar[d,bend right=40,"j(F\times 1)"{swap,name=F}]\ar[d,bend left=40,"j(J\times 1)"name=J]&=&\M\times E\Sigma_*\ar[r,"\eta_{\M\times E\Sigma_*}"]
        \ar[d,bend right=40,"jF"{swap,name=f}]\ar[d,bend left=40,"jJ"{name=j}]&\M\times E\Sigma_*\times E\Sigma_*\ar[d,"j(J\times 1)"]\\[19pt]
        \N\ar[r,"\eta_N"]&\N\times E\Sigma_*\ar[d,bend right=40,"jG"{swap,name=G}]\ar[d,bend left=40,"jK"name=K]&&\N\ar[r,"\eta_N"]&\N\times E\Sigma_*\ar[d,bend right=40,"jG"{swap,name=g}]\ar[d,bend left=40,"jK"name=k]\\[19pt]
        &\mathcal{Q}&&&\mathcal{Q}.
        \arrow[Rightarrow,from=F,to=J,shorten <=2mm,shorten >=2mm,"j(\theta\times 1)"]
        \arrow[Rightarrow,from=f,to=j,shorten <=2mm,shorten >=2mm,"j\theta"]
        \arrow[Rightarrow,from=G,to=K,shorten <=2mm,shorten >=2mm,"j\zeta "]
        \arrow[Rightarrow,from=g,to=k,shorten <=2mm,shorten >=2mm,"j\zeta "]
    \end{tikzcd}
\end{center}
This equality holds by \Cref{eta2} and makes implicit use of \Cref{symmf} and \Cref{eta}. We can thus define $\phi\colon \mathbf{Cat}\mbox{\textbf{-}}\mathbf{Multicat^{ps}}\to \mathbf{D}$ in objects as the identity map, and do the same for $\eta^*\colon \mathbf{D}\to \mathbf{Cat}\mbox{\textbf{-}}\mathbf{Multicat^{ps}},$ with the component functors given for $\M$ and $\N$ multicategories by \Cref{phimn} and \Cref{etamn} respectively. By \Cref{isomorphism} and the fact that $\phi$ and $\eta^*$ are componentwise isomorphisms, $\phi$ and $\eta$ preserve vertical composition of 2-cells and horizontal composition of 1-cells and 2-cells. The fact that $\mathbf{Cat}\mbox{\textbf{-}}\mathbf{Multicat^{ps}}$ is a 2-category implies that $\mathbf{D}$ is a 2-category. This further turns $\phi$ and $\eta^*$ into isomorphisms of 2-categories.
\end{proof}
\section{Applications to inverse \texorpdfstring{$K$}-theory}\label{Section4}
We use our understanding of pseudo symmetric multifunctors to show that they preserve $E_n$-algebras for $n=1,2,3,...,\infty.$ First we define $E_n$ $\Cat$-operads.
\begin{defi}
For $n=1,...,\infty,$ an $E_n$ $\Cat$-operad is a $\Cat$-operad that becomes a topological $E_n$-operad (in the sense of \cite{May72}) after applying the classifying space functor. A topological $E_n$ operad is one that has the same $\Sigma$-equivariant homotopy type as the little $n$-cubes operad.
\end{defi}
\begin{ex}
    An example of an $E_\infty$ $\Cat$-operad is $E\Sigma_*.$  There are also examples of $E_n$ $\Cat$-operads for each $n=1,2,\dots$ in \cite{Berger96} and \cite{Fiedorowicz03}, which furthermore have a free action of the symmetric group (on objects).
\end{ex}
\begin{defi}
Let $\M$ be a $\Cat$-multicategory and $\mathcal{O}$ a $\Cat$-operad. An algebra (respectively a pseudo symmetric algebra) in $\M$ over $\mathcal{O}$ is a symmetric (respectively pseudo symmetric) $\Cat$-multifunctor $\mathcal{O}\to \M.$ For $n\in\{1,2,\dots,\infty\},$ an $E_n$ algebra (respectively a pseudo symmetric $E_n$ algebra) in $\mathcal{M}$ is an algebra (respectively a pseudo symmetric algebra) over an $E_n$ operad. 
\end{defi}
\begin{rmk}If $\mathcal{O}$ is $\mathbf{Cat}$-operad and $\M$ is a $\Cat$-multicategory,  the pseudo symmetric algebras over $\mathcal{O}$ agree with symmetric algebras over the operad $\mathcal{O}\times E\Sigma_*.$ For example, while algebras over the commutative operad $\{*\}$ in $\M$ are the commutative monoids in $\M$, pseudo symmetric algebras over $\{*\}$ in $\M$ are precisely algebras over the Barratt-Eccles operad and thus, $E_\infty$-algebras. Similarly, pseudo symmetric algebras over the $E_\infty$ $\Cat$-operad $E\Sigma_*,$ which are defined in \cite{Y23} as pseudo symmetric $E_\infty$ algebras in $\M$, are algebras over $E\Sigma_*\times E\Sigma_*=E(\Sigma_*\times \Sigma_*)$ which is still an $E_\infty$ $\Cat$-operad, and thus, they are still $E_\infty$ algebras in the sense defined above. If we let $\mathcal{O}$ be a symmetric $\Cat$-operad with a free action of the symmetric group, $\mathcal{O}\times E\Sigma_*$ is componentwise $\Sigma$-equivariantly homotopy equivalent to $\mathcal{O}$ (after taking nerves), that is, for each $n\geq 0,$ the projection $\mathcal{O}(n)\times E\Sigma_n\to \mathcal{O}(n)$ induces a $\Sigma_n$ equivariant homotopy equivalence on nerves. Thus, we have the following result.
\end{rmk}
\begin{lem}{\color{white}.}
\begin{enumerate}
    \item Let $\mathcal{O}$ be a ($\Sigma$-free) $E_n$ $\Cat$-operad. Then $\mathcal{O}\times E\Sigma_*$ is an $E_n$ $\Cat$-operad.
    \item Pseudo symmetric $E_n$ algebras over ($\Sigma$-free) $E_n$ $\Cat$-operads are $E_n$ algebras for $n=1,2,\dots,\infty.$
\end{enumerate}
\end{lem}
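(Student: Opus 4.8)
The plan is to prove (1) by an equivariant homotopy-theoretic argument after passing to classifying spaces, and then to deduce (2) formally from (1) together with \Cref{phi} (equivalently, the remark immediately preceding the lemma).

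For (1), I would first use that the classifying space functor $B=|N(-)|$ preserves finite products, so that $B(\mathcal{O}\times E\Sigma_*)\cong B\mathcal{O}\times BE\Sigma_*$ as topological operads and the projection $p\colon B\mathcal{O}\times BE\Sigma_*\to B\mathcal{O}$ is a map of topological operads. Here $BE\Sigma_*$ is the topological Barratt--Eccles operad: each $BE\Sigma_k$ is the classifying space of the contractible groupoid $E\Sigma_k$, hence contractible, and since $\Sigma_k$ acts freely on $\ob(E\Sigma_k)=\Sigma_k$ it acts freely on $BE\Sigma_k$, which is therefore a model for the universal free $\Sigma_k$-space. (The needed observation is that a free action of $\Sigma_k$ on the objects of a category induces a free action on the realization of its nerve: a simplicial automorphism fixing an interior point of a nondegenerate simplex fixes that simplex and hence all of its vertices.) The $\Sigma$-freeness hypothesis on $\mathcal{O}$ gives, by the same observation, a free $\Sigma_k$-action on $B\mathcal{O}(k)$, hence on $B\mathcal{O}(k)\times BE\Sigma_k$ with the diagonal action. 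Next I would note that the $k$-th component $p_k\colon B\mathcal{O}(k)\times BE\Sigma_k\to B\mathcal{O}(k)$ of $p$ is $\Sigma_k$-equivariant and, since $BE\Sigma_k$ is contractible, a nonequivariant homotopy equivalence; because source and target are free $\Sigma_k$-CW complexes, the standard fact that an equivariant map of free $G$-CW complexes that is a nonequivariant homotopy equivalence is automatically a $G$-equivariant homotopy equivalence (all fixed-point sets for nontrivial subgroups being empty) shows that $p_k$ is a $\Sigma_k$-equivariant homotopy equivalence. Assembling over $k$, $p$ is a levelwise $\Sigma$-equivariant homotopy equivalence of topological operads, so $B(\mathcal{O}\times E\Sigma_*)$ has the same $\Sigma$-equivariant homotopy type as $B\mathcal{O}$, which is a topological $E_n$ operad by hypothesis; hence $\mathcal{O}\times E\Sigma_*$ is an $E_n$ $\Cat$-operad.

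For (2), I would let $\mathcal{O}$ be a $\Sigma$-free $E_n$ $\Cat$-operad and $A\colon\mathcal{O}\to\M$ a pseudo symmetric $E_n$ algebra, i.e.\ a pseudo symmetric $\Cat$-multifunctor out of the one-object $\Cat$-multicategory $\mathcal{O}$. Then \Cref{phi}, applied with the one-object multicategory $\mathcal{O}$, produces a symmetric $\Cat$-multifunctor $\phi(A)\colon\mathcal{O}\times E\Sigma_*\to\M$ --- this is precisely the identification of pseudo symmetric algebras over $\mathcal{O}$ with symmetric algebras over the operad $\mathcal{O}\times E\Sigma_*$ recorded in the preceding remark. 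By part (1), $\mathcal{O}\times E\Sigma_*$ is an $E_n$ $\Cat$-operad, so $\phi(A)$ is a symmetric algebra over an $E_n$ operad, that is, an $E_n$ algebra in $\M$. This finishes (2).

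The step I expect to be the main obstacle --- or rather the only nonformal point --- is the equivariant homotopy theory inside (1): one must check that freeness of the symmetric-group action survives passage to the nerve, and that for free actions a nonequivariant homotopy equivalence is automatically equivariant. This is exactly where the $\Sigma$-freeness hypothesis is essential: without it, $p_k\colon B\mathcal{O}(k)\times BE\Sigma_k\to B\mathcal{O}(k)$ is only the Borel (free-replacement) map and need not be a $\Sigma_k$-equivariant equivalence. Everything else --- products of operads and their projections being operad maps, and the transitivity of ``same $\Sigma$-equivariant homotopy type'' along the zig-zag of equivalences witnessing that $B\mathcal{O}$ is $E_n$ --- is routine bookkeeping.
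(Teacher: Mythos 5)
Your proof is correct and takes essentially the same route as the paper, which justifies this lemma via the remark immediately preceding it: pseudo symmetric algebras over $\mathcal{O}$ are identified with symmetric algebras over $\mathcal{O}\times E\Sigma_*$ by \Cref{mainprop} and \Cref{phi}, and for $\Sigma$-free $\mathcal{O}$ the projection $\mathcal{O}(k)\times E\Sigma_k\to\mathcal{O}(k)$ induces levelwise $\Sigma_k$-equivariant homotopy equivalences on nerves. Your write-up merely makes explicit the equivariant details (freeness of the action surviving passage to the nerve, and the free-action equivariant Whitehead argument) that the paper leaves implicit.
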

We remind the reader that freeness is not a serious restriction since there are $E_n$ operads in $\Cat,$ like those in \cite{Berger96} and \cite{Fiedorowicz03} which are free. As a corollary we conclude that pseudo symmetric $\Cat$-multifunctors preserve $E_n$ algebras.
\begin{cor}\label{finalcor}
    Let $\M$ and $\N$ be $\Cat$-multicategories and $F\colon \mathcal{\M}\to \mathcal{N}$ be a pseudo symmetric $\Cat$-multifunctor, then:
    \begin{enumerate}
        \item $F$ sends commutative monoids in $\M$ to $E_\infty$ algebras in $\N.$
        \item $F$ sends $E_n$-algebras (parameterized by free $\Cat$-operads), to $E_n$-algebras.
    \end{enumerate}
\end{cor}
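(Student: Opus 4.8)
The plan is to deduce \Cref{finalcor} directly from \Cref{mainprop} and the preceding lemma, the only extra ingredient being the elementary fact that precomposing a symmetric algebra with a pseudo symmetric $\Cat$-multifunctor yields a pseudo symmetric algebra with the expected underlying object. Recall that $j\colon\Cat\mbox{-}\mathbf{Multicat}\to\Cat\mbox{-}\mathbf{Multicat^{ps}}$ is a $2$-functor, so for a $\Cat$-operad $\mathcal{O}$ and a symmetric $\Cat$-multifunctor $G\colon\mathcal{O}\to\M$ (an $\mathcal{O}$-algebra in $\M$), the horizontal composite $H:=F\circ jG\colon\mathcal{O}\to\N$ in $\Cat\mbox{-}\mathbf{Multicat^{ps}}$ is a pseudo symmetric $\mathcal{O}$-algebra in $\N$, and its value on the unique object of $\mathcal{O}$ is $F$ applied to the underlying object of $G$.

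For part (2), fix a free $E_n$ $\Cat$-operad $\mathcal{O}$ and an $E_n$-algebra $A$ in $\M$, i.e.\ a symmetric $\Cat$-multifunctor $G\colon\mathcal{O}\to\M$ with $A=G(*)$. Then $H:=F\circ jG\colon\mathcal{O}\to\N$ is a pseudo symmetric $\mathcal{O}$-algebra, and by \Cref{mainprop} (with $\M$ there replaced by $\mathcal{O}$) it corresponds to a symmetric $\Cat$-multifunctor $\phi(H)\colon\mathcal{O}\times E\Sigma_*\to\N$, i.e.\ to an algebra over $\mathcal{O}\times E\Sigma_*$. By part (1) of the preceding lemma $\mathcal{O}\times E\Sigma_*$ is again an $E_n$ $\Cat$-operad, so $\phi(H)$ exhibits an $E_n$-algebra structure in $\N$ (this is exactly part (2) of the preceding lemma). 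The object underlying this structure is $\phi(H)(*,*)=H(*)=F(A)$, by the identity $\phi(H)\circ\eta_{\mathcal{O}}=H$ and the definition of $\eta_{\mathcal{O}}$ on objects. Hence $F(A)$ is an $E_n$-algebra in $\N$, as claimed.

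Part (1) is the case $\mathcal{O}=E\text{Comm}=\{*\}$: a commutative monoid in $\M$ is exactly a symmetric $\Cat$-multifunctor $\{*\}\to\M$, and the same argument yields a symmetric algebra over $\{*\}\times E\Sigma_*=E\Sigma_*$, the Barratt-Eccles operad, which is an $E_\infty$ $\Cat$-operad by \Cref{Barrat}. No freeness is needed here because $\{*\}\times E\Sigma_*$ is literally $E\Sigma_*$. Therefore $F$ sends commutative monoids in $\M$ to $E_\infty$-algebras in $\N$.

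I do not expect a genuine obstacle: the content of the statement lives in \Cref{mainprop} and in the preceding lemma (whose proof is a $\Sigma$-equivariant homotopy equivalence on nerves), and \Cref{finalcor} is a matter of assembling them. The one point to get right is the bookkeeping that the $E_n$-structure obtained is honestly a structure on $F(A)$, which amounts to the compatibility of $\phi$ with precomposition along $\eta$ at the level of underlying objects and is immediate from $\phi(H)\circ\eta=H$.
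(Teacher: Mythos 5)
Your proposal is correct and follows essentially the same route as the paper: the paper's (implicit) argument is precisely to view the composite of the algebra with $F$ as a pseudo symmetric $\mathcal{O}$-algebra, rigidify it via \Cref{mainprop} into a symmetric algebra over $\mathcal{O}\times E\Sigma_*$, and invoke the preceding lemma (with $\{*\}\times E\Sigma_*=E\Sigma_*$ handling the commutative case). Your extra bookkeeping that the resulting structure lives on $F(A)$, via $\phi(H)\circ\eta_{\mathcal{O}}=H$, is a welcome explicit check of a point the paper leaves tacit.
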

We conclude our paper by applying our understanding of pseudo symmetric $\Cat$-multifunctors to multifunctorial inverse $K$-theory. In \cite{JY22}, Johnson and Yau define Mandell's inverse $K$-theory multifunctor $\mathcal{P}$ as well as the $\Cat$-multicategories that are its domain ($\Gamma$-categories) and target (permutative categories). Yau proves in \cite{Y23} that $\mathcal{P}$ is pseudo symmetric. We refer the interested reader \cite{Y23} of which the following theorem is one of the main results.
\begin{theorem}\normalfont{\cite{Y23}}
    Mandell's inverse $K$-theory functor is a pseudo symmetric $\Cat$-multifunctor $\mathcal{P}\colon\Gamma\mbox{-}\Cat\to \mathbf{PermCat^{sg}}.$
\end{theorem}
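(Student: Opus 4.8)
The statement is Yau's \cite{Y23}, so the proof is ultimately a reference to that paper; here is the shape I expect it to take. The plan is to start from the description of $\p$ as a $\Cat$-multifunctor $\p\colon\Gamma\mbox{-}\Cat\to\mathbf{PermCat^{sg}}$ given by Johnson and Yau in \cite{JY22}, together with the multicategory structures on source and target. In that description the action of $\p$ on an $n$-ary multimorphism is assembled from a categorical bar-type construction whose indexing data carry an evident ordering, and the comparison with the smash product of $\Gamma$-categories underlying the $n$-ary hom objects of $\Gamma\mbox{-}\Cat$. The first step is to isolate precisely where permuting the $n$ inputs fails to commute strictly with $\p$: this is exactly the reindexing of that construction along the bijection $\sigma$, equivalently the symmetry of the smash product of $\Gamma$-categories. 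Since $\p$ is already a $\Cat$-multifunctor, the unit and composition preservation axioms \Cref{2axmultif} are inherited for free, and it only remains to build the natural isomorphisms $\p_{\sigma,\langle a\rangle,b}$ and check the four pseudo symmetry axioms.

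I would then define each $\p_{\sigma,\langle a\rangle,b}$ to be the canonical reindexing isomorphism induced by $\sigma$ on the construction defining $\p$ on multimorphisms. Naturality \Cref{4.1.9} is automatic because this reindexing isomorphism is natural in the multimorphism. The unit permutation axiom \Cref{pseudounit} holds since $\id_n$ reindexes trivially, and the product permutation axiom \Cref{4.1.10} holds because these reindexing isomorphisms compose strictly: reindexing by $\sigma\tau$ factors as reindexing by $\sigma$ followed by reindexing by $\tau$, which is precisely the pasting identity demanded by \Cref{4.1.10}.

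The substantive work, and the step I expect to be the main obstacle, is verifying the top and bottom equivariance axioms \Cref{4.1.11} and \Cref{4.1.12}. These require tracing how the reindexing isomorphism $\p_\sigma$ interacts with the operadic composition $\gamma$ of $\mathbf{PermCat^{sg}}$, which is itself built from the associativity and symmetry constraints of the smash product together with the permutative structure of the target categories. Concretely, one must show that reindexing a composite $\gamma(f;\langle g_j\rangle)$ by the block permutation $\sigma\langle\id_{k_{\sigma(j)}}\rangle$ is computed by reindexing $f$ by $\sigma$ and leaving the $g_j$ fixed, and dually that reindexing by $\id_n\langle\tau_j\rangle$ fixes $f$ and reindexes each $g_j$ by $\tau_j$, in both cases up to the coherence isomorphisms of the bar construction. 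This is a compatibility between block permutations and the associativity/symmetry coherences, and it is where the bulk of the diagram chases in \cite{Y23} take place; once it is in hand, together with the already-established functoriality of $\p$, the theorem follows.
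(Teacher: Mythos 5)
The paper does not actually prove this statement: it is imported verbatim from Yau \cite{Y23}, and the text surrounding the theorem explicitly refers the reader to that source ("Yau proves in \cite{Y23} that $\mathcal{P}$ is pseudo symmetric"). So there is no in-paper argument to compare your write-up against, and your proposal should be judged as a reconstruction of Yau's proof rather than of anything in this article.

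As such a reconstruction, your outline has the right overall shape --- the $\Cat$-multifunctor structure of $\p$ (hence units and composition, \Cref{2axmultif}) is inherited from \cite{JY22}/Elmendorf, the pseudo symmetry isomorphisms $\p_{\sigma,\la a\ra,b}$ must record exactly the failure of $\p$ to commute strictly with the $\Sigma_n$-action, and the bulk of the verification lies in the top and bottom equivariance axioms \Cref{4.1.11} and \Cref{4.1.12}. But be aware that, as written, it is an outline and not a proof: the two essential ingredients --- the explicit construction of the isomorphisms $\p_\sigma$ out of the permutative and laxity data in Mandell's construction, and the diagram chases establishing naturality \Cref{4.1.9}, the product permutation axiom \Cref{4.1.10}, and the two equivariance axioms --- are precisely the parts you defer to \cite{Y23}. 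In particular, your claim that the product permutation axiom is automatic because "reindexing isomorphisms compose strictly" presupposes that the $\p_\sigma$ are simple relabelling isomorphisms, which is not something you have established; in Yau's treatment these components are built from nontrivial structure morphisms and the axiom requires checking. Since the present paper treats the theorem as a black box, citing \cite{Y23} is an acceptable resolution, but your text should not present the deferred verifications as if they were routine consequences of your setup.
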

As a consequence, $\mathcal{P}$ sends commutative monoids to $E_\infty$ algebras and preserves $E_n$ algebras, as was stated in \Cref{maincor}.
\bibliographystyle{alpha}
\bibliography{repccorresp.bib}
\end{document}